\def\?[#1]{\textbf{[#1]}\marginpar{\Large{\textbf{??}}}}
\renewcommand{\Re}{\mathop{\rm Re}\nolimits}
\renewcommand{\Im}{\mathop{\rm Im}\nolimits}
\DeclareMathOperator{\Op}{Op}
\DeclareMathOperator{\Id}{Id}
\DeclareMathOperator{\ad}{ad}
\DeclareMathOperator{\tr}{tr}
\DeclareMathOperator{\loc}{loc}
\DeclareMathOperator{\vol}{vol}
\DeclareMathOperator{\comp}{comp}
\DeclareMathOperator{\rank}{rank}
\DeclareMathOperator{\Ai}{Ai}
\DeclareMathOperator{\Image}{Image}
\DeclareMathOperator{\diag}{diag}
\DeclareMathOperator{\supp}{supp}
\DeclareMathOperator{\esssupp}{ess supp}
\DeclareMathOperator{\Res}{Res}
\DeclareMathOperator{\WF}{WF}
\newtheorem{prop}{Proposition}
\newtheorem{thm}[prop]{Theorem}
\newtheorem{lem}[prop]{Lemma}
\numberwithin{equation}{section}
\numberwithin{prop}{section}
\begin{document}
\title[Resonances of Convex Obstacles]{Scattering Resonances of Convex Obstacles for general boundary conditions}
\author{Long Jin}
\email{jinlong@math.berkeley.edu}
\address{Department of Mathematics, Evans Hall, University of California,
Berkeley, CA 94720, USA}
\date{}
\maketitle

\begin{abstract}
We study the distribution of resonances for smooth strictly convex obstacles under general boundary conditions. We show that under a pinched curvature condition for the boundary of the obstacle, the resonances are separated into cubic bands and the distribution in each bands satisfies Weyl's law.
\end{abstract}

\section{Introduction and Statement of the Results}
\label{sec:intro}

In this paper, we study the distribution of resonances for convex strictly obstacles $\mathcal{O}$ under general boundary conditions including Neumann and general smooth Robin boundary conditions $\partial_\nu u+\eta u=0$, $\eta\in C^\infty(\partial\mathcal{O})$. The goal of this paper is to prove that if the boundary of the obstacle satisfies a pinched curvature condition, then the resonances that are close to the real axis are separated in several bands. We also give the asymptotics formula for the counting functions of resonances in each bands.

Let $\mathcal{O}\subset\mathbb{R}^n$ be a strictly convex obstacle with smooth boundary. More precisely, let $Q$ be the second fundamental form of $\partial\mathcal{O}$ and $S\partial\mathcal{O}$ be the sphere bundle of $\partial\mathcal{O}$, then $\min_{S\partial\mathcal{O}}Q>0$. We shall write

\begin{equation}
\kappa=2^{-1/3}\cos(\pi/6)\min_{S\partial\mathcal{O}}Q^{2/3},\;\;\;
K=2^{-1/3}\cos(\pi/6)\max_{S\partial\mathcal{O}}Q^{2/3}.
\end{equation}

Let $P=-\Delta_{\mathbb{R}^n\setminus\mathcal{O}}$ be the Laplacian operator on the exterior domain $\mathbb{R}^n\setminus\mathcal{O}$ associated with the Neumann/Robin boundary condition which will be defined precisely later in section \ref{sec:prelim}, then the resolvent $R(\lambda)=(-\Delta-\lambda^2)^{-1}$ which is analytic for $\Im \lambda>0$ has a meromorphic continuation to the whole complex plane $\mathbb{C}$ (when $n$ is odd) or the logarithmic covering of $\mathbb{C}\setminus\{0\}$ (when $n$ is even). The poles of $R(\lambda)$ are called resonances or scattering poles.

In \cite{J}, we proved that there are no resonances in the region
\begin{equation}
C\leqslant\Re\lambda,\;\;\; 0\leqslant-\Im\lambda\leqslant\kappa\zeta_1'(\Re\lambda)^{1/3}-C
\end{equation}
where $\zeta_1'$ is the negative of the first zero of the derivative of the Airy function $\Ai'$ and $C$ is some constant. The main result in this paper is to obtain alternating cubic bands with and without resonances. More precisely, let $0<\zeta_1'<\zeta_2'<\cdots$ be the negative of the zeroes of $\Ai'$, then we have the following theorem.

\begin{thm}
\label{thm:main1}
Suppose we have the following pinched curvature condition
\begin{equation}
\frac{\max_{S\partial\mathcal{O}}Q}{\min_{S\partial\mathcal{O}}Q}
<\left(\frac{\zeta'_{j_0+1}}{\zeta'_{j_0}}\right)^{3/2}
\end{equation}
for some $j_0\geqslant1$. Then there exists a constant $C>0$ such that for all $0\leqslant j\leqslant j_0$, there are no resonances in the regions
\begin{equation}
C\leqslant\Re\lambda,\;\;\; K\zeta_j'(\Re\lambda)^{1/3}+C\leqslant-\Im\lambda
\leqslant\kappa\zeta_{j+1}'(\Re\lambda)^{1/3}-C.
\end{equation}
\end{thm}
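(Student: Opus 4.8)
The plan is to reduce the resonance-free band statement to a semiclassical problem on the boundary, following the parametrix construction already used in \cite{J}, but now tracking the \emph{location} of the quasi-eigenvalues of the relevant boundary operator rather than merely the leading resonance-free strip. Writing $\lambda = h^{-1}(1 + i h z)$ (or a comparable rescaling) with $h = (\Re\lambda)^{-1}$ small and $z$ confined to a bounded set, the exterior Helmholtz problem with Neumann/Robin data is converted, via the outgoing Poisson operator and the Dirichlet-to-Neumann map, into the statement that $\lambda$ is a resonance if and only if a certain semiclassical operator $\mathcal{N}(h,z)$ on $\partial\mathcal{O}$ — built from the boundary condition $\partial_\nu u + \eta u = 0$ composed with the DtN map — is non-invertible. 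The key input is the model computation near the glancing set: in suitable Fermi-type coordinates the normal operator is, to leading order, an Airy-type operator whose spectrum is governed by the zeros of $\Ai'$ (for Neumann/Robin) exactly as in the Dirichlet case the zeros of $\Ai$ appear. Thus the symbol of $\mathcal{N}(h,z)$ vanishes precisely along curves indexed by $j$, of the shape $\Im z \asymp -\kappa_x\,\zeta_j'$ where $\kappa_x$ is the (direction-dependent) curvature factor $2^{-1/3}\cos(\pi/6)Q(x,\xi')^{2/3}$, which ranges over $[\kappa,K]$.

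The steps, in order, would be: (1) recall from section \ref{sec:prelim} and from \cite{J} the microlocal model for $P$ near glancing, and set up the semiclassical rescaling so that resonances in the band $K\zeta_j'(\Re\lambda)^{1/3}+C \le -\Im\lambda \le \kappa\zeta_{j+1}'(\Re\lambda)^{1/3}-C$ correspond to $z$ in a fixed compact region $\Omega_j$ of the lower half-plane; (2) construct the microlocal parametrix for the boundary value problem, reducing invertibility of $P - \lambda^2$ (with the Robin condition) to invertibility of an $h$-pseudodifferential operator on $\partial\mathcal{O}$ whose principal symbol is an explicit combination of Airy functions $\Ai(\cdot)$, $\Ai'(\cdot)$ evaluated at an argument $e^{2\pi i/3}\,\mu(x,\xi',z)$; (3) show, using the pinched curvature hypothesis $\max Q/\min Q < (\zeta'_{j_0+1}/\zeta'_{j_0})^{3/2}$, that for $z\in\Omega_j$ this symbol is bounded away from zero \emph{uniformly} over $S^*\partial\mathcal{O}$ — this is precisely where the inequality is needed, since it guarantees that the band between consecutive zero-curves of the $j$-th and $(j+1)$-th Airy branch is nonempty and disjoint from all the others; (4) promote symbol invertibility to operator invertibility by the standard semiclassical elliptic parametrix / Beals-type argument, possibly after a complex-scaling or exterior-complex-deformation step to handle the non-self-adjointness and justify the meromorphic continuation, concluding that $P-\lambda^2$ is invertible for $\lambda$ in the stated region once $\Re\lambda$ is large enough.

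The main obstacle I anticipate is step (3) together with the error control in step (4): one must show that the curvature-dependent argument of the Airy factor stays in a region where $\Ai(e^{2\pi i/3}\mu)$ (or the appropriate Robin combination $\Ai' + (\text{lower order})\Ai$) does not vanish, and this requires a careful analysis of the zero set of these entire functions in the complex $\mu$-plane, plus a proof that the curvature factor $Q(x,\xi')^{2/3}$ really does interpolate monotonically between $\kappa$ and $K$ in the relevant parameter. The subleading terms in the symbol — coming from the curvature's variation, the subprincipal part of the Laplacian in Fermi coordinates, and the $\eta u$ term in the Robin condition — must be shown to perturb the zero-curves by $O(h^{1/3})$ or better, which is exactly absorbed by the constant $C$ in the theorem; making this perturbation estimate uniform and quantitative, so that the bands do not collapse, is the delicate part. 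A secondary technical point is that the rescaling degenerates away from the glancing region, so one needs the complementary (elliptic and hyperbolic) microlocal regions to be handled separately and patched in, but there the operator is elliptic or propagative and contributes no resonances, so those pieces are comparatively routine given the machinery of \cite{J}.
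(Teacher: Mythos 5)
Your plan is recognizably a cousin of the paper's argument, but it takes a genuinely different reduction (Dirichlet-to-Neumann / Poisson-operator route rather than a Grushin problem for the combined operator) and, as written, has two real gaps that would derail it.

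\textbf{Rescaling.} The rescaling $\lambda = h^{-1}(1+ihz)$ with $z$ confined to a bounded set is the wrong scale. If $\Im z$ is $O(1)$ under that substitution, then $\Im\lambda$ is $O(1)$, whereas the bands in the theorem are cubic: $-\Im\lambda \sim (\Re\lambda)^{1/3} = h^{-1/3}$. The correct setup, which the paper uses, is $\zeta=\lambda^2 = h^{-2}(w+h^{2/3}z)$ with $h=(\Re\lambda)^{-1}$, giving $\Im z \approx 2h^{1/3}\Im\lambda$; then $z$ bounded corresponds to $\Im\lambda = O(h^{-1/3})$, and the Airy operator in the normal variable $t=h^{-2/3}x_n$ emerges exactly at this scale. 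The phrase ``(or a comparable rescaling)'' papers over the fact that the $h^{2/3}$ exponent is forced by the Airy structure and not arbitrary.

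\textbf{Second microlocalization.} The more serious gap is in your step (4). Near the glancing set $\Sigma_w=\{R(x',\xi')=w\}$, the symbol of the effective boundary operator (whether one derives it via the DtN map or via a Grushin reduction) is not in any classical $S^{m,k}_\delta$ class with $\delta<\tfrac12$: it depends on $\lambda = h^{-2/3}(R-w)$, so each transversal derivative loses a factor $h^{-2/3}$. ``Standard semiclassical elliptic parametrix / Beals-type argument'' does not apply there. The paper develops a dedicated second-microlocal calculus $\Psi^{m,k_1,k_2}_{\Sigma_w,2/3}$ with respect to $\Sigma_w$ (Section~\ref{sec:second}, following \cite{SZ6}), and the final invertibility of the effective Hamiltonian $E_{-+}$ in the gap between consecutive bands is obtained not by a straightforward elliptic inversion but by a sharp G\aa rding inequality in that calculus: one shows $\Im G_{-+}^0 \geqslant C_0 h^{1/3}\Id$ near $\Sigma_w$, which is a lower bound by a degenerate (vanishing-in-$h$) quantity, precisely the situation where elliptic parametrix arguments fail but G\aa rding succeeds. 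Without identifying this symbol class and this inequality, step (4) of your plan does not close.

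\textbf{On the choice of reduction.} The DtN route you sketch is a legitimate alternative in principle — it is essentially how the sphere is handled explicitly via Hankel functions — but it is less adapted to the variable-curvature case. The paper's Grushin reduction for the combined operator $(P-z,\gamma)$ (Sections~\ref{sec:prelim}--\ref{sec:global}) produces a matrix-valued effective Hamiltonian $E_{-+}$ whose principal symbol near glancing is, explicitly, $\diag(z-\lambda-e^{-2\pi i/3}\zeta_j'(p))$ plus small corrections; this diagonal form makes the role of the pinched curvature condition transparent (step (iii) of Theorem~\ref{thm:e+-}) and is also what feeds into the Weyl law via a finite-rank approximation. A scalar DtN symbol would be a ratio of Airy functions and would require a separate non-vanishing argument across the whole $\mu$-plane; both can be made to work, but the Grushin version gives a cleaner operator to estimate. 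Two additional points you elide: complex scaling is not an optional afterthought of step (4) — the paper performs it first, and the scaled operator near the boundary is $e^{-2\pi i/3}((hD_{x_n})^2+\cdots)$, which is what produces the rotated Airy operator; and the Robin term is handled by combining the boundary operator $\gamma=h^{2/3}(\gamma_1+k\gamma_0)$ with $P-z$ so that $k\gamma_0$ is manifestly a lower-order perturbation, rather than by analyzing a Robin combination of $\Ai$ and $\Ai'$ directly. Your discussion of the role of the pinch condition in step (3) is qualitatively correct.
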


The Dirichlet case was already established in Sj\"{o}strand-Zworski \cite{SZ6} where a Weyl law for resonances in each band with a rough error term was also given. Their argument can also be directly adapted to our situation to give the following theorem.

\begin{thm}
\label{thm:weyl}
Under the assumption in theorem \ref{thm:main1}, for some $C>0$ and all $1\leqslant j\leqslant j_0$,
\begin{equation}
\label{weyl}
\begin{split}
\sum\{M_\mathcal{O}(\lambda):|\lambda|\leqslant r,&\;
\kappa\zeta_j'(\Re\lambda)^{1/3}-C<-\Im\lambda
<K\zeta_j'(\Re\lambda)^{1/3}+C\}\\
&=(1+o(1))(2\pi)^{1-n}\vol(B^{n-1}(0,1))\vol(\partial\mathcal{O})r^{n-1},
\end{split}
\end{equation}
where $B^{n-1}(0,1)$ is the unit ball in $\mathbb{R}^n$.
\end{thm}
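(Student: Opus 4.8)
\medskip
\noindent\emph{Plan of proof.} The plan is to adapt the strategy of Sj\"ostrand--Zworski \cite{SZ6}: reduce the counting of resonances in a single band to the counting of characteristic values of an effective semiclassical operator on $\partial\mathcal{O}$, identify that count with an eigenvalue count for the boundary Laplacian, and then invoke the sharp Weyl law on a compact manifold. Fix $j$ with $1\leqslant j\leqslant j_0$, set $h=1/\Re\lambda$ and write $\lambda=\mu/h$, so that a resonance in the $j$-th band has $\Re\mu$ near $1$ and $-\Im\mu$ of order $h^{2/3}$. Following \cite{J} and \cite{SZ6}, after complex scaling of the exterior domain (which turns resonances into eigenvalues of a non-self-adjoint operator) and a microlocal reduction to the boundary, I would produce in a fixed conic neighborhood of $\mathbb{R}_+$ an analytic family $\mathcal{E}(\mu,h)$ of $h$-pseudodifferential operators on $L^2(\partial\mathcal{O})$ --- in the Robin case essentially the complex-scaled exterior Dirichlet-to-Neumann map shifted by $\eta$ --- whose characteristic values, counted with the Gohberg--Sigal multiplicities, are precisely the resonances counted with the multiplicities $M_\mathcal{O}$. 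The only change from the Dirichlet case treated in \cite{SZ6} is that the Neumann/Robin boundary condition replaces the Dirichlet Airy model in the conormal variable by the Neumann one, hence produces the zeros $\zeta_j'$ of $\Ai'$ instead of those of $\Ai$, together with a lower-order term carrying $\eta$; neither affects the leading structure.

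Next I would describe $\mathcal{E}(\mu,h)$ near the $j$-th band. On the glancing hypersurface $\{|\xi'|_g=1\}\subset T^*\partial\mathcal{O}$ the operator degenerates through an Airy-type model in the conormal variable, and to leading order $\mathcal{E}(\mu,h)$ fails to be invertible exactly when $h\sqrt{-\Delta_{\partial\mathcal{O}}}$ has an eigenvalue within $O(h^{2/3})$ of $\mu$ shifted by the $j$-th eigenvalue of that normal model, the shift depending on the boundary point and conormal direction only through the second fundamental form $Q$. Since $Q$ ranges over $[\min_{S\partial\mathcal{O}}Q,\max_{S\partial\mathcal{O}}Q]$, the level-$j$ resonances spread over the strip $\kappa\zeta_j'(\Re\lambda)^{1/3}<-\Im\lambda<K\zeta_j'(\Re\lambda)^{1/3}$ with $\kappa,K$ as in the introduction; matching the varying curvature to the correct microlocal region is handled by a symbolic/matrix argument as in \cite{SZ6}.

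I would then carry out the count by the argument principle: the number of resonances in the $j$-th band with $|\lambda|\leqslant r$ equals $\frac{1}{2\pi i}\oint\partial_\mu\log\det\mathcal{E}(\mu,h)\,d\mu$ over a suitable contour, which by Theorem \ref{thm:main1} can be placed in the resonance-free strips surrounding the band. Perturbing $\mathcal{E}(\mu,h)$ to a nearby self-adjoint $h$-pseudodifferential operator --- the non-self-adjoint part and the $\eta$-term being $O(h^{1/3})$, negligible against the $O(1)$ inter-band separation in the rescaled picture --- this integral is shown to equal, up to an error $o(h^{1-n})$, the number of eigenvalues $\mu_k$ of $-\Delta_{\partial\mathcal{O}}$ with $\mu_k\leqslant r^2$ (the $O(r^{2/3})$ shift in the spectral window being absorbed into the error, using that in the band $\Re\lambda=|\lambda|+O(|\lambda|^{-1/3})$, so $h=1/r$ up to negligible corrections). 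The sharp semiclassical Weyl law on the compact manifold $\partial\mathcal{O}$, where the relevant symplectic volume $\vol\{(x',\xi')\in T^*\partial\mathcal{O}:|\xi'|_g\leqslant1\}$ equals $\vol(B^{n-1}(0,1))\vol(\partial\mathcal{O})$, then gives this count as $(1+o(1))(2\pi)^{1-n}\vol(B^{n-1}(0,1))\vol(\partial\mathcal{O})r^{n-1}$, which is \eqref{weyl}.

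The main obstacle will be the uniform control in the first and third steps: making rigorous, uniformly for $|\lambda|\leqslant r$, the identification of resonance multiplicities with characteristic values of $\mathcal{E}(\mu,h)$ and then with eigenvalues of the self-adjoint model, and in particular handling the transition zones at the edges of the band --- where the Airy model degenerates --- and near $|\lambda|=r$, where the counting contour closes. In \cite{SZ6} this is achieved through a Grushin problem with exponential weights together with a perturbation argument; beyond reproducing that machinery, one needs only to check that replacing the Dirichlet by the general Neumann/Robin boundary condition changes nothing at leading order except for the passage from the zeros of $\Ai$ to those of $\Ai'$, and that Theorem \ref{thm:main1} does isolate each band, so that a resonance-free counting contour exists.
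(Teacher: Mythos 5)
Your overall plan — reduce to a boundary operator via a Grushin problem, count zeros of its determinant by the argument principle, and identify the count with the Weyl count on $\partial\mathcal{O}$ — is the right skeleton, and the statement that Neumann/Robin simply replaces the zeros of $\Ai$ by those of $\Ai'$ with a lower-order $\eta$-term is correct and consistent with Sections~\ref{sec:prelim}--\ref{sec:global}. But the central step of your third paragraph has a gap that the paper's proof is specifically designed to avoid.

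You propose to perturb $\mathcal{E}(\mu,h)$ (the paper's $E_{w,-+}$) to a nearby self-adjoint $h$-pseudodifferential operator, claiming ``the non-self-adjoint part and the $\eta$-term being $O(h^{1/3})$, negligible against the $O(1)$ inter-band separation.'' This is not so. In the rescaled variable $z=h^{-2/3}(h^2\lambda^2-w)$, the principal symbol of $E_{w,-+}$ is (by Theorem~\ref{thm:e+-}(iii)) $\diag\bigl(z-\lambda-e^{-2\pi i/3}\zeta_j'(p)\bigr)$, whose anti-self-adjoint part is $\cos(\pi/6)\,\zeta_j'(2Q(p))^{2/3}$. This is $O(1)$, not $O(h^{1/3})$ --- it is precisely what produces the band of width $(K-\kappa)\zeta_j'$ rather than a curve. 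For genuinely non-normal operators, the number of zeros of $\det$ in a region is not controlled by eigenvalue counts of a nearby self-adjoint operator; spectral instability can move zeros by $O(1)$. So the ``perturb-to-self-adjoint'' step does not yield the sharp $o(r^{n-1})$ error; at best it recovers the crude $O(r^{n-1})$ upper bound which the paper records as \eqref{eq:locupper}.

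The paper circumvents this by a different mechanism, which your sketch omits entirely: a finite-rank correction $\tilde{E}_{w,-+}$ that is invertible, a \emph{second} Grushin problem~\eqref{2ndgru} reducing $E_{w,-+}$ to an $M\times M$ matrix $F_{w,-+}$ with $M=O(Lh^{1-n+2/3})$, the local trace formula~\eqref{eq:localtr} on rectangles $W$ of size $O(1)$ in $z$ with the vertical-edge contribution bounded via $\log\det F_{w,-+}=O(M)$ and the lower modulus theorem, and --- crucially --- a family of Gaussian weights $f_\epsilon$ that are large in the interior of $W$ but bounded at the vertical edges, followed by integration in $w$ and a dyadic decomposition in $|\lambda|\leqslant r$. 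These are not optional refinements: they are what allow the counting to be localized to width-$O(h^{2/3})$ windows in the physical energy and still deliver the $o(1)$ error in \eqref{weyl}. Your contour integral over ``a suitable contour'' around the whole band with $|\lambda|\leqslant r$ mixes all scales at once and gives no route to estimating the boundary contributions. If you wish to proceed, you should retain this machinery from \cite{SZ6} (as the paper does) and verify that it is insensitive to the replacement $\Ai\to\Ai'$ and the $\eta$-perturbation, rather than attempt a direct self-adjoint comparison.
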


We should point out that for spherical obstacles (for which $\kappa=K$ and the pinched curvature assumption in Theorem \ref{thm:main1} holds trivially for all $j_0$), the resonances can be described using Hankel functions in the case of Dirichlet, Neumann and constant Robin boundary conditions. Each band,
\begin{equation*}
\kappa\zeta_j'(\Re\lambda)^{1/3}-C<-\Im\lambda
<K\zeta_j'(\Re\lambda)^{1/3}+C,
\end{equation*}
between the resonance-free bands actually reduces to a curve which is asymptotically cubic. Moreover, there is a better error in Weyl's law, $O(r^{n-2})$ instead of $o(r^{n-1})$, in this situation. See Stefanov \cite{St} for a detailed discussion of scattering resonances for the sphere. The results of Sj\"{o}strand-Zworski \cite{SZ6} and of this paper show that more bands are separated from each other when the obstacle is closer to a ball (in the sense that the curvatures of the boundary are closer to a constant.)

The problem of the distribution of resonances for convex obstacles has been extensively studied in the literature. For the spherical case, it dates back to Watson's work on the scattering of electromagnetic wave by the earth \cite{W}. Other notable works include Lax-Phillips \cite{LP1}-\cite{LP3}, Babich-Grigoreva \cite{BG}, Filippov-Zayaev  \cite{FZ}, Morawetz-Ralston-Strauss \cite{MRS}, Melrose \cite{Me1}, Lebeau \cite{Le}, Bardos-Lebeau-Rauch \cite{BLR}, Popov \cite{Po}, Harg\'{e}-Lebeau \cite{HL}, Sj\"{o}strand \cite{S2}, Sj\"{o}strand-Zworski \cite{SZ1}-\cite{SZ6} and Stefanov \cite{St}.
See \cite{Me2}, \cite{Z1} for surveys on this topic and other related settings.

\subsection*{Outline of the proof}
Our strategy is based on a modification of the approach in \cite{SZ6} where the phenomenon that resonances appear in bands was first proved. Our paper is organized as follows.

In Section \ref{sec:prelim}, we reduce the problem to the study of an operator constructed from combining a semiclassical differential operator $P-z$ with a boundary operator $\gamma$. The reason we introduce this combined operator is to avoid the domain issues for different Neumann/Robin boundary conditions and treat them in the same setting. Moreover, in the semiclassical setting, the Robin boundary operator is a perturbation of the Neumann boundary operator. We also follow the long tradition of the complex scaling method in mathematical physics, first introduced in \cite{AC}, \cite{BC}, to deform the self-adjoint operator with continuous spectrum to a non-self-adjoint operator whose discrete spectrum near the real axis coincides with the resonances. In our setting, the complex scaling method has been introduced in \cite{SZ1}, and then in \cite{HL}, \cite{SZ4} and \cite{SZ5}.

In Section \ref{sec:model}-\ref{sec:global}, we set up the Grushin problem for the combined operator and therefore identify the resonances with poles of a meromorphic family of operators on the boundary. The survey \cite{SZ7} gives a good reference for the application of Grushin problem in the study of spectral theory; also see the appendix of \cite{HS}.

In Section \ref{sec:model}, we study the model case near the boundary in which case we have an ordinary differential operator with a Neumann boundary operator in the normal direction. This part is the main novelty of this paper. The complication is due to the presence of the boundary operator which makes the total operator not normal. To deal with this, we need a more careful study of the asymptotics of Airy functions in different directions in the complex plane.

In Section \ref{sec:second}, we continue working near the boundary and study the microlocal structure of the Grushin problem. As in \cite{SZ6}, the suitable symbol class for the operators is given by a second microlocalization with respect to the glancing hypersurface. We shall first review the results in \cite[Section 4]{SZ6} for such symbol classes, then see how the operators we construct fit into these classes.

In Section \ref{sec:global}, we combine the work in Section \ref{sec:model} and Section \ref{sec:second} with the results in \cite[Section 7]{SZ6} for the study of the Laplacian operator away from the boundary to set up the global Grushin problem. The construction of the inverse for this Grushin problem is essentially the same as \cite[Section 8]{SZ6} with modification needed for our operator. This produces an effective Hamiltonian $E_{-+}$ which is a matrix-valued operator on the boundary.

Finally in \ref{sec:resfree} we prove the main theorems using the properties of the operator $E_{-+}$.

\subsection*{Acknowledgement}
I would like to thank Maciej Zworski for the encouragement and advice during the preparation of this paper.
Partial support by the National Science Foundation grant DMS-1201417 is also gratefully acknowledged.

\section{Preliminaries and reduction of the problem}
\label{sec:prelim}
We begin by reviewing the definition of the resonances and its multiplicities. Next we apply the standard complex scaling method to identify the resonances with eigenvalues of a non-self-adjoint operator. Then we further reduce the problem to the study of an operator combining this operator with the corresponding boundary operator.

\subsection{Resonances and their multiplicities}
Let us consider different boundary conditions for the Laplacian operator $-\Delta_{\mathbb{R}^n\setminus\mathcal{O}}$ in the exterior of an obstacle $\mathcal{O}$ in $\mathbb{R}^n$:
\begin{equation*}
u|_{\partial\mathcal{O}}=0 \;\;\; \text{ (Dirichlet) }
\end{equation*}
or
\begin{equation}
\label{boundary:NR}
\partial_{\nu}u+\eta u|_{\partial\mathcal{O}}=0 \;\;\;( \text{Neumann when } \eta=0 \text{ or Robin})
\end{equation}
where $\eta\in C^\infty(\partial\mathcal{O};\mathbb{R})$. For the Dirichlet problem, $-\Delta_{\mathbb{R}^n\setminus\mathcal{O}}$ has the natural domain $H_0^1(\mathbb{R}^n\setminus\mathcal{O})\cap H^2(\mathbb{R}^n\setminus\mathcal{O})$. For the Neumann or Robin problem, $-\Delta_{\mathbb{R}^n\setminus\mathcal{O}}$ has the following domain
\begin{equation}
\label{domain:NR}
\mathcal{D}_\eta(\mathbb{R}^n\setminus\mathcal{O}):=
\{u\in H^2(\mathbb{R}^n\setminus\mathcal{O}):
\partial_{\nu}u+\eta u=0\}.
\end{equation}
In either case, the resonance are defined as the poles of the meromorphic extension of the resolvent
\begin{equation*}
R(\zeta)=(-\Delta_{\mathbb{R}^n\setminus\mathcal{O}}-\zeta^2)^{-1}: L^2_{\comp}(\mathbb{R}^n\setminus\mathcal{O})\to L^2_{\loc}(\mathbb{R}^n\setminus\mathcal{O})
\end{equation*}
from the upper half plane $\Im\zeta>0$ to the whole complex plane if $n$ is odd, the logarithmic covering of $\mathbb{C}\setminus\{0\}$ if $n$ is even. The multiplicity of a resonance $\zeta$ is given by
\begin{equation*}
m_{\mathcal{O}}(\zeta)=\rank\oint_{|z-\zeta|=\epsilon}R(z)2zdz
=\tr\frac{1}{2\pi i}\oint_{|z-\zeta|=\epsilon}R(z)2zdz,
\end{equation*}
where $0<\epsilon\ll1$ so that there are no other resonances on the disk $|z-\zeta|\leqslant\epsilon$.

\subsection{Complex Scaling}
The complex scaling method has a long tradition in mathematical physics. It was first introduced by Aguilar-Combes \cite{AC} and Balslev-Combes \cite{BC} in studying the continuous spectrum of Schr\"{o}dinger operators and later proved to be a strong tool in the study of resonances. Sj\"{o}strand and Zworski build up the theory for the case of scattering by a convex obstacle in a series paper \cite{SZ1}, \cite{SZ4} and \cite{SZ5}. We shall adopt the same approach and notations as in \cite{SZ6} and our previous paper \cite{J}.

Let $\mathcal{O}$ be a convex obstacle in $\mathbb{R}^n$ with smooth boundary. We introduce the following normal geodesic coordinates on the exterior domain $\mathbb{R}^n\setminus\mathcal{O}$:
$$x=(x',x_n)\mapsto x'+x_n\vec{n}(x'),\;\;\; x'\in\partial\mathcal{O},\;\;\; x_n=d(x,\partial\mathcal{O}),$$
where $\nu(x')$ is the exterior unit normal vector to $\mathcal{O}$ at $x'$:
$$\nu(x')\in N_{x'}\partial\mathcal{O},\;\;\; \|\nu(x')\|=1.$$
Then
\begin{equation*}
-\Delta_{\mathbb{R}^n\setminus\mathcal{O}}
=D_{x_n}^2+R(x',D_{x'})-2x_nQ(x_n,x',D_{x'})+G(x_n,x')D_{x_n},
\end{equation*}
where $R(x',D_{x'})$, $Q(x_n,x',D_{x'})$ are second order operators on $\partial\mathcal{O}$:
\begin{equation*}
R(x',D_{x'})=-\Delta_{\partial\mathcal{O}}=(\det(g^{ij}))^{1/2}
\sum_{i,j=1}^{n-1}D_{y_i}(\det(g_{ij}))^{1/2}g^{ij}D_{y_j}
\end{equation*}
is the Laplacian with respect to the induced metric $g=(g_{ij})$ on $\partial\mathcal{O}$ and $Q(x',D_{x'})=Q(0,x',D_{x'})$ is of the form
\begin{equation*}
\det(g^{ij})^{1/2}\sum_{i,j=1}^{n-1}D_{y_j'}
(\det(g_{ij}))^{1/2}a_{ij}D_{y_i'}
\end{equation*}
in any local coordinates such that the principal symbol of $Q$ is the second fundamental form of $\partial\mathcal{O}$ lifted by the duality to $T^\ast\partial\mathcal{O}$:
\begin{equation*}
Q(x',\xi')=\sum_{i,j=1}^{n-1}a_{ij}(x')\xi_i\xi_j.
\end{equation*}
Thus the principal curvatures of $\partial\mathcal{O}$ are the eigenvalues of the quadratic form $Q(x',\xi')$ with respect to the quadratic form $R(x',\xi')$.

Now we consider the complex contour given by
\begin{equation*}
\mathbb{R}^n\setminus\mathcal{O}\ni x\mapsto
z=x+i\theta(x)f'(x)\in
\Gamma\subset\mathbb{R}^n\setminus\mathcal{O}+i\mathbb{R}^n,
\end{equation*}
where $f(x)=\frac{1}{2}d(x,\partial\mathcal{O})^2$. When near the boundary, we scale by the angle $\pi/3$ which is first introduced in \cite{HL}:
\begin{equation*}
\frac{1+i\theta(x)}{|1+i\theta(x)|}=e^{i\pi/3},\;\;\; d(x,\partial\mathcal{O})<C^{-1}
\end{equation*}
and then connect to the scaling with a smaller angle $\theta(x)=\theta_0$ near infinity. Whenever there is no confusion, we shall identify $\Gamma$ with $\mathbb{R}^n\setminus\mathcal{O}$ as above and use the normal geodesic coordinates $(x',x_n)$ as coordinates on $\Gamma$. We define $-\Delta_\Gamma$ as the restriction of the holomorphic Laplacian on $\mathbb{C}^n$
\begin{equation*}
-\Delta_z=\sum_{j=1}^nD_{z_j}^2
\end{equation*}
to $\Gamma$. Therefore we have the following expression near the boundary
\begin{equation*}
-\Delta_\Gamma=e^{-2\pi i/3}((D_{x_n})^2+2x_nQ(x_n,x',D_{x'}))+R(x',D_{x'})+F(x_n,x')D_{x_n}.
\end{equation*}
This shows that $\pi/3$ is the correct scaling angle and we get an Airy-type differential operator in the normal direction.

We can also associate the scaled operator with different boundary conditions on $\partial\Gamma=\partial\mathcal{O}$:
\begin{equation*}
u|_{\partial\mathcal{O}}=0 \;\;\; \text{ (Dirichlet) }
\end{equation*}
or
\begin{equation*}
\partial_{\vec{n}}u+e^{\pi i/3}\eta u|_{\partial\mathcal{O}}=0 \;\;\;( \text{Neumann when } \eta=0 \text{ or Robin}).
\end{equation*}
Now for Dirichlet problem, the scaled operator
$-\Delta_\Gamma$ has the natural domain
$H_0^1(\Gamma)\cap H^2(\Gamma)$ and for Neumann or Robin boundary condition $-\Delta_\Gamma$ has the domain
\begin{equation}
\label{scaled:NR}
\mathcal{D}_\eta(\Gamma):=\{u\in H^2(\Gamma):\partial_\nu u+e^{\pi i/3}\eta u|_{\partial\mathcal{O}}=0\}.
\end{equation}

It was shown in \cite{SZ4} that
\begin{prop}
The spectrum of $-\Delta_\Gamma$ is discrete in $-2\theta_0<\arg z<0$ and the resonances of $-\Delta_{\mathbb{R}^n\setminus\mathcal{O}}$ in the sector $-\theta_0<\arg\zeta<0$ are the same as the square root of the eigenvalues of $-\Delta_\Gamma$ with corresponding boundary condition in $-2\theta_0<\arg z<0$. Moreover, they have the same multiplicities:
\begin{equation*}
m_{\mathcal{O}}(\zeta)=m(z):=\tr\frac{1}{2\pi i}\oint_{|\tilde{z}-z|=\epsilon}
(-\Delta_\Gamma-\tilde{z})^{-1}d\tilde{z}
\end{equation*}
where $z=\zeta^2$, $0<\epsilon\ll1$ so that there are no other eigenvalues of $-\Delta_\Gamma$ in $|\tilde{z}-z|\leqslant\epsilon$.
\end{prop}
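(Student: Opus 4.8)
The plan is to follow the now-standard complex-scaling argument of Sj\"{o}strand--Zworski, adapting the gluing construction so that it accommodates the Neumann/Robin boundary operator rather than the Dirichlet one. There are three things to establish: (i) $-\Delta_\Gamma-z$ is Fredholm of index zero on its domain for $z$ in the sector $-2\theta_0<\arg z<0$, and is invertible for at least one such $z$, so that analytic Fredholm theory gives discreteness of the spectrum; (ii) the poles of $(-\Delta_\Gamma-z)^{-1}$ in that sector, after taking square roots, are exactly the resonances of $-\Delta_{\mathbb{R}^n\setminus\mathcal{O}}$ in $-\theta_0<\arg\zeta<0$; and (iii) the corresponding residue operators have the same rank.

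For (i) I would use that outside a large ball $\Gamma$ coincides with the contour scaled by the fixed small angle $\theta_0$, where the symbol of the restricted holomorphic Laplacian is comparable to $e^{-2i\theta_0}|\xi|^2$ and hence elliptic with values avoiding the sector $-2\theta_0<\arg z<0$; near the boundary $-\Delta_\Gamma=e^{-2\pi i/3}(D_{x_n}^2+2x_nQ)+R(x',D_{x'})+F(x_n,x')D_{x_n}$, whose top-order part has symbol confined to a half-plane disjoint from the relevant sector once $|\xi'|$ is large, and the scaled boundary condition $\partial_\nu u+e^{\pi i/3}\eta u=0$ is a lower-order perturbation of the Neumann condition. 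Interior ellipticity, ellipticity of the boundary value problem, and the good behavior at infinity, patched together with a partition of unity, give the Fredholm property; comparing with the free resolvent for $\Im z$ large (equivalently $\arg\zeta$ near $0$, $|\zeta|$ large) gives invertibility at one point.

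For (ii) and (iii) the heart of the matter is a deformation/gluing identity. Fix $\zeta$ with $\Im\zeta>0$ and $\Re\zeta$ large, so both $R(\zeta)$ and $(-\Delta_\Gamma-\zeta^2)^{-1}$ exist. Since $\Gamma$ agrees with $\mathbb{R}^n\setminus\mathcal{O}$ near $\partial\mathcal{O}$ and any $f\in L^2_{\comp}$ may be viewed as data on $\Gamma$, and since solutions of the Helmholtz equation extend holomorphically to a neighborhood of $\mathbb{R}^n$ in $\mathbb{C}^n$ so that the outgoing solution on $\mathbb{R}^n\setminus\mathcal{O}$ and the $L^2$-solution on $\Gamma$ are restrictions of one holomorphic function, one gets, for cutoffs $\chi$ supported near $\partial\mathcal{O}$, an identity $\chi R(\zeta)\chi=\chi(-\Delta_\Gamma-\zeta^2)^{-1}\chi+A(\zeta)$ with $A(\zeta)$ holomorphic in $-\theta_0<\arg\zeta<0$. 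Meromorphic continuation of both sides then forces the poles to coincide, and the deformation carries generalized eigenfunctions of $-\Delta_\Gamma$ at $\tilde z$ to elements of the range of $\oint_{|z-\zeta|=\epsilon}R(z)2z\,dz$ isomorphically, so $m_{\mathcal{O}}(\zeta)=m(z)$ with $z=\zeta^2$.

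The main obstacle is the boundary term: one must check that $\partial_\nu+e^{\pi i/3}\eta$ defines a Lopatinski--Shapiro-elliptic boundary value problem for the \emph{non-normal} operator $e^{-2\pi i/3}(D_{x_n}^2+2x_nQ)+R+FD_{x_n}$ uniformly in the sector, and that the holomorphic deformation near $\partial\mathcal{O}$ transports the original condition $\partial_\nu u+\eta u=0$ to the scaled one --- this is precisely what fixes the scaling angle $\pi/3$ and the factor $e^{\pi i/3}$ in front of $\eta$. Everything else is a routine adaptation of \cite{SZ4}, since replacing Dirichlet by a lower-order perturbation of the Neumann condition does not affect the Fredholm or gluing mechanics.
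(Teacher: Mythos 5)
The paper does not actually prove this proposition; it simply cites Sj\"{o}strand--Zworski \cite{SZ4} (with \cite{SZ1}, \cite{SZ5} in the background), and your job here is essentially to reconstruct the argument of those references for the Neumann/Robin setting. Your outline of the Fredholm step (interior ellipticity, ellipticity at infinity for the $\theta_0$-scaled operator, Lopatinski--Shapiro ellipticity of the boundary condition) and your observation that the non-self-adjoint boundary operator $\partial_\nu + e^{\pi i/3}\eta$ is the only genuinely new item to verify are both correct.

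There is, however, a real gap in your treatment of the gluing step. You write ``Since $\Gamma$ agrees with $\mathbb{R}^n\setminus\mathcal{O}$ near $\partial\mathcal{O}$\ldots'' and base the identity $\chi R(\zeta)\chi = \chi(-\Delta_\Gamma - \zeta^2)^{-1}\chi + A(\zeta)$ on it. This is false for the contour used here. The deformation is $z = x + i\theta(x)f'(x)$ with $f(x)=\tfrac12 d(x,\partial\mathcal{O})^2$, so $f'(x)=x_n\vec n(x')$ and near the boundary $\theta(x)=\tan(\pi/3)=\sqrt{3}$: the normal coordinate becomes $z_n = (1+i\sqrt{3})x_n = 2e^{i\pi/3}x_n$ for all $0<x_n<C^{-1}$. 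The contour $\Gamma$ meets the real domain only along $\partial\mathcal{O}$ itself (where $x_n=0$); there is no open collar where $\Gamma = \mathbb{R}^n\setminus\mathcal{O}$, so a cutoff $\chi$ supported near $\partial\mathcal{O}$ does not live on a common undeformed region and the identity cannot be asserted directly. To repair this you need the two-stage argument of \cite{SZ1} and \cite{SZ5}: first establish the resonance--eigenvalue correspondence and equality of multiplicities for a contour $\Gamma_0$ that is genuinely undeformed in a neighborhood of $\mathcal{O}$ and scaled by a small angle only outside a large ball --- there the gluing identity \emph{does} hold for $\chi$ supported in the undeformed region; then show, by a continuous deformation $\Gamma_s$ from $\Gamma_0$ to the present contour, that the discrete spectrum and multiplicities of $-\Delta_{\Gamma_s}$ in the sector are constant in $s$ (this uses holomorphy of the family in $s$, the fact that eigenvalues cannot escape through the edges of the sector, and Cauchy's formula for the spectral projectors). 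Only after this interpolation do the eigenvalues of the $\pi/3$-boundary-scaled operator agree with resonances. As stated, your gluing claim would fail already in the Dirichlet case, so this is not a feature of the Neumann/Robin adaptation but a misstatement about the geometry of $\Gamma$.
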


\subsection{Further reductions}
We work in the semiclassical setting and introduce $P(h):=-h^2\Delta_\Gamma$. Near the boundary, we have the expression
\begin{equation}
P(h)=e^{-2\pi i/3}((hD_{x_n})^2+2x_nQ(x_n,x',hD_{x'};h))
+R(x',hD_{x'};h)+hF(x_n,x')hD_{x_n}.
\end{equation}
Also for $w\in W\Subset(0,\infty)$ and $|\Im z|\leqslant C$, $|\Re z|\ll\delta^{-1}$, we let $P-z=h^{-2/3}(P(h)-w)-z$, so near the boundary,
\begin{equation}
\label{op:scale}
\begin{split}
P-z=&\;e^{-2\pi i/3}(D_t^2+2tQ(h^{2/3}t,x',hD_{x'};h))\\
&+h^{-2/3}(R(x',hD_{x'};h)-w)+F(h^{2/3}t,x')h^{2/3}D_t-z,
\end{split}
\end{equation}
where $t=h^{-2/3}x_n$.

There are certain difficulty in working with Robin boundary conditions with the domain \eqref{domain:NR} or more precisely with the scaled boundary condition \eqref{scaled:NR}. In normal geodesic coordinates introduced above, the domain will change as the function $\eta$ changes and this causes the difficulty in the formulation of the model problem later.

To avoid this issue, notice that in the $t$-coordinates, the condition \eqref{scaled:NR} can be rewritten as
\begin{equation*}
\partial_tu+h^{2/3}ku|_{t=0}=0,
\end{equation*}
where $k=e^{\pi i/3}\eta$. Roughly speaking, the principal term corresponds to the Neumann boundary condition. This motivates us to consider the Robin boundary problem with general $\eta\in C^\infty(\partial\mathcal{O})$ as a perturbation of the Neumann boundary problem. To achieve this, we shall combine our differential operator $P-z$ with the boundary operator and consider
\begin{equation}
\label{op:comb}
\left(
  \begin{array}{c}
    P-z \\
    \gamma \\
  \end{array}
\right): H^2(\mathbb{R}^n\setminus\mathcal{O})\to L^2(\mathbb{R}^n\setminus\mathcal{O})\times H^l(\partial\mathcal{O})
\end{equation}
where for Dirichlet problem, $l=\frac{3}{2}$,
\begin{equation*}
\gamma=\gamma_0:H^2(\mathbb{R}^n\setminus\mathcal{O})\to H^{3/2}(\partial\mathcal{O}),\;\;\; u\mapsto u|_{\partial\mathcal{O}};
\end{equation*}
and for Neumann or Robin problem $(k=e^{\pi i/3}\gamma)$ that we shall focus on, $l=\frac{1}{2}$,
\begin{equation}
\label{op:NR}
\gamma=h^{2/3}(\gamma_1+k\gamma_0):
H^2(\mathbb{R}^n\setminus\mathcal{O})\to H^{1/2}(\partial\mathcal{O}),\;\;\; u\mapsto h^{2/3}(\partial_\nu u+ku)|_{\partial\mathcal{O}}.
\end{equation}
In the coordinates $(t,x')$, we have $\gamma(u)=u(0,\cdot)$ (Dirichlet) or
\begin{equation*}
\gamma(u)=(\partial_tu+h^{2/3}ku)(0,\cdot)\;\; \text{ (Neumann or Robin).}
\end{equation*}
Therefore from now on we shall think of $P-z$ as the first component of the combined operator \eqref{op:comb}, i.e. the differential operator from $H^2(\mathbb{R}^n\setminus\mathcal{O})$ to $L^2(\mathbb{R}^n\setminus\mathcal{O})$ instead of an operator with a smaller domain \eqref{scaled:NR}. Moreover, to avoid confusion, we shall write $R_P(z)$ to be the resolvent of $P$ with domain \eqref{scaled:NR}, or in other words, $R_P(z)$ is a right inverse of $P-z:H^2\to L^2$ satisfying $\gamma R_P(z)=0$. We wish to use our new operator \eqref{op:comb} to give an equivalent description of resonances instead of
\begin{equation}
\label{res:scale}
m(h^{-2}(w+h^{2/3}z))=\tr\frac{1}{2\pi i}\oint_{|\tilde{z}-z|=\epsilon}
R_P(\tilde{z})d\tilde{z},\;\;\; 0<\epsilon\ll1.
\end{equation}

\begin{prop}
The eigenvalues of $P$ are exactly the poles of
\begin{equation}
\label{op:invcomb}
\left(
  \begin{array}{c}
    P-z \\
    \gamma \\
  \end{array}
\right)^{-1}: L^2(\mathbb{R}^n\setminus\mathcal{O})\times H^l(\partial\mathcal{O})\to H^2(\mathbb{R}^n\setminus\mathcal{O})
\end{equation}
as a meromorphic operator-valued function in $z$. Moreover, they have the same multiplicity:
\begin{equation}
\label{res:comb}
m(h^{-2}(w+h^{2/3}z))=\tr-\frac{1}{2\pi i}\oint_{|\tilde{z}-z|=\epsilon}
\left(
  \begin{array}{c}
    P-\tilde{z} \\
    \gamma \\
  \end{array}
\right)^{-1}
\frac{d}{d\tilde{z}}\left(
  \begin{array}{c}
    P-\tilde{z} \\
    \gamma \\
  \end{array}
\right)d\tilde{z},
\end{equation}
where $0<\epsilon\ll1$ is chosen in a way that there are no other poles for the operator \eqref{op:invcomb} in $|\tilde{z}-z|<\epsilon$.
\end{prop}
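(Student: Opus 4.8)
The plan is to show that the combined operator $\left(\begin{smallmatrix}P-z\\\gamma\end{smallmatrix}\right)$ is invertible precisely away from the eigenvalues of $P$ (with domain \eqref{scaled:NR}), and that its inverse, written out explicitly, has residues whose traces match \eqref{res:scale}. First I would construct a candidate inverse. Given $(f,g)\in L^2\times H^l$, I want $u\in H^2$ with $(P-z)u=f$ and $\gamma u=g$. Split $u=v+w$ where $v=R_P(z)f$ solves $(P-z)v=f$ with $\gamma v=0$, so it remains to solve $(P-z)w=0$, $\gamma w=g$. This is an elliptic boundary-value problem for the "Airy-type" operator: define the Poisson-type operator $\mathscr{K}(z):H^l(\partial\mathcal{O})\to H^2(\mathbb{R}^n\setminus\mathcal{O})$ sending $g$ to the solution of $(P-z)w=0$, $\gamma w=g$. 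The point is that $\mathscr{K}(z)$ exists and is meromorphic in $z$ with poles exactly at the eigenvalues of $P$: indeed solvability of the homogeneous interior equation with prescribed (Dirichlet or Neumann/Robin) trace fails exactly when $z$ is such an eigenvalue, and one gets $\mathscr{K}(z)$ from a parametrix plus the resolvent correction $\mathscr{K}(z)=\mathscr{K}_0(z)-R_P(z)(P-z)\mathscr{K}_0(z)$ for any right parametrix $\mathscr{K}_0(z)$ of $\left(\begin{smallmatrix}P-z\\\gamma\end{smallmatrix}\right)$. Then
\begin{equation*}
\left(\begin{array}{c}P-z\\\gamma\end{array}\right)^{-1}=\left(R_P(z)\quad \mathscr{K}(z)\right),
\end{equation*}
and since $R_P(z)$ is already known to be meromorphic with poles at the eigenvalues of $P$, so is the whole inverse.

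Next I would check that every pole of the inverse is genuinely an eigenvalue and conversely. If $z_0$ is not an eigenvalue of $P$, then $R_P(z_0)$ is holomorphic there, and the interior problem $(P-z_0)w=0$, $\gamma w=g$ has a unique solution, so $\mathscr{K}(z_0)$ is holomorphic — hence the inverse is holomorphic at $z_0$. Conversely, if $z_0$ is an eigenvalue, then $R_P(z)$ has a pole (since the boundary-value problem has nontrivial kernel), which forces a pole of the inverse; equivalently, $\left(\begin{smallmatrix}P-z_0\\\gamma\end{smallmatrix}\right)$ has nontrivial kernel (an eigenfunction), so it cannot be invertible there. A small subtlety: one must rule out the possibility that a pole of $R_P$ is cancelled by $\mathscr{K}$ in a way that keeps the full inverse holomorphic; but this cannot happen because the kernel of the combined operator is nonzero exactly at eigenvalues, and an invertible operator has trivial kernel.

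For the multiplicity identity \eqref{res:comb}, the strategy is to compute the logarithmic-derivative residue. Write $A(z)=\left(\begin{smallmatrix}P-z\\\gamma\end{smallmatrix}\right)$, so $\frac{d}{dz}A(z)=\left(\begin{smallmatrix}-I\\0\end{smallmatrix}\right)$ and
\begin{equation*}
-\frac{1}{2\pi i}\oint_{|z-z_0|=\epsilon}A(z)^{-1}\frac{dA}{dz}(z)\,dz
=\frac{1}{2\pi i}\oint_{|z-z_0|=\epsilon}\left(R_P(z)\quad\mathscr{K}(z)\right)\left(\begin{array}{c}I\\0\end{array}\right)dz
=\frac{1}{2\pi i}\oint_{|z-z_0|=\epsilon}R_P(z)\,dz,
\end{equation*}
whose trace is exactly $m(h^{-2}(w+h^{2/3}z_0))$ by \eqref{res:scale}. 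This is the cleanest route: the second block column $\mathscr{K}(z)$ never enters because $dA/dz$ annihilates the boundary component. One should note the trace is well-defined since the contour integral of $R_P$ is finite rank (it is the spectral projector times a polynomial factor), which is standard.

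The main obstacle I anticipate is establishing that $\mathscr{K}(z)$ — equivalently a right inverse of $A(z)$ — exists and is meromorphic with no \emph{extra} poles beyond the eigenvalues of $P$, i.e. the claim that $A(z)$ is a holomorphic family of Fredholm operators of index zero whose invertibility is equivalent to $z\notin\mathrm{Spec}(P)$. This requires knowing that $A(z):H^2\to L^2\times H^l$ is Fredholm for all $z$ (elliptic boundary value problem, via a parametrix construction using the factorization of the Airy-type normal operator together with ellipticity of $R-w$ in the tangential variables away from the glancing region — or more simply, that $A(z_0)$ is invertible for at least one $z_0$, e.g. with $\Im z$ large, which gives Fredholmness of the whole family by stability under the relatively compact perturbation $-zI$ in the first slot). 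Once one $z_0$ with $A(z_0)$ invertible is produced, analytic Fredholm theory yields meromorphy and pins down the poles; identifying those poles with eigenvalues of $P$ is then the kernel computation above. I would handle the "one invertible point" via the already-cited $R_P(z)$: for $\Im z\gg1$, $R_P(z)$ exists, and combined with the Poisson operator for the same $z$ (constructed by a convergent Neumann-series parametrix argument in that regime), $A(z)$ is invertible there.
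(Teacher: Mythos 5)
Your argument follows essentially the same route as the paper: write the inverse as $\bigl(R_P(z),\ \mathscr{K}(z)\bigr)$ and observe that since $\frac{d}{dz}A(z)=\left(\begin{smallmatrix}-I\\0\end{smallmatrix}\right)$, the logarithmic-derivative residue collapses to $\oint R_P(z)\,dz$ and the multiplicity identity drops out of \eqref{res:scale}. That residue calculation is exactly the paper's.

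Where you differ is in setting up $\mathscr{K}(z)$. You treat it as a Poisson operator, obtain it from a ``right parametrix $\mathscr{K}_0(z)$ of $A(z)$,'' and then worry (in your last paragraph) about Fredholmness, analytic continuation, and ruling out pole cancellation. The paper avoids all of this by fixing, once and for all, a \emph{$z$-independent} operator $K$ that is merely a right inverse of the boundary map $\gamma$ (e.g.\ an extension operator; no equation needs to be solved), and then checking by direct substitution that
\begin{equation*}
\left(\begin{array}{c}P-z\\\gamma\end{array}\right)^{-1}=\bigl(R_P(z),\ K-R_P(z)(P-z)K\bigr).
\end{equation*}
Because $K$ is fixed and holomorphic (constant), this formula makes manifest that the poles of the inverse are precisely those of $R_P(z)$; the analytic-Fredholm argument and the ``cancellation'' worry become superfluous. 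One small imprecision on your side: for your identity $\mathscr{K}(z)=\mathscr{K}_0-R_P(z)(P-z)\mathscr{K}_0$ to hold exactly (not just modulo smoothing), $\mathscr{K}_0$ must satisfy $\gamma\mathscr{K}_0=I$ \emph{exactly} --- i.e.\ be a right inverse of $\gamma$, not a right parametrix of the full operator $A(z)$. Once phrased this way, you recover the paper's $K$ and your proof coincides with theirs.
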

\begin{proof}
Let $K$ be a right inverse of $\gamma$:
\begin{equation}
\label{op:invtrace}
K:L^2(\partial\mathcal{O})\to H^2(\mathbb{R}^n\setminus\partial\mathcal{O}),\;\;\; \gamma Kg=g,\;\;\; \forall g\in H^l(\partial\mathcal{O}).
\end{equation}
One possible choice is the so-called Poisson operator, but any choice will be good for us. Then we have
\begin{equation}
\label{rel:comb}
\left(
  \begin{array}{c}
    P-z \\
    \gamma \\
  \end{array}
\right)^{-1}=(R_P(z),K-R_P(z)(P-z)K),
\end{equation}
In fact, for any $(v,g)\in L^2(\mathbb{R}^n\setminus\mathcal{O})\times H^l(\partial\mathcal{O})$, let
$$u=R_P(z)v+(K-R_P(z)(P-z)K)g,$$
by the construction of $K$, \eqref{op:invtrace}, and the fact that $\gamma R_P(z)=0$,
\begin{equation*}
(P-z)u=v+(P-z)Kg-(P-z)Kg=v,\;\;\; \gamma u=\gamma Kg=g.
\end{equation*}
Therefore \eqref{rel:comb} gives
\begin{equation*}
\left(
  \begin{array}{c}
    P-z \\
    \gamma \\
  \end{array}
\right)^{-1}
\frac{d}{dz}\left(
  \begin{array}{c}
    P-z \\
    \gamma \\
  \end{array}
\right)=(R_P(z),K-R_P(z)(P-z)K)
\left(
  \begin{array}{c}
    -1 \\
    0 \\
  \end{array}
\right)
=-R_P(z).
\end{equation*}
Now \eqref{res:comb} and the proposition follows directly from \eqref{res:scale}.
\end{proof}

In this paper, we shall work with the Neumann/Robin boundary \eqref{boundary:NR} condition. The techniques here can certainly be applied to Dirichlet boundary condition. However, in the Dirichlet case, since the domain is already simple enough, we do not need this reduction and a direct approach without the boundary operator is given in \cite{SZ6}.

\subsection{A simple model}
We conclude this section by presenting a simple model motivating our approach to boundary value problems using a Grushin reduction for an operator combining a differential operator and a boundary operator.

We consider the differential operator $P=-\frac{d^2}{dx^2}$ with Neumann boundary condition on the interval $[0,\pi]$. The spectrum of the operator is discrete: $\sigma(P)=\{\lambda_k=k^2: k=0,1,2,\ldots\}$
and each eigenspace is one-dimensional:
\begin{equation*}
E_k=\{f\in H^2[0,1]|f'(0)=f'(1)=0, -f''=\lambda_kf\}=\mathbb{C}\cos kx.
\end{equation*}

We set up a Grushin problem to capture the first $m$ eigenvalues using a finite matrix. For simplicity, let us consider the case $m=1$
so that the first eigenvalue is $\lambda_0=0$ with unit eigenvector $e_0=\frac{1}{\pi}$. Put
\begin{equation*}
\left(
  \begin{array}{cc}
    P-z & R_- \\
    R_+ & 0 \\
  \end{array}
\right):\mathcal{D}\times\mathbb{C}\to L^2[0,\pi]\times\mathbb{C},
\end{equation*}
where
\begin{equation*}
\mathcal{D}=\{u\in H^2[0,\pi]:u'(0)=u'(\pi)=0\}
\end{equation*}
and
\begin{equation*}
Pu=-u'',\;\;\;R_+u=\langle u,e_0\rangle=\frac{1}{\pi}\int_0^\pi udx,
\;\;\;R_-u_-=u_-e_0=\frac{u_-}{\pi}.
\end{equation*}
Then
\begin{equation*}
\left(
  \begin{array}{cc}
    P-z & R_- \\
    R_+ & 0 \\
  \end{array}
\right)
\left(
  \begin{array}{c}
    u \\
    u_- \\
  \end{array}
\right)=
\left(
  \begin{array}{c}
    v \\
    v_+ \\
  \end{array}
\right)
\end{equation*}
is equivalent to
\begin{equation*}
-u''-zu+\frac{u_-}{\pi}=v,\;\;\;\frac{1}{\pi}\int_0^\pi udx=v_+.
\end{equation*}
We can integrate the first equation on $[0,\pi]$ to get
\begin{equation*}
-(u'(\pi)-u'(0))-z\int_0^\pi udx+u_-=\int_0^\pi vdx
\end{equation*}
and thus
\begin{equation*}
u_-=(u'(\pi)-u'(0))+z\int_0^\pi udx+\int_0^\pi vdx=\pi zv_++\int_0^\pi vdx.
\end{equation*}
It is then not difficult to see that for $z<1$, we can use this $u_-$ to solve $u$ uniquely. Therefore the Grushin problem is well-posed with inverse
\begin{equation*}
\left(
  \begin{array}{cc}
    E & E_+ \\
    E_- & E_{-+} \\
  \end{array}
\right):L^2[0,\pi]\times\mathbb{C}\to\mathcal{D}\times\mathbb{C},
\end{equation*}
which has an explicit expression and we have seen that $E_{-+}=\pi z$ which is invertible if and only if $z\neq\lambda_0=0$.

The situation is somewhat similar to our case of obstacle scattering if we regard the left end point $x=0$ as the boundary, and the right end point $x=\pi$ as infinity. Recall that in the case of obstacle scattering, since the outgoing condition becomes $L^2$-condition after complex scaling, we get a ``boundary condition'' at infinity. Now, we consider another Grushin problem for $-\frac{d^2}{dx^2}$, or rather the following operator
\begin{equation*}
\left(
  \begin{array}{c}
    -\frac{d^2}{dx^2}-z \\
    \gamma_1 \\
  \end{array}
\right):\mathcal{D}'=\{u\in H^2[0,1]|u'(\pi)=0\}\to L^2[0,1]\times\mathbb{C}
\end{equation*}
where $\gamma_1u=u'(0)$. We use the same $R_+$ and $R_-$ as above to construct the Grushin problem
\begin{equation*}
\left(
  \begin{array}{cc}
    -\frac{d^2}{dx^2}-z & R_- \\
    \gamma_1 & 0\\
    R_+ & 0 \\
  \end{array}
\right):\mathcal{D}'\times\mathbb{C}\to L^2[0,\pi]\times\mathbb{C}\times\mathbb{C}.
\end{equation*}
Now
\begin{equation*}
\left(
  \begin{array}{cc}
    -\frac{d^2}{dx^2}-z & R_- \\
    \gamma_1 & 0\\
    R_+ & 0 \\
  \end{array}
\right)
\left(
  \begin{array}{c}
    u \\
    u_- \\
  \end{array}
\right)=
\left(
  \begin{array}{c}
    v \\
    v_0 \\
    v_+ \\
  \end{array}
\right)
\end{equation*}
is equivalent to
\begin{equation*}
-u''-zu+\frac{u_-}{\pi}=v,\;\;\;u'(0)=v_0,\;\;\;\frac{1}{\pi}\int_0^\pi udx=v_+.
\end{equation*}
Again, integrating the first equation gives
\begin{equation*}
-(u'(\pi)-u'(0))-z\int_0^\pi udx+u_-=\int_0^\pi vdx
\end{equation*}
and thus
\begin{equation*}
u_-=(u'(\pi)-u'(0))+z\int_0^\pi udx+\int_0^\pi vdx=-v_0+\pi zv_++\int_0^\pi vdx.
\end{equation*}
Again, using this $u_-$, it is not difficult to solve $u$ uniquely for $z<1$. Hence this Grushin problem is also well-posed with inverse
\begin{equation*}
\left(
  \begin{array}{ccc}
    E & K & E_+ \\
    E_- & K_- & E_{-+} \\
  \end{array}
\right):L^2[0,\pi]\times\mathbb{C}\times\mathbb{C}
\to\mathcal{D}'\times\mathbb{C},
\end{equation*}
which again has an explicit expression. We find that $E_{-+}=\pi z$ coincides with $E_{-+}$ we found in the previous Grushin problem.

Of course in this trivial example we can compute everything explicitly without Grushin reduction. The importance of the Grushin problem is that we can perturb the operator and the invertibility of the perturbed operator is captured by the finite matrix $E_{-+}$ (in our case it is a $1\times1$ matrix, i.e. a scalar.) This reduces the infinite-dimensional problem to a finite-dimensional one. The second Grushin problem also allows us to perturb the boundary condition at $0$ which turns out to be crucial in our setting.

\section{Model Grushin problems}
\label{sec:model}

In this section, we shall study the model problem for ordinary differential operators by setting up a suitable Grushin problem.
Recall that we have the combined operator \eqref{op:comb}
\begin{equation*}
\left(
  \begin{array}{c}
    P-z \\
    \gamma \\
  \end{array}
\right): H^2(\mathbb{R}^n\setminus\mathcal{O})\to L^2(\mathbb{R}^n\setminus\mathcal{O})\times H^{1/2}(\partial\mathcal{O}),
\end{equation*}
where $P-z$ is given by
\begin{equation*}
P-z=h^{-2/3}(-h^2\Delta_\Gamma-w)-z:
H^2(\mathbb{R}^n\setminus\mathcal{O})\to L^2(\mathbb{R}^n\setminus\mathcal{O})
\end{equation*}
and $\gamma$ is given by
\begin{equation*}
\gamma=h^{2/3}(\gamma_1+k\gamma_0):
H^2(\mathbb{R}^n\setminus\mathcal{O})\to L^2(\partial\mathcal{O}),\;\;\; u\mapsto h^{2/3}(\partial_\nu u+ku)|_{\partial\mathcal{O}}.
\end{equation*}
In local coordinates $(t=h^{-2/3}x_n,x')$ near the boundary introduced in Section \ref{sec:prelim}, we have
\begin{equation*}
\begin{split}
P-z=&\;e^{-2\pi i/3}(D_t^2+2tQ(h^{2/3}t,x',hD_{x'};h))\\
&+h^{-2/3}(R(x',hD_{x'};h)-w)+F(h^{2/3}t,x')h^{2/3}D_t-z,
\end{split}
\end{equation*}
and
\begin{equation*}
\gamma(u)=\gamma_1(u)+h^{2/3}k\gamma_0(u)
=(\partial_tu+h^{2/3}ku)(0,\cdot).
\end{equation*}

Therefore we start by ignoring the lower order terms and considering a model operator
\begin{equation}
\label{model:airy}
P_\lambda-z=e^{-2\pi i/3}(D_t^2+\mu t)+\lambda-z
\end{equation}
with $\gamma_1:u\mapsto u'(0)$, where $\lambda\in\mathbb{R}$, $C^{-1}\leqslant\mu\leqslant C$ and $|\Im z|<C_1$ with
$C_1$ large but fixed. Here we regard $\lambda$ as $h^{-2/3}(R(x',hD_{x'})-w)$, and $\mu$ as $Q(0,x',hD_{x'})$. Other terms will be small perturbation.

The model above is only necessary for handling the region near the glancing hypersurface $\Sigma_w=\{R(x',\xi')=w\}$. In the situation that $|\lambda|\gg1+|\Re z|$, i.e. away from the glancing region, since $Q$ is bounded by $R$, we can also treat the term $e^{-2\pi i/3}\mu t$ as a perturbation and instead consider the model operator
\begin{equation}
\label{model:easy}
P_\lambda^\#-z=e^{-2\pi i/3}D_t^2+\lambda-z
\end{equation}
with the same $\gamma_1$ and $\lambda\in\mathbb{R}, |\Im z|<C_1$. Here we note that \eqref{model:easy} is elliptic as $|\lambda-\Re z|\gg1$ and thus this model is easier to work with.

In this section, we shall first review some properties of Airy function and estimates of Airy operators and boundary operators. Next we solve the Grushin problem for the model Airy operators in the case $\mu=1$. Then we treat the easier model operator \eqref{model:easy} in the same way. Finally we shall show how the additional parameter $\mu$ affects our construction and that all the estimates are uniform for $\mu$ in a compact subset of $(0,\infty)$.

\subsection{Asymptotics and zeroes of Airy functions}
Recall that the Airy function $\Ai$ can be defined by the formula
\begin{equation}
\label{fn:Airy}
\Ai(t)=\frac{1}{2\pi}\int_{\Im\sigma=\delta>0}e^{i(\sigma^3/3)+i\sigma t}d\sigma
\end{equation}
in the real domain and it is in fact an entire function for $t\in\mathbb{C}$ with different asymptotic behaviors in different directions. For example, in the positive real direction,
\begin{equation}
\label{asy:Airy-pos}
\begin{split}
\Ai(t)=&\;(2\sqrt{\pi})^{-1}t^{-1/4}
e^{-\frac{2}{3}t^{3/2}}(1+O(t^{-3/2})),\\
\Ai'(t)=&\;-(2\sqrt{\pi})^{-1}t^{1/4}
e^{-\frac{2}{3}t^{3/2}}(1+O(t^{-3/2})),\\
\end{split}
\end{equation}
as $t\to\infty$; while in the negative real direction,
\begin{equation}
\label{asy:Airy-neg}
\begin{split}
\Ai(-t)=&\;\pi^{-1/2}t^{-1/4}
\left(\sin(\frac{2}{3}t^{3/2}+\frac{\pi}{4})+O(t^{-3/2})\right),\\
\Ai'(-t)=&\;-\pi^{-1/2}t^{1/4}
\left(\cos(\frac{2}{3}t^{3/2}+\frac{\pi}{4})+O(t^{-3/2})\right),\\
\end{split}
\end{equation}
as $t\to\infty$. Moreover, \eqref{asy:Airy-pos} holds away from the negative real axis:
\begin{equation}
\label{asy:Airy-com}
\begin{split}
\Ai(z)=&\;(2\sqrt{\pi})^{-1}
e^{-\zeta}z^{-1/4}(1+O(|\zeta|^{-1})),\\
\Ai'(z)=&\;-(2\sqrt{\pi})^{-1}
e^{-\zeta}z^{1/4}(1+O(|\zeta|^{-1})),\\
\end{split}
\end{equation}
uniformly for $0\leqslant|\arg z|\leqslant\pi-\delta$, where $\delta>0$ is fixed. Here $\zeta=\frac{2}{3}z^{3/2}$ and we choose the branch such that if $z$ is real and positive, then so is $\zeta$.

Let $0<\zeta_1<\zeta_2<\cdots$ and $0<\zeta_1'<\zeta_2'<\cdots$ be the negatives of the zeroes of $\Ai$ and $\Ai'$, respectively. All of these zeroes are simple and we have $\zeta_j'<\zeta_j<\zeta_{j+1}'$. The distances between the zeroes get closer: $\zeta_{j+1}-\zeta_j\searrow0$ and $\zeta_{j+1}'-\zeta_j'\searrow0$ as $j\to\infty$. This can be proved by Sturm's comparison theorem.

The Airy function $\Ai$ solves the simple differential equation
$(D_t^2+t)\Ai(t)=0, t\in\mathbb{R}$.
Therefore all the eigenfunctions and eigenvalues for the Dirichlet and Neumann realization of the Airy operator $D_t^2+t$ on $[0,\infty)$ are given by translations of the Airy function:
\begin{equation*}
(D_t^2+t)\Ai(t-\zeta_j)=\zeta_j\Ai(t-\zeta_j),\;\;\;
(D_t^2+t)\Ai(t-\zeta_j')=\zeta_j'\Ai(t-\zeta_j').
\end{equation*}
Since we are only working with Neumann boundary condition, let us write $e_j(t)=c_j\Ai(t-\zeta_j')$ to be the normalized eigenfunctions of the Neumann realization of $D_t^2+t$ on $(0,\infty)$. Then $\{e_j\}_{j=1}^\infty$ forms an orthonormal basis for $L^2(0,\infty)$.

\subsection{Some basic estimates}
In this part, we give some elementary estimates on Airy operators and the boundary operators, some of these estimates can be found in \cite{J}.

Consider the Airy operator
$D_t^2+t:B\subset L^2\to L^2$
and the boundary operators
\begin{equation*}
\gamma_0:B\to\mathbb{C},\;\; u\mapsto u(0),\;\;\;
\gamma_1:B\to\mathbb{C},\;\; u\mapsto u'(0).
\end{equation*}
Here $L^2=L^2(0,\infty)$ and $B=\{u\in L^2:D_t^2u, tu\in L^2\}$ is a Banach space equipped with the norm
\begin{equation}
\label{norm:b}
\|u\|_B=\|D_t^2u\|+\|tu\|+\|u\|,
\end{equation}
where we use $\|\cdot\|$ to represent the standard $L^2$-norm on $(0,\infty)$.

It is clear that $\|(D_t^2+t)u\|\leqslant C\|u\|_B$. More precisely, we have the following identity,
\begin{equation}
\label{id:airy}
\|(D_t^2+t)u\|^2=\|D_t^2u\|^2+\|tu\|^2
+2\|\sqrt{t}D_tu\|^2-|\gamma_0u|^2,
\end{equation}
for any $u\in C_0^\infty([0,\infty))$. The proof is bases on a simple integration by parts. To see this, let $\langle,\rangle$ be the standard $L^2$ inner product on $(0,\infty)$. Then
\begin{equation*}
\begin{split}
\|(D_t^2+t)u\|^2=&\;\|D_t^2u\|^2+\|tu\|^2+2\Re\langle D_t^2u, tu\rangle\\
=&\;\|D_t^2u\|^2+\|tu\|^2+2\Re\langle D_tu,D_t(tu)\rangle\\
=&\;\|D_t^2u\|^2+\|tu\|^2+2\Re\langle D_tu,tD_tu\rangle
+2\Re\frac{1}{i}\langle D_tu,u\rangle\\
=&\;\|D_t^2u\|^2+\|tu\|^2
+2\|\sqrt{t}D_tu\|^2-|\gamma_0u|^2.
\end{split}
\end{equation*}
Here in the last step, we use again the integration by parts
\begin{equation}
\label{id:gamma0}
\langle D_tu,u\rangle=\langle u,D_tu\rangle-i|u(0)|^2
\end{equation}
to get
\begin{equation*}
\Re\frac{1}{i}\langle D_tu,u\rangle=\Im\langle D_tu,u\rangle=-\frac{i}{2}|u(0)|^2.
\end{equation*}

Next we give some estimates of $\gamma_0$ and $\gamma_1$. For any $u\in C^\infty_0([0,\infty))$, by the Cauchy-Schwartz inequality and \eqref{id:gamma0}, we get
\begin{equation*}
|\gamma_0u|^2\leqslant2\|D_tu\|\|u\|,
\end{equation*}
and similarly
\begin{equation*}
|\gamma_1u|^2\leqslant2\|D_t^2u\|\|D_tu\|.
\end{equation*}
Another application of integration by parts and the Cauchy-Schwartz inequality also gives
\begin{equation*}
\begin{split}
\|D_tu\|^2=&\;\langle D_t^2u,u\rangle-u(0)u'(0)\\
\leqslant&\;|\gamma_1u||\gamma_0u|+\|D_t^2u\|\|u\|\\
\leqslant&\;2\|D_t^2u\|^{1/2}\|u\|^{1/2}\|D_tu\|+\|D_t^2u\|\|u\|
\end{split}
\end{equation*}
which leads to the standard interpolation estimates
\begin{equation}
\label{es:interpolation}
\|D_tu\|\leqslant(\sqrt{2}+1)\|D_t^2u\|^{1/2}\|u\|^{1/2}.
\end{equation}
As a consequence, for any $\epsilon>0$,
\begin{equation}
\label{es:boundary}
\begin{split}
|\gamma_0u|\leqslant&\;C\|D_t^2u\|^{1/4}\|u\|^{3/4}\leqslant \epsilon\|D_t^2u\|+C_\epsilon\|u\|\\
|\gamma_1u|\leqslant&\;C\|D_t^2u\|^{3/4}\|u\|^{1/4}\leqslant \epsilon\|D_t^2u\|+C_\epsilon\|u\|.
\end{split}
\end{equation}
Now from \eqref{norm:b} and \eqref{id:airy} we get
\begin{equation}
\label{eq:normairy}
\|u\|_B\leqslant C(\|u\|_{L^2}+\|(D_t^2+t)u\|_{L^2})
\end{equation}
and
\begin{equation}
\label{esB:boundary}
|\gamma_0u|\leqslant C\|u\|_B,\;\;\; |\gamma_1u|\leqslant C\|u\|_B.
\end{equation}

We finish this part by using these two estimates to show that elements in $B$ can be written in a unique way as a linear combination of the Neumann Airy eigenfunctions $(e_j)_{j=1}^\infty$ introduced in the previous section and one other element $f\in B$ with $\gamma_1f\neq0$. We remark that $(e_j)$ is not an orthonormal basis in $B$, so this expression might be different from the orthogonal expansion in $L^2$.

On one hand, if the sum $\sum_ju_je_j$ converges in $B$ to some $u$, then by \eqref{esB:boundary} we have $\gamma_1u=\sum_ju_j\gamma_1e_j=0$. On the other hand, if $u\in B$ satisfies $\gamma_1u=u'(0)=0$, then we can consider the $L^2$-orthogonal expansion
\begin{equation}
\label{ex:ortho}
u=\sum_j\langle u,e_j\rangle e_j.
\end{equation}
By \eqref{eq:normairy}, we have for any finite subset $J$ of $\mathbb{Z}_+$,
\begin{equation*}
\begin{split}
\|\sum_{j\in J}\langle u,e_j\rangle e_j\|_B\leqslant&\; C(\|\sum_{j\in J}\langle u, e_j\rangle e_j\|+\|(D_t^2+t)\sum_{j\in J}
\langle u, e_j\rangle e_j\|)\\
\leqslant&\; C(\|\sum_{j\in J}\langle u, e_j\rangle e_j\|+\|\sum_{j\in J}\zeta_j'\langle u, e_j\rangle e_j\|)\\
\leqslant&\; C(\|\sum_{j\in J}\langle u, e_j\rangle e_j\|+\|\sum_{j\in J}\langle u, (D_t^2+t)e_j\rangle e_j\|)\\
\leqslant&\; C(\|\sum_{j\in J}\langle u, e_j\rangle e_j\|+\|\sum_{j\in J}\langle (D_t^2+t)u, e_j\rangle e_j\|).\\
\end{split}
\end{equation*}
which shows that the sum \eqref{ex:ortho} converges to $u$ in $B$ since $(D_t^2+t)u\in L^2$.

Therefore if we fix some $f\in B$ such that $\gamma_1f=f'(0)\neq0$, then every $u\in B$ can be uniquely expressed in the form
\begin{equation}
\label{ex:B}
u=u_0f+\sum_{j=1}^\infty u_je_j
\end{equation}
where the sum converges in $B$. We simply choose $u_0$ first such that $\gamma_1(u-u_0f)=0$, then write the orthogonal expansion of $u-u_0f$ by $(e_j)$ in $L^2$, i.e. $u_j=\langle u-u_0f,e_j\rangle$.

\subsection{Model Airy problem}
The operator in \eqref{model:airy} (taking $\mu=1$) combining with the Neumann boundary operator
\begin{equation}
\label{model:airycomb}
\left(
  \begin{array}{c}
    P_\lambda-z \\
    \gamma_1 \\
  \end{array}
\right):B\to L^2\times\mathbb{C}
\end{equation}
may not be invertible for all $z$ with $|\Im z|<C_1$. In fact,
let us take $N=N(C_1)$ as the largest number such that
\begin{equation*}
|\Im e^{-2\pi i/3}\zeta'_N|\leqslant C_1,
\end{equation*}
so that $e^{-2\pi i/3}\zeta'_j+\lambda-z\neq0$ for $j\geqslant N+1$.
Then \eqref{model:airycomb} is not invertible precisely when $e^{-2\pi i/3}\zeta'_j+\lambda-z=0$ for some $j=1,\ldots,N$ since $e_j$ is in its kernel. Therefore we need to correct this operator in a suitable way to make it invertible. We shall also modify our spaces by putting an exponential weight. Moreover, we also need to add correct powers of $\langle\lambda-\Re z\rangle$ in the norm.

More precisely, let us consider the following Grushin problem for \eqref{model:airycomb}:
\begin{equation}
\label{model:airygrushin}
\mathcal{P}_\lambda(z)=\left(
                         \begin{array}{cc}
                           P_\lambda-z & R_-^0 \\
                           \gamma_1 & r_- \\
                           R_+^0 & 0 \\
                         \end{array}
                       \right):
\mathcal{B}_{z,\lambda,r}\to\mathcal{H}_{z,\lambda,r}
\end{equation}
(Later on we shall always choose $r_-=0$.) Here the spaces and the norms on the spaces are given by
\begin{equation}
\label{model:airyspace}
\begin{split}
\mathcal{B}_{z,\lambda,r}&\;=B_{z,\lambda,r}\times\mathbb{C}^N,\\
\left\|\left(
         \begin{array}{c}
           u \\
           u_- \\
         \end{array}
       \right)
\right\|_{\mathcal{B}_{z,\lambda,r}}&\;
=\|u\|_{B_{z,\lambda,r}}+|u_-|,\\
\mathcal{H}_{z,\lambda,r}&\;=L^2_r\times
\mathbb{C}_{\langle\lambda-\Re z\rangle^{1/4}}\times
\mathbb{C}^N_{\langle\lambda-\Re z\rangle},\\
\left\|\left(
         \begin{array}{c}
           v \\
           v_0 \\
           v_+
         \end{array}
       \right)
\right\|_{\mathcal{H}_{z,\lambda,r}}&\;=\|v\|_{L^2_r}
+\langle\lambda-\Re z\rangle^{1/4}|v_0|
+\langle\lambda-\Re z\rangle|v_+|.
\end{split}
\end{equation}
with $|\cdot|$ fixed norms on $\mathbb{C}$ or $\mathbb{C}^N$ and $L^2_r=L^2([0,\infty),e^{rt}dt),
B_{z,\lambda,r}=\{u\in L^2_r; D_t^2u, tu\in L^2_r\}$. The norms are given by the standard weighted $L^2$-norm $\|\cdot\|_{L^2_r}$ and
\begin{equation}
\label{norm:weighted}
\|u\|_{B_{z,\lambda,r}}=\langle\lambda-\Re z\rangle\|u\|_{L^2_r}
+\|D_t^2u\|_{L^2_r}+\|tu\|_{L^2_r},
\end{equation}
respectively. Moreover, the operators are given by
\begin{equation*}
\begin{split}
P_\lambda-z&: B_r\to L^2_r,\;\;\; u\mapsto(e^{-2\pi i/3}(D_t^2+t)+\lambda-z)u; \\
\gamma_1&: B_r\to\mathbb{C},\;\;\; u\mapsto u'(0);\\
R_+^0&: B_r\to\mathbb{C}^N,\;\;\; u\mapsto (\langle u,e_j\rangle)_{1\leqslant j\leqslant N};\\
R_-^0&: \mathbb{C}^N\to L^2_r,\;\;\; u_-\mapsto\sum_{j=1}^Nu_-(j)e_j;\\
r_-&: \mathbb{C}^N\to\mathbb{C},\;\;\; u_-\mapsto\sum_{j=1}^Nr_ju_-(j).
\end{split}
\end{equation*}

We remark that the heuristic reason for the weight $\langle\lambda-\Re z\rangle^{1/4}$ in the second component $\mathbb{C}$ on $\mathcal{H}_{z,\lambda,r}$ is that $\langle\lambda-\Re z\rangle$ roughly represents the Laplacian on the boundary $\langle\Delta_{\partial\mathcal{O}}\rangle$ (up to some parameters). Therefore if $u\in H^2(\mathbb{R}^n\setminus\mathcal{O})$, then by the well-known property of boundary operators $\partial_\nu u|_{\partial\mathcal{O}}\in H^{1/2}(\partial\mathcal{O})$ the norm of which corresponds to $\langle\lambda-\Re z\rangle^{1/4}$. We can also see that this is the correct weight by rescaling the estimate \eqref{es:boundary}. For the same reason, if we wish to work with Dirichlet boundary operator, then we need to replace this weight $\langle\lambda-\Re z\rangle^{1/4}$ by $\langle\lambda-\Re z\rangle^{3/4}$.

Moreover, to handle powers of $t$ which will appear in lower order terms, it is necessary to introduce the exponential weight $e^{rt}, r>0$ in the definition of spaces $\mathcal{B}_{z,\lambda,r}$ and $\mathcal{H}_{z,\lambda,r}$. This will be explained in full details in the next section.

For $r=0$, it is clear that the space $B_{z,\lambda,0}$ is just $B$ in the previous section with an equivalent norm (of course not uniformly in $z,\lambda$) and $\mathcal{P}_\lambda(z):
\mathcal{B}_{z,\lambda,0}\to\mathcal{H}_{z,\lambda,0}$ is a uniformly bounded operator. Now we look for the inverse of $\mathcal{P}_\lambda(z)$. Let
\begin{equation}
\label{eq:grushin}
\mathcal{P}_\lambda(z)\left(
                        \begin{array}{c}
                          u \\
                          u_- \\
                        \end{array}
                      \right)
                      =\left(
                        \begin{array}{c}
                          v \\
                          v_0 \\
                          v_+\\
                        \end{array}
                      \right).
\end{equation}
Then explicitly we have
\begin{equation*}
\begin{split}
(P_\lambda-z)u+R_-^0u_-=&\;v\\
u'(0)+r_-u_-=&\;v_0\\
R_+^0u=&\;v_+.
\end{split}
\end{equation*}
We express $v$ in terms of the orthonormal basis $(e_j)_{j=1}^\infty$ in $L^2$:
\begin{equation*}
v=\sum_{j=1}^\infty v_je_j,
\end{equation*}
and we write $v_+=(v_+(j))_{1\leqslant j\leqslant N}$. Then we look for solutions with $u\in B$ as in \eqref{ex:B}
\begin{equation*}
u=u_0f+\sum_{j=1}^\infty u_je_j
\end{equation*}
and
\begin{equation*}
u_-=(u_-(j))_{1\leqslant j\leqslant N}.
\end{equation*}
Let us write
\begin{equation*}
f_0:=f'(0),\;\;\;f_j:=\langle f,e_j\rangle, \;\;\; \eta_j:=e^{-2\pi i/3}\zeta_j'+\lambda-z.
\end{equation*}
then we have
\begin{equation*}
(P_\lambda-z)e_j=\eta_je_j,\;\;\; (P_\lambda-z)^\ast e_j=\bar{\eta}_je_j.
\end{equation*}
where $(P_\lambda-z)^\ast=e^{2\pi i/3}(D_t^2+t)+\lambda-\bar{z}$ is the formal adjoint of $P_\lambda-z$. Moreover,
\begin{equation*}
\langle(P_\lambda-z)f,e_j\rangle=e^{-2\pi i/3}e_j(0)f_0+\langle f,(P_\lambda-z)^\ast e_j\rangle=e^{-2\pi i/3}e_j(0)f_0+\eta_jf_j.
\end{equation*}
Then we can rewrite the system \eqref{eq:grushin} as an infinite system of linear equations:
\begin{equation}
\label{eq:grushin2}
\begin{split}
[e^{-2\pi i/3}e_j(0)f_0+\eta_jf_j]u_0+\eta_ju_j+u_-(j)=&\;v_j,\;\; (1\leqslant j\leqslant N)\\
[e^{-2\pi i/3}e_j(0)f_0+\eta_jf_j]u_0+\eta_ju_j=&\;v_j,\;\; (j\geqslant N+1)\\
f_0u_0+\sum_{j=1}^Nr_ju_-(j)=&\;v_0\\
f_ju_0+u_j=&\;v_+(j),\;\; (1\leqslant j\leqslant N).\\
\end{split}
\end{equation}
It is not difficult to see that as long as
\begin{equation*}
1-e^{-2\pi i/3}\sum_{j=1}^Nr_je_j(0)\neq0,
\end{equation*}
we have a unique solution for \eqref{eq:grushin2},
\begin{equation*}
\begin{split}
u_0=&\;\left[1-e^{-2\pi i/3}\sum_{j=1}^Nr_je_j(0)\right]^{-1}
f_0^{-1}\left[v_0+\sum_{j=1}^Nr_j(\eta_jv_+(j)-v_j)\right]\\
u_j=&\;v_+(j)-f_ju_0, \;\;\;(1\leqslant j\leqslant N)\\
u_j=&\;\eta_j^{-1}(v_j-(e^{-2\pi i/3}e_j(0)f_0+\eta_jf_j)u_0), \;\;\;(j\geqslant N+1)\\
u_-(j)=&\;v_j-\eta_jv_+(j)-e^{-2\pi i/3}e_j(0)f_0u_0, \;\;\;(1\leqslant j\leqslant N).
\end{split}
\end{equation*}
For simplicity, henceforth we shall choose $f_0=1,r_-=0$ (though other choices are also possible). Then the solution becomes
\begin{equation}\label{eq:solution}
\begin{split}
u_0=&\;v_0\\
u_j=&\;v_+(j)-f_jv_0, \;\;\;(1\leqslant j\leqslant N)\\
u_j=&\;\eta_j^{-1}(v_j-e^{-2\pi i/3}e_j(0)v_0)-f_jv_0, \;\;\;(j\geqslant N+1)\\
u_-(j)=&\;v_j-e^{-2\pi i/3}e_j(0)v_0-\eta_jv_+(j),
\;\;\;(1\leqslant j\leqslant N).
\end{split}
\end{equation}

Now we need to estimate the norm.
\begin{lem}
\label{lem:airy1}
The Grushin problem \eqref{model:airygrushin} is well-posed for $r=0$. In other words, suppose \eqref{eq:grushin}, then we have
\begin{equation}
\label{es:inv0}
\|u\|_{B_{z,\lambda,0}}+|u_-|\leqslant C(\|v\|_{L^2}+\langle\lambda-\Re z\rangle^{1/4}|v_0|+\langle\lambda-\Re z\rangle|v_+|).
\end{equation}
where $C$ is independent of $\lambda,z$.
\end{lem}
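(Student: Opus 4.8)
The plan is to prove the estimate \eqref{es:inv0} by reading off the explicit solution \eqref{eq:solution} term by term and controlling each norm with the basic estimates from the previous subsections. The key point is that for $r=0$ we have $L^2_0=L^2$, the eigenfunctions $(e_j)$ form an orthonormal basis, and $\|u\|_{B_{z,\lambda,0}}=\langle\lambda-\Re z\rangle\|u\|_{L^2}+\|D_t^2u\|_{L^2}+\|tu\|_{L^2}$, so by \eqref{eq:normairy} applied to the Airy part it suffices to bound $\langle\lambda-\Re z\rangle\|u\|_{L^2}$, $\|(D_t^2+t)u\|_{L^2}$, and $|u_-|$.

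First I would estimate the pieces of $u$ in terms of the $(e_j)$-coordinates. From \eqref{eq:solution}, $u=v_0 f+\sum_{j=1}^N(v_+(j)-f_jv_0)e_j+\sum_{j\geqslant N+1}\bigl[\eta_j^{-1}(v_j-e^{-2\pi i/3}e_j(0)v_0)-f_jv_0\bigr]e_j$. For $j\geqslant N+1$ the definition of $N=N(C_1)$ gives a uniform lower bound $|\eta_j|=|e^{-2\pi i/3}\zeta_j'+\lambda-z|\geqslant c(1+|\zeta_j'-\text{something}|)$; more precisely since $\Im(e^{-2\pi i/3}\zeta_j')\to-\infty$ while $|\Im z|<C_1$, for $j\geqslant N+1$ one has $|\eta_j|\geqslant c\langle\zeta_j'\rangle$ and also $|\eta_j|\geqslant c\langle\lambda-\Re z\rangle$ (the real parts $\Re(e^{-2\pi i/3}\zeta_j')=\tfrac12\zeta_j'>0$ add constructively or at worst the imaginary part dominates). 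Combined with the pointwise bound $|e_j(0)|\leqslant C$ and $\sum_j|f_j|^2<\infty$ (since $f\in B\subset L^2$), Parseval gives $\|\sum_{j\geqslant N+1}\eta_j^{-1}v_je_j\|_{L^2}\leqslant C\langle\lambda-\Re z\rangle^{-1}\|v\|_{L^2}$ and similarly for the $v_0$ and $f_jv_0$ contributions, so $\langle\lambda-\Re z\rangle\|u\|_{L^2}\lesssim\|v\|_{L^2}+\langle\lambda-\Re z\rangle(|v_+|+|v_0|)$. For $\|(D_t^2+t)u\|_{L^2}$ I use $(D_t^2+t)e_j=\zeta_j'e_j$, so the tail sum contributes $\|\sum_{j\geqslant N+1}\zeta_j'\eta_j^{-1}(v_j-e^{-2\pi i/3}e_j(0)v_0)e_j\|$, bounded by $C(\|v\|+|v_0|)$ using $|\zeta_j'/\eta_j|\leqslant C$; the finite sum over $j\leqslant N$ and the $f$-term are bounded by $C(|v_+|+|v_0|+\|(D_t^2+t)f\|)$, absorbing the fixed constant $\|(D_t^2+t)f\|$. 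Since $\langle\lambda-\Re z\rangle^{1/4}\leqslant\langle\lambda-\Re z\rangle$ these all fit under the right-hand side of \eqref{es:inv0}. Finally $|u_-|$: from the last line of \eqref{eq:solution}, $|u_-(j)|\leqslant|v_j|+C|v_0|+|\eta_j||v_+(j)|$ for the finitely many $j\leqslant N$, and here $|\eta_j|\leqslant C\langle\lambda-\Re z\rangle$, giving $|u_-|\leqslant C(\|v\|+|v_0|+\langle\lambda-\Re z\rangle|v_+|)$.

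The main obstacle I anticipate is being careful and honest about the lower bounds on $|\eta_j|$ for $j\geqslant N+1$ and the corresponding claim $|\eta_j|\geqslant c\langle\lambda-\Re z\rangle$: one must check that $\eta_j=e^{-2\pi i/3}\zeta_j'+\lambda-z$ cannot be small when $\lambda-\Re z$ is large, which follows because $\Re\eta_j=\tfrac12\zeta_j'+\lambda-\Re z$ and $\Im\eta_j=-\tfrac{\sqrt3}{2}\zeta_j'-\Im z$, and for $j\geqslant N+1$ the imaginary part already has modulus $>C_1\geqslant|\Im z|$, so either $|\Im\eta_j|$ is bounded below by a positive constant or, if $\zeta_j'$ is comparable to $|\lambda-\Re z|$, the real part is comparable to $|\lambda-\Re z|$; a short case split makes this rigorous. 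Everything else is bookkeeping: assembling the three contributions with \eqref{eq:normairy} and noting the constants depend only on $C_1$ (through $N$) and on the fixed choice of $f$, hence are independent of $\lambda$ and $z$.
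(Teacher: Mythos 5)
Your plan is structurally the same as the paper's, but it breaks down at exactly the step the paper spends most of its effort on: the $|v_0|$-term. You bound the tail coefficients using only the crude inequality $|e_j(0)|\leqslant C$ and claim a bound $\langle\lambda-\Re z\rangle\|u\|_{L^2}\lesssim\|v\|+\langle\lambda-\Re z\rangle(|v_+|+|v_0|)$; then you write ``Since $\langle\lambda-\Re z\rangle^{1/4}\leqslant\langle\lambda-\Re z\rangle$ these all fit under the right-hand side of \eqref{es:inv0}.'' This is a reversal of the inequality direction. The estimate \eqref{es:inv0} asks for a bound by $\langle\lambda-\Re z\rangle^{1/4}|v_0|$, which is \emph{smaller} than $\langle\lambda-\Re z\rangle|v_0|$, so having proved a bound with the larger coefficient gives you nothing. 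The uniform $|e_j(0)|\leqslant C$ bound only yields $\sum_{j\geqslant N+1}|\eta_j|^{-2}|e_j(0)|^2=O(\langle\lambda-\Re z\rangle^{-1/2})$, hence $\langle\lambda-\Re z\rangle\|u\|_{L^2}\lesssim\cdots+\langle\lambda-\Re z\rangle^{3/4}|v_0|$, still losing a factor of $\langle\lambda-\Re z\rangle^{1/2}$ compared to the target. The paper closes this gap by doing a careful asymptotic analysis of the Airy function to show $|e_j(0)|^2\sim c_1 j^{-2/3}$ as $j\to\infty$ (using $\zeta_j'\sim(\tfrac{3}{2}j\pi)^{2/3}$, the asymptotics of $\Ai(-\zeta_j')$, and the growth of $\|\Ai\|_{L^2(-\zeta_j',\infty)}$), which improves the tail sum to $O(\langle\lambda-\Re z\rangle^{-3/2})$ and gives precisely the $\langle\lambda-\Re z\rangle^{1/4}|v_0|$ on the right-hand side. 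Without this sharp decay of $|e_j(0)|$, the lemma as stated is not obtained.

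Two secondary remarks. First, you compute $\Re(e^{-2\pi i/3}\zeta_j')=\tfrac12\zeta_j'>0$, but $e^{-2\pi i/3}=-\tfrac12-i\tfrac{\sqrt3}{2}$, so $\Re(e^{-2\pi i/3}\zeta_j')=-\tfrac12\zeta_j'<0$; the real parts can therefore cancel, and the lower bound on $|\eta_j|$ genuinely requires the case split (use $\Re\eta_j$ when $\langle\lambda-\Re z\rangle\gg\zeta_j'$, use $\Im\eta_j$ otherwise), as the paper does. Second, you carry the $v_0 f$ term explicitly through the $L^2$-estimate and invoke $\sum|f_j|^2<\infty$; this is not wrong, but the cleaner observation in the paper is that $f-\sum_j f_j e_j=0$ in $L^2$, so the $f$-contribution cancels identically from the $L^2$-expansion of $u$, and $f$ only enters through the $B$-norm control via $(D_t^2+t)u$.
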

\begin{proof}
We first observe that for $1\leqslant j\leqslant N$,
\begin{equation*}
|\eta_j|\leqslant C\langle\lambda-\Re z\rangle
\end{equation*}
while for $j\geqslant N+1$
\begin{equation*}
|\eta_j|\geqslant C^{-1}(\langle\lambda-\Re z\rangle+\zeta_j').
\end{equation*}
The first inequality just follows the definition $\eta_j=e^{-2\pi i/3}\zeta_j'+\lambda-z$ and the assumption $|\Im z|<C_1$. When $\langle\lambda-\Re z\rangle\geqslant C\zeta_j'$, we can get the second inequality simply by estimating the real part using $|\Re\eta_j|\geqslant|\lambda-z|-C\zeta_j'$. Otherwise we use the imaginary part $\Im\eta_j=-(\sin2\pi/3)\zeta_j'-\Im z$ which does not vanish from the assumption on $N$. Therefore $|\Im\eta_j|\geqslant C^{-1}\zeta_j'$ and we also get the second inequality.

From the last equation in \eqref{eq:solution}, we easily get
\begin{equation}
|u_-|\leqslant C(\|v\|_{L^2}+|v_0|+\langle\lambda-\Re z\rangle|v_+|).
\end{equation}
To estimate $u$, we first write its orthogonal expansion in $L^2$ following the first three equations in \eqref{eq:solution}
\begin{equation*}
\begin{split}
u=&\;u_0f+\sum_{j=1}^\infty u_je_j\\
=&\;v_0\left(f-\sum_{j=1}^\infty f_je_j\right)
+\sum_{j=1}^Nv_+(j)e_j
+\sum_{j=N+1}^\infty\eta_j^{-1}(v_j-e^{-2\pi i/3}e_j(0)v_0)e_j\\
=&\;\sum_{j=1}^Nv_+(j)e_j
+\sum_{j=N+1}^\infty\eta_j^{-1}(v_j-e^{-2\pi i/3}e_j(0)v_0)e_j
\end{split}
\end{equation*}
which shows that
\begin{equation*}
\begin{split}
\|u\|_{L^2}^2=&\;\sum_{j=1}^N|v_+(j)|^2
+\sum_{j=N+1}^\infty|\eta_j|^{-2}|v_j-e^{-2\pi i/3}e_j(0)v_0|^2\\
\leqslant&\;C|v_+|^2+C\langle\lambda-\Re z\rangle^{-2}\|v\|_{L^2}^2
+C|v_0|^2\sum_{j=N+1}^\infty|\eta_j|^{-2}|e_j(0)|^2.
\end{split}
\end{equation*}

To treat the last term, we need a careful study of Airy functions. Recall that
\begin{equation*}
e_j(0)=\Ai(-\zeta_j')/\|\Ai\|_{L^2(-\zeta_j',\infty)}.
\end{equation*}
From the asymptotics \eqref{asy:Airy-neg}, it is not difficult to see that
\begin{equation*}
\zeta_j'=(\frac{3}{2}j\pi)^{2/3}(1+o(1)),\;\;\;j\to\infty
\end{equation*}
and
\begin{equation*}
\Ai(-\zeta_j')=(-1)^{j-1}\pi^{-1/2}(\frac{3}{2}j\pi)^{-1/6}(1+o(1)),\;\;\;
j\to\infty.
\end{equation*}
To compute the normalizing factor, we use
\begin{equation*}
\begin{split}
\|\Ai\|_{L^2(-\zeta_{k+1}',-\zeta_k')}^2\;&= (1+o(1))\pi^{-1}\int_{\zeta_k'}^{\zeta_{k+1}'}t^{-1/2}
|\sin(\frac{2}{3}t^{3/2}+\frac{\pi}{4})|^2dt\\
\;&=(1+o(1))\pi^{-1}
\int_{\frac{2}{3}\zeta_k'^{3/2}}^{\frac{2}{3}\zeta_{k+1}'^{3/2}}
|\sin(s+\frac{\pi}{4})|^2s^{-2/3}ds=\frac{1}{2}(k\pi)^{-2/3}(1+o(1)),
\end{split}
\end{equation*}
as $k\to\infty$. Here in the second step, we use the natural change of variables $s=\frac{2}{3}t^{3/2}$ while in the third step, we use that $s=\frac{2}{3}\zeta_k'^{3/2}(1+o(1))=k\pi(1+o(1))$ on $(\frac{2}{3}\zeta_k'^{3/2},\frac{2}{3}\zeta_{k+1}'^{3/2})$ and the integral of $|\sin(s+\frac{\pi}{4})|^2$ over this interval is equal to
\begin{equation*}
(1+o(1))\int_{k\pi}^{(k+1)\pi}|\sin(s+\frac{\pi}{4})|^2ds\
=\frac{\pi}{2}(1+o(1)).
\end{equation*}
Therefore
\begin{equation*}
\begin{split}
\|\Ai\|_{L^2(-\zeta_j',\infty)}^2=&\;
\|\Ai\|_{L^2(-\zeta_1',\infty)}^2+\sum_{k=1}^{j-1}
\|\Ai\|_{L^2(-\zeta_{k+1}',-\zeta_k')}^2\\
=&\;c_0(1+o(1))\sum_{k=1}^{j-1}k^{-2/3}=c_0j^{1/3}(1+o(1)).
\end{split}
\end{equation*}
As a consequence, we have
\begin{equation*}
|e_j(0)|^2=c_1j^{-2/3}(1+o(1)),\;\;\;j\to\infty
\end{equation*}
for some constant $c_1>0$. Now we can compute
\begin{equation*}
\begin{split}
\sum_{j=N+1}^\infty|\eta_j|^{-2}|e_j(0)|^2
\leqslant&\; C\sum_{j=N+1}^\infty j^{-2/3}(\langle\lambda-\Re z\rangle+\zeta_j')^{-2}\\
\leqslant&\; C\sum_{j=N+1}^\infty j^{-2/3}(\langle\lambda-\Re z\rangle+j^{2/3})^{-2}\\
\leqslant&\; C\int_1^\infty s^{-2/3}(\langle\lambda-\Re z\rangle+s^{2/3})^{-2}ds\\
\leqslant&\; C\langle\lambda-\Re z\rangle^{-3/2}\int_0^\infty
t^{-2/3}(1+t^{2/3})^{-2}dt\leqslant C\langle\lambda-\Re z\rangle^{-3/2},
\end{split}
\end{equation*}
where the last step we use the change of variable $s=\langle\lambda-\Re z\rangle^{3/2}t$. This gives the following estimate on the $L^2$-norm of $u$:
\begin{equation}
\label{es:L2norm}
\langle\lambda-\Re z\rangle\|u\|_{L^2}\leqslant C(\|v\|_{L^2}+\langle\lambda-\Re z\rangle^{1/4}|v_0|+\langle\lambda-\Re z\rangle|v_+|).
\end{equation}
Now since
\begin{equation*}
(D_t^2+t)u=e^{2\pi i/3}(v-R_-^0u_--(\lambda-z)u),
\end{equation*}
we have
\begin{equation*}
\|(D_t^2+t)u\|_{L^2}\leqslant
C(\|v\|_{L^2}+|u_-|+\langle\lambda-\Re z\rangle\|u\|_{L^2})
\end{equation*}
Now we can use a variation of \eqref{eq:normairy}
\begin{equation*}
\|u\|_{B_{z,\lambda,0}}\leqslant C(\|(D_t^2+t)u\|_{L^2}+\langle\lambda-\Re z\rangle\|u\|_{L^2})
\end{equation*}
and \eqref{es:L2norm} to get \eqref{es:inv0}.
\end{proof}

The next step is to consider adding a small exponential weight, i.e. $r\in(0,r_0)$ for $r_0$ small.
\begin{lem}
\label{lem:airy2}
There exists $r_0>0$ such that the Grushin problem \eqref{model:airygrushin} is uniformly well-posed for $r\in(0,r_0)$. More precisely, suppose \eqref{eq:grushin}, then we have
\begin{equation}
\label{es:invr}
\|u\|_{B_{z,\lambda,r}}+|u_-|\leqslant C(\|v\|_{L^2_r}+\langle\lambda-\Re z\rangle^{1/4}|v_0|+\langle\lambda-\Re z\rangle|v_+|).
\end{equation}
where $C$ is independent of $\lambda,z$ and $r$.
\end{lem}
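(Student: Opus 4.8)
The strategy is to treat the weighted problem on $L^2_r$ as a conjugation of the unweighted problem by the multiplication operator $e^{rt/2}$, and then use Lemma~\ref{lem:airy1} together with a perturbation estimate, uniform in $r\in(0,r_0)$. Writing $u=e^{-rt/2}\tilde u$, the operator $e^{rt/2}(D_t^2+t)e^{-rt/2}=D_t^2+irD_t-r^2/4+t$ differs from $D_t^2+t$ only by the first-order term $irD_t-r^2/4$, which is $O(r)$ relative to the $B$-norm by the interpolation estimate \eqref{es:interpolation}. Similarly the boundary operator $\gamma_1$ conjugates to $u'(0)\mapsto \tilde u'(0)-\tfrac{r}{2}\tilde u(0)$, again an $O(r)$ perturbation of $\gamma_1$ controlled by \eqref{esB:boundary}, while $R_\pm^0$ and $r_-$ change by $O(r)$ in operator norm since the $e_j$ and the pairings are smooth. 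So the conjugated Grushin operator on the unweighted spaces has the form $\mathcal{P}_\lambda(z)+r\mathcal{E}_\lambda(z,r)$ where $\mathcal{E}_\lambda(z,r)$ is bounded $\mathcal{B}_{z,\lambda,0}\to\mathcal{H}_{z,\lambda,0}$ with norm bounded uniformly in $z,\lambda,r$ (here the powers $\langle\lambda-\Re z\rangle$ in the norms matter: one must check that the extra terms are not merely bounded but have the right decay in $\langle\lambda-\Re z\rangle$, which follows because $irD_t$ has $B$-norm $\lesssim r\langle\lambda-\Re z\rangle^{1/2}$ and the weighted $B$-norm carries a factor $\langle\lambda-\Re z\rangle$, so the relative size is $r\langle\lambda-\Re z\rangle^{-1/2}=o_r(1)$).

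Given this, the argument proceeds as follows. First I would record the conjugation identity precisely, identifying the conjugated operator and its error term $\mathcal{E}_\lambda(z,r)$, and verify using \eqref{es:interpolation}, \eqref{es:boundary}, \eqref{esB:boundary} and the asymptotics $|e_j(0)|^2\asymp j^{-2/3}$ (already established in the proof of Lemma~\ref{lem:airy1}) that $\|\mathcal{E}_\lambda(z,r)\|_{\mathcal{B}_{z,\lambda,0}\to\mathcal{H}_{z,\lambda,0}}\leqslant C$ uniformly. Second, I would invoke Lemma~\ref{lem:airy1}, which gives a bounded inverse $\mathcal{E}_\lambda(z)$ of $\mathcal{P}_\lambda(z)$ with $\|\mathcal{E}_\lambda(z)\|\leqslant C_0$ uniformly in $z,\lambda$. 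Third, a Neumann series argument: choosing $r_0>0$ so small that $C_0 C r_0<\tfrac12$, the operator $\mathcal{P}_\lambda(z)+r\mathcal{E}_\lambda(z,r)=\mathcal{P}_\lambda(z)\bigl(I+r\mathcal{E}_\lambda(z)\mathcal{E}_\lambda(z,r)\bigr)$ is invertible with inverse norm $\leqslant 2C_0$, uniformly in $r\in(0,r_0)$, $z$, $\lambda$. Fourth, undoing the conjugation, this is exactly the statement that $\mathcal{P}_\lambda(z):\mathcal{B}_{z,\lambda,r}\to\mathcal{H}_{z,\lambda,r}$ has a bounded inverse with norm $\leqslant 2C_0$; spelling out what "bounded inverse" means on the weighted spaces in terms of the components $(u,u_-)$ and $(v,v_0,v_+)$ gives precisely \eqref{es:invr}.

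The main obstacle, and the step deserving the most care, is the uniformity of the error bound in $\langle\lambda-\Re z\rangle$: one must make sure the conjugation error $\mathcal{E}_\lambda(z,r)$ is bounded \emph{with the $\langle\lambda-\Re z\rangle$-weighted norms}, not just for fixed $\lambda,z$. The dangerous term is $irD_tu$ contributing to the $L^2_r$ right-hand side: it must be absorbed into $\langle\lambda-\Re z\rangle\|u\|_{L^2_r}$, and since $\|D_tu\|\lesssim \|D_t^2u\|^{1/2}\|u\|^{1/2}\lesssim \varepsilon\|D_t^2u\|+\varepsilon^{-1}\|u\|$ one gets $r\|D_tu\|_{L^2_r}\leqslant r\varepsilon\|u\|_{B_{z,\lambda,r}}+r\varepsilon^{-1}\|u\|_{L^2_r}$; taking $\varepsilon=\langle\lambda-\Re z\rangle^{-1}$ turns the second term into $r\langle\lambda-\Re z\rangle\|u\|_{L^2_r}\leqslant r\|u\|_{B_{z,\lambda,r}}$, which is what we want. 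The boundary terms are handled the same way via \eqref{es:boundary} with the $\varepsilon$ rescaled, matching the $\langle\lambda-\Re z\rangle^{1/4}$ weight on the $\mathbb{C}$-factor. Once these rescalings are checked the rest is a routine Neumann series.
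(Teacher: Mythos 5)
Your proposal is correct and follows essentially the same route as the paper: conjugate by $e^{rt/2}$, use the interpolation estimate \eqref{es:interpolation} and the boundary estimate \eqref{es:boundary} to check that the resulting perturbation of $\mathcal{P}_\lambda(z)$ is small uniformly in $z,\lambda$ on the $\langle\lambda-\Re z\rangle$-weighted spaces, invoke Lemma~\ref{lem:airy1} and a Neumann series, and undo the conjugation via the equivalence $\|u\|_{B_{z,\lambda,r}}\sim\|e^{rt/2}u\|_{B_{z,\lambda,0}}$. The one small inaccuracy is your justification for the $R_\pm^0$ perturbation: what makes $\|(e^{\pm rt/2}-1)e_j\|_{L^2}$ small is the super-exponential decay of the Airy eigenfunctions $e_j$ (needed to beat the exponential growth of $e^{rt/2}$), not smoothness.
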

\begin{proof}
We introduce
\begin{equation*}
\begin{split}
\mathcal{P}_\lambda^r(z)=&\;
\left(
  \begin{array}{ccc}
    e^{rt/2} & 0 & 0 \\
    0 & 1 & 0 \\
    0 & 0 & 1 \\
  \end{array}
\right)\mathcal{P}_0^\lambda
\left(
  \begin{array}{cc}
    e^{-rt/2} & 0 \\
    0 & 1 \\
  \end{array}
\right)\\
=&\;\mathcal{P}_\lambda(z)+
\left(
  \begin{array}{cc}
    e^{rt/2}(P_\lambda-z)e^{-rt/2} & (e^{rt/2}-1)R_-^0 \\
    \gamma_1(e^{-rt/2}-1) & 0 \\
    R_+^0(e^{-rt/2}-1) & 0 \\
  \end{array}
\right)
\end{split}
\end{equation*}

By the interpolation estimate \eqref{es:interpolation}, we have \begin{equation*}
D_t=O(\langle\lambda-\Re z\rangle^{-1/2}):
B_{z,\lambda,0}\to L^2,
\end{equation*}
thus
\begin{equation*}
e^{rt/2}(P_\lambda-z)e^{-rt/2}=e^{-2\pi i/3}(irD_t-\frac{1}{4}r^2)=O(r\langle\lambda-\Re z\rangle^{-1/2}):
B_{z,\lambda,0}\to L^2.
\end{equation*}

Next, by \eqref{es:boundary},
\begin{equation*}
\gamma_0=O(\langle\lambda-\Re z\rangle^{-3/4})
:B_{z,\lambda,0}\to\mathbb{C},
\end{equation*}
so
\begin{equation*}
\gamma_1(e^{-rt/2}-1)=-\frac{r}{2}\gamma_0=O(r\langle\lambda-\Re z\rangle^{-1/2})
:B_{z,\lambda,0}\to\mathbb{C}_{\langle\lambda-\Re z\rangle^{1/4}}.
\end{equation*}

Also by the super exponential decay of $e_j, j=1,\ldots,N$: $\|(e^{-rt/2}-1)e_j(t)\|_{L^2}=o(1)$, so
\begin{equation*}
R_+^0(e^{-rt/2}-1)=o(1):B_{z,\lambda,0}\to
\mathbb{C}^N_{\langle\lambda-\Re z\rangle}.
\end{equation*}
Similarly, we have $\|(e^{rt/2}-1)e_j(t)\|_{L^2}=o(1)$, and
\begin{equation*}
(e^{rt/2}-1)R_-^0=o(1):\mathbb{C}^N\to L^2.
\end{equation*}

We see that $\mathcal{P}_\lambda^r(z)$ is a small perturbation of $\mathcal{P}_\lambda(z)$ in the sense that
\begin{equation*}
\mathcal{P}_\lambda^r(z)-\mathcal{P}_\lambda(z)=o(1):
\mathcal{B}_{z,\lambda,0}\to\mathcal{H}_{z,\lambda,0}
\end{equation*}
uniformly in $z,\lambda$ as $r\to0+$.
Therefore
\begin{equation*}
\mathcal{P}_\lambda^r(z):
\mathcal{B}_{z,\lambda,0}\to\mathcal{H}_{z,\lambda,0}
\end{equation*}
is uniformly invertible when $r\in[0,r_0]$ for some small $r_0>0$.
Now we note that
\begin{equation*}
\|u\|_{B_{z,\lambda,r}}\sim\|e^{rt/2}u\|_{B_{z,\lambda,0}}
\end{equation*}
uniformly in $z,\lambda$ and $r\in[0,r_0]$ which again follows from the interpolation estimate \eqref{es:interpolation} for $D_t$. This finishes the proof of the lemma.
\end{proof}

In particular, from \eqref{eq:solution}, we see that the inverse of $\mathcal{P}_\lambda(z)$ is given by
\begin{equation*}
\mathcal{E}_\lambda(z)=\left(
                         \begin{array}{ccc}
                           E & K & E_+ \\
                           E_- & K_- & E_{-+} \\
                         \end{array}
                       \right)
:\mathcal{H}_{z,\lambda,r}\to\mathcal{B}_{z,\lambda,r},
\end{equation*}
where
\begin{equation}
\label{model:airye-+}
E_{-+}\in\hom(\mathbb{C}^N,\mathbb{C}^N),\;\;
(E_{-+})_{1\leqslant j,k\leqslant n}=-\eta_j\delta_{ij}.
\end{equation}

\subsection{Dependence on parameters}
Now we shall modify our Grushin problem so that we get nice global symbolic properties. For $0<\delta\ll1$, we put
\begin{equation*}
e_j^{\lambda,\delta}(t)=\Lambda^{1/2}e_j(\Lambda t), \Lambda=\langle\delta\lambda\rangle^{1/2}
\end{equation*}
which also forms an orthonormal basis for $L^2([0,\infty))$. We notice that
\begin{equation*}
\partial_\lambda^k\Lambda=O_k(1)\delta^k\Lambda^{1-2k},\;\;\;
\|\partial_\lambda^ke_j^{\lambda,\delta}\|_{L^2}
=O_k(1)\delta^k\Lambda^{-2k}.
\end{equation*}
In particular,
\begin{equation*}
\|e_j^{\lambda,\delta}-e_j\|_{L^2}\leqslant C\delta|\lambda|.
\end{equation*}
We define $R_+^{\lambda,\delta}$ and $R_-^{\lambda,\delta}$ by replacing $e_j$ with $e_j^{\lambda,\delta}$ in the definition of $R_+^0$ and $R_-^0$, we obtain
\begin{equation}
\label{model:modairy}
\mathcal{P}_\lambda^\delta(z)=\left(
                         \begin{array}{cc}
                           P_\lambda-z & R_-^{\lambda,\delta} \\
                           \gamma_1 & 0 \\
                           R_+^{\lambda,\delta} & 0 \\
                         \end{array}
                       \right):
\mathcal{B}_{z,\lambda,r}\to\mathcal{H}_{z,\lambda,r}
\end{equation}
and
\begin{equation*}
\mathcal{P}_\lambda^\delta(z)-\mathcal{P}_\lambda(z)=\left(
                         \begin{array}{cc}
                           0 & O(|\lambda|\delta) \\
                           0 & 0 \\
                           O(|\lambda|\delta) & 0 \\
                         \end{array}
                       \right):
\mathcal{B}_{z,\lambda,r}\to\mathcal{H}_{z,\lambda,r}
\end{equation*}
Thus for $|\lambda|\delta\ll1$ we get the uniform invertibility of $\mathcal{P}_\lambda^\delta(z)$. To get the same estimate for all $\lambda$, we need to assume
\begin{equation}
|\Re z|\ll\frac{1}{\delta},
\end{equation}
so that $|\lambda|\gg1+|\Re z|$ and we have the invertibility of
$\left(
   \begin{array}{c}
     P_\lambda-z \\
     \gamma_1 \\
   \end{array}
 \right)$ without the correcting terms $R_\pm^{\lambda,\delta}$.
Notice that in such situation $\langle\lambda\rangle\sim\langle\lambda-\Re z\rangle$ with a $\delta$-dependent constant. All our estimates will depend on $\delta$.

\begin{lem}
\label{lem:airyinv}
For $|\lambda|\gg1+|\Re z|$ and $|\Im z|<C_1$, there exists a constant $C>0$ independent of $z$ and $\lambda$ such that for any $u\in B_{z,\lambda,0}$,
\begin{equation}\label{eq:largelambdainv1}
|\langle(P_\lambda-z)u,u\rangle|+\langle\lambda-\Re z\rangle^{-1/2}|\gamma_1u|^2\geqslant C^{-1}\langle\lambda-\Re z\rangle\|u\|_{L^2}^2.
\end{equation}
Furthermore, for small $r$,
\begin{equation}\label{eq:largelambdainv2}
\left(
   \begin{array}{c}
     P_\lambda-z \\
     \gamma_1 \\
   \end{array}
 \right)u=
 \left(
   \begin{array}{c}
     v \\
     v_0 \\
   \end{array}
 \right)\;\;\Rightarrow\;\;
\|u\|_{B_{z,\lambda,r}}\leqslant C(\|v\|_{L^2_r}+\langle\lambda-\Re z\rangle^{1/4}|v_0|).
\end{equation}
\end{lem}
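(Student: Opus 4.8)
The plan is to prove the coercive estimate \eqref{eq:largelambdainv1} first by a direct integration-by-parts argument, exploiting that for $|\lambda|\gg1+|\Re z|$ the operator $P_\lambda-z$ is elliptic in the relevant sense, and then to bootstrap from \eqref{eq:largelambdainv1} to the full mapping estimate \eqref{eq:largelambdainv2}. The key point is that when $|\lambda-\Re z|$ is large, the model operator \eqref{model:easy}, namely $e^{-2\pi i/3}D_t^2+\lambda-z$, dominates; the extra term $e^{-2\pi i/3}\mu t=e^{-2\pi i/3}t$ in $P_\lambda-z$ can be absorbed because $t\geqslant 0$ on $[0,\infty)$ and $e^{-2\pi i/3}$ has a fixed sign pattern in real and imaginary parts.

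\textbf{Step 1: the coercive estimate.} I would compute $\langle(P_\lambda-z)u,u\rangle$ by integrating by parts, exactly as in the derivation of \eqref{id:airy}--\eqref{id:gamma0}. Writing $P_\lambda-z=e^{-2\pi i/3}(D_t^2+t)+(\lambda-z)$, integration by parts gives
\begin{equation*}
\langle(P_\lambda-z)u,u\rangle=e^{-2\pi i/3}\left(\|D_tu\|^2+\|\sqrt{t}\,u\|^2-i\gamma_1u\,\overline{\gamma_0u}\right)+(\lambda-z)\|u\|^2,
\end{equation*}
using $\langle D_t^2u,u\rangle=\|D_tu\|^2-i\,u'(0)\overline{u(0)}$ and $\langle tu,u\rangle=\|\sqrt{t}\,u\|^2$. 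Since $e^{-2\pi i/3}=\tfrac12-\tfrac{\sqrt3}{2}i$ has strictly positive real part and $\lambda$ is real, taking the real part when $\lambda-\Re z$ is large and positive, or the imaginary part when it is large and negative, both yield a lower bound of the form $C^{-1}(\|D_tu\|^2+\|\sqrt t\,u\|^2+|\lambda-\Re z|\,\|u\|^2)$ modulo the boundary term $|\gamma_1u||\gamma_0u|$. That boundary term is controlled by \eqref{es:boundary} rescaled: $|\gamma_0u|\lesssim\langle\lambda-\Re z\rangle^{-3/4}(\|D_t^2u\|+\langle\lambda-\Re z\rangle\|u\|)$ and similarly for $\gamma_1u$; combining with the interpolation estimate \eqref{es:interpolation} and Cauchy--Schwarz, $|\gamma_1u||\gamma_0u|$ is dominated by $\langle\lambda-\Re z\rangle^{-1/2}|\gamma_1u|^2$ plus a small multiple of the good terms. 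This gives \eqref{eq:largelambdainv1}.

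\textbf{Step 2: from coercivity to the mapping estimate, then add the weight.} Given \eqref{eq:largelambdainv1} and the equation $(P_\lambda-z)u=v$, $\gamma_1u=v_0$, I pair with $u$ to get $\langle\lambda-\Re z\rangle\|u\|^2\lesssim \|v\|\,\|u\|+\langle\lambda-\Re z\rangle^{-1/2}|v_0|^2$, which after a Young's inequality yields $\langle\lambda-\Re z\rangle\|u\|_{L^2}\lesssim \|v\|_{L^2}+\langle\lambda-\Re z\rangle^{1/4}|v_0|$. Then, exactly as at the end of the proof of Lemma \ref{lem:airy1}, one recovers control of $\|D_t^2u\|+\|tu\|$ from $(D_t^2+t)u=e^{2\pi i/3}(v-(\lambda-z)u)$ together with the variant $\|u\|_{B_{z,\lambda,0}}\lesssim \|(D_t^2+t)u\|_{L^2}+\langle\lambda-\Re z\rangle\|u\|_{L^2}$, giving \eqref{eq:largelambdainv2} for $r=0$. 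The exponential weight is then handled by the conjugation trick of Lemma \ref{lem:airy2}: $e^{rt/2}(P_\lambda-z)e^{-rt/2}=(P_\lambda-z)+e^{-2\pi i/3}(irD_t-\tfrac14 r^2)$, and by \eqref{es:interpolation} the perturbation is $O(r\langle\lambda-\Re z\rangle^{-1/2})$ on $B_{z,\lambda,0}$; likewise $\gamma_1(e^{-rt/2}-1)=-\tfrac r2\gamma_0$ is small. So for $r\in[0,r_0]$ with $r_0$ small the weighted estimate follows by perturbation, using $\|u\|_{B_{z,\lambda,r}}\sim\|e^{rt/2}u\|_{B_{z,\lambda,0}}$.

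\textbf{Main obstacle.} The delicate point is Step 1: keeping the constants uniform in $\lambda$ and $z$ while simultaneously absorbing the boundary term $|\gamma_1u||\gamma_0u|$ and the term $e^{-2\pi i/3}\|\sqrt t\,u\|^2$ (whose imaginary part has the "wrong" sign when one works with $\Im\langle(P_\lambda-z)u,u\rangle$ for $\lambda-\Re z\to-\infty$). One must check that, because $t\geqslant0$, this term only helps the real part and is harmless for the imaginary part, so that in either regime a clean lower bound survives; and one must verify that the rescaled boundary estimates \eqref{es:boundary} indeed produce the exponent $\langle\lambda-\Re z\rangle^{-1/2}$ matching the claimed $\langle\lambda-\Re z\rangle^{1/2}|\gamma_1u|^2$ weight rather than something weaker. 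The rest is a routine repetition of arguments already carried out for Lemmas \ref{lem:airy1} and \ref{lem:airy2}.
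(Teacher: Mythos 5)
Your Step 2 and Step 3 are fine and follow the same bootstrap/conjugation pattern as the paper, so I focus on Step 1, where your proposal genuinely diverges from the paper's proof. The paper does \emph{not} integrate by parts directly on $u$: it introduces the Poisson operator $K_\lambda$, writes $u=v+u'(0)f_\lambda$ with $v'(0)=0$ (so that $\langle(D_t^2+t)v,v\rangle\geqslant 0$ without any boundary correction), and then controls all cross terms using the rather delicate asymptotic estimate $\|f_\lambda\|_{L^2}=O(|\lambda|^{-3/4})$, which itself is proven by a steepest-descent analysis of the Airy function. Your plan — integrate by parts directly on $u$ and absorb the boundary term $|\gamma_1 u||\gamma_0 u|$ via \eqref{es:interpolation} and a weighted Young inequality — is more elementary and avoids the Airy asymptotics entirely. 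That part of your strategy is sound: from $|\gamma_0 u|^2\leqslant 2\|D_tu\|\|u\|$ one gets $\langle\lambda-\Re z\rangle^{1/2}|\gamma_0u|^2\leqslant\|D_tu\|^2+\langle\lambda-\Re z\rangle\|u\|^2$, so $|\gamma_1u||\gamma_0u|\leqslant\epsilon(\|D_tu\|^2+\langle\lambda-\Re z\rangle\|u\|^2)+C_\epsilon\langle\lambda-\Re z\rangle^{-1/2}|\gamma_1u|^2$, which is exactly what you need for absorption.

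However, there is a concrete error that breaks your argument as written. You state $e^{-2\pi i/3}=\tfrac12-\tfrac{\sqrt3}{2}i$, but in fact $e^{-2\pi i/3}=-\tfrac12-\tfrac{\sqrt3}{2}i$, so $\Re(e^{-2\pi i/3})=-\tfrac12<0$. Consequently, for $\lambda-\Re z\gg0$ the real part of $\langle(P_\lambda-z)u,u\rangle$ has the term $-\tfrac12(\|D_tu\|^2+\|\sqrt t\,u\|^2)$ with the \emph{wrong} sign, and for $\lambda-\Re z\ll0$ the imaginary part carries \emph{no} $\lambda$-dependence at all (since $\lambda\in\mathbb{R}$), so neither branch of your dichotomy yields the lower bound $\langle\lambda-\Re z\rangle\|u\|^2$. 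The fix is the same phase rotation the paper uses: for $\lambda>0$ bound $|\langle(P_\lambda-z)u,u\rangle|$ below by $\Re\bigl(e^{\pi i/3}\langle(P_\lambda-z)u,u\rangle\bigr)$, which turns the prefactor into $e^{-\pi i/3}$ (real part $+\tfrac12$) and makes $\Re\bigl(e^{\pi i/3}(\lambda-z)\bigr)\sim\tfrac12(\lambda-\Re z)$; for $\lambda<0$ use $\Re\bigl(-\langle(P_\lambda-z)u,u\rangle\bigr)$, which replaces $e^{-2\pi i/3}$ by $e^{\pi i/3}$ (real part $+\tfrac12$) and makes the $(\lambda-z)$-coefficient $\sim|\lambda-\Re z|$. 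With this replacement for "take the real/imaginary part," your absorption argument closes and you recover \eqref{eq:largelambdainv1} with a constant that may be larger than $1$ on the $|\gamma_1u|^2$ term, which is harmless after rescaling.
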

\begin{proof}
It is possible to repeat the argument as in Lemma \ref{lem:airy1} using orthogonal expansion with respect to $(e_j)$. We present here another proof by using the Poisson operator $K_\lambda:\mathbb{C}\to B_{z,\lambda,0}$, satisfying
\begin{equation*}
P_\lambda K_\lambda=0,\;\;\; \gamma_1K_\lambda=\Id.
\end{equation*}
This Poisson operator is given by multiplying $f=f_\lambda$ which is the solution to the equation
\begin{equation*}
e^{-2\pi i/3}(D_t^2+t)f+\lambda f=0,\;\;\; f'(0)=1.
\end{equation*}
We can give an explicit expression of $f$ in terms of the Airy function:
\begin{equation*}
f_\lambda(t)=\Ai'(e^{2\pi i/3}\lambda)^{-1}\Ai(t+e^{2\pi i/3}\lambda).
\end{equation*}
Notice that all the zeroes of $\Ai$ and $\Ai'$ lie on the negative real axis, this expression is well-defined as $\lambda$ is real.

We shall apply the asymptotic formulas for Airy function and its derivatives \eqref{asy:Airy-com} to study the $L^2$-norm of $f_\lambda$. First we consider the case $\lambda>0$, then
\begin{equation*}
\Ai'(e^{2\pi i/3}\lambda)=-(2\sqrt{\pi})^{-1}e^{\pi i/6}e^{\lambda^{3/2}}\lambda^{1/4}(1+O(\lambda^{-3/2})).
\end{equation*}
and
\begin{equation*}
\Ai(t+e^{2\pi i/3}\lambda)
=(2\sqrt{\pi})^{-1}e^{-\zeta}z^{-1/4}(1+O(|\zeta|^{-1}))
\end{equation*}
where
\begin{equation*}
z=t+e^{2\pi i/3}\lambda,\;\;\; |z|=(t^2-t\lambda+\lambda^2)^{1/2},\;\;\; \zeta=\frac{2}{3}z^{3/2}.
\end{equation*}
We change variables by letting $\arg z=\frac{\pi}{2}-\theta$, then $\theta\in[-\frac{\pi}{6},\frac{\pi}{2})$ and
\begin{equation*}
t=\frac{\lambda}{2}+\frac{\sqrt{3}}{2}\lambda\tan\theta,\;\;\; |z|=\frac{\sqrt{3}}{2}\lambda\sec\theta,\;\;\;
\zeta=\frac{\sqrt{3}}{4}\lambda^{3/2}e^{i(3\pi/4-3\theta/2)}
\sec^{3/2}\theta.
\end{equation*}
We have the following uniform asymptotic formulas in $\lambda$ and $\theta$ for $f_\lambda(t)$:
\begin{equation*}
f_\lambda(t)=g(\lambda)e^{\lambda^{3/2}\psi(\theta)}
e^{-i(7\pi/24-\theta/4)}(\sec^{-1/4}\theta)
(1+O(\lambda^{-3/2}\sec^{-3/2}\theta)).
\end{equation*}
where
\begin{equation*}
g(\lambda)=(\sqrt{3}/2)^{-1/4}\lambda^{-1/2}(1+O(\lambda^{-3/2})),\;\;\;
\psi(\theta)=-\frac{2}{3}-\frac{\sqrt{3}}{4}e^{i(3\pi/4-3\theta/2)}
\sec^{3/2}\theta.
\end{equation*}
Therefore
\begin{equation*}
\|f_\lambda\|_{L^2(0,\infty)}^2
=\frac{\sqrt{3}}{2}\lambda|g(\lambda)|^2
\int_{-\pi/6}^{\pi/2}
e^{\lambda^{3/2}\varphi(\theta)}
(\sec^{3/2}\theta)(1+O(\lambda^{-3/2}\sec^{-3/2}\theta))d\theta,
\end{equation*}
where
\begin{equation*}
\varphi(\theta)=2\Re\psi(\theta)=2\left[-\frac{2}{3}
-\frac{\sqrt{3}}{4}\sec^{3/2}\theta
\cos(\frac{3\pi}{4}-\frac{3\theta}{2})\right]
\end{equation*}
satisfies
\begin{equation*}
\varphi(-\pi/6)=0,\;\;\; \lim_{\theta\to\pi/2-0}\varphi(\theta)=-\infty,
\end{equation*}
and
\begin{equation*}
\varphi'(\theta)=-\frac{3\sqrt{3}}{4}\sec^{5/2}\theta
\sin(\frac{3\pi}{4}-\frac{\theta}{2})<-\frac{3\sqrt{3}}{8}<0, \;\;\; \theta\in[-\frac{\pi}{6},\frac{\pi}{2}).
\end{equation*}
Therefore integration by part gives us
\begin{equation}
\|f_\lambda\|=O(\lambda^{-3/4}).
\end{equation}
Now for every $u\in B_{z,\lambda,0}$, let $v=u-K_\lambda(\gamma_1u)=u-u'(0)f_\lambda$, we have $v'(0)=0$. Now we can write
\begin{equation*}
\begin{split}
\langle(P_\lambda-z)u,u\rangle=\langle(P_\lambda-z)v,v\rangle+
\overline{\gamma_1u}\langle(P_\lambda-z)v,f_\lambda\rangle\\
-z(\gamma_1u)\langle
f_\lambda,v\rangle-z|u'(0)|^2\|f_\lambda\|^2_{L^2}.
\end{split}
\end{equation*}
For the second term on the right-hand side, we integrate by parts:
\begin{equation*}
\begin{split}
\langle(P_\lambda-z)v,f_\lambda\rangle=&\;-e^{-2\pi i/3}v(0)
+\langle v,(P_\lambda-z)^\ast f_\lambda\rangle\\
=&\;-e^{-2\pi i/3}v(0)+(\lambda(1-e^{2\pi i/3})-\bar{z})\langle v,f_\lambda\rangle.
\end{split}
\end{equation*}
Therefore
\begin{equation*}
\begin{split}
\langle(P_\lambda-z)u,u\rangle=&\;e^{-2\pi i/3}\langle(D_t^2+t)v,v\rangle+(\lambda-z)\|v\|^2
-e^{-2\pi i/3}(\overline{\gamma_1u})v(0)\\
&\;+\overline{\gamma_1u}(\lambda(1-e^{2\pi i/3})-\bar{z})\langle v,f_\lambda\rangle-z(\gamma_1u)\langle f_\lambda,v\rangle-z|\gamma_1u|^2\|f_\lambda\|^2_{L^2},
\end{split}
\end{equation*}
where we notice that $\langle(D_t^2+t)v,v\rangle$ is always nonnegative. This gives
\begin{equation*}
\begin{split}
|\langle(P_\lambda-z)u,u\rangle|\geqslant&\;\Re(e^{\pi i/3}\langle(P_\lambda-z)u,u\rangle\\
\geqslant&\;\frac{1}{2}\langle(D_t^2+t)v,v\rangle
+C^{-1}\langle\lambda-\Re z\rangle\|v\|^2-\epsilon\langle\lambda-z\rangle^{1/2}|v(0)|^2\\
&\;-\epsilon\langle\lambda-\Re z\rangle\|v\|^2
-O_\epsilon(\langle\lambda-z\rangle^{-1/2})|\gamma_1u|^2
\end{split}
\end{equation*}
Now by choosing $\epsilon$ small enough but fixed and using
\begin{equation*}
\langle\lambda-z\rangle^{1/2}|v(0)|^2\leqslant
2\langle\lambda-z\rangle^{1/2}\|D_tv\|\|v\|
\leqslant\|D_tv\|^2+\langle\lambda-\Re z\rangle\|v\|^2
\end{equation*}
and $\langle(D_t^2+t)v,v\rangle\geqslant\|D_tv\|^2$ to deduce that
\begin{equation*}
|\langle(P_\lambda-z)u,u\rangle|\geqslant
C^{-1}\langle\lambda-\Re z\rangle\|v\|^2-C\langle\lambda-\Re z\rangle^{-1/2}|\gamma_1u|^2
\end{equation*}
by $\|u\|^2\leqslant C(\|v\|^2+\langle\lambda-\Re z\rangle^{-3/2}|\gamma_1u|^2)$, we can conclude the proof of \eqref{eq:largelambdainv1} for $\lambda>0$. For $\lambda<0$, we can get similarly $\|f_\lambda\|=O(|\lambda|^{-3/4})$ and then use
\begin{equation*}
|\langle(P_\lambda-z)u,u\rangle|\geqslant
\Re(-\langle(P_\lambda-z)u,u\rangle)
\end{equation*}
to reproduce the argument above and prove \eqref{eq:largelambdainv1}.

Now we prove \eqref{eq:largelambdainv2}. For $r=0$, we can see from
\eqref{eq:largelambdainv1},
\begin{equation*}
\|u\|^2_{L^2}\leqslant C\langle\lambda-\Re z\rangle^{-1}\|(P_\lambda-z)u\|_{L^2}\|u\|_{L^2}+C\langle\lambda-\Re z\rangle^{-3/2}|\gamma_1u|^2.
\end{equation*}
Therefore
\begin{equation*}
\|u\|_{L^2}\leqslant C\langle\lambda-\Re z\rangle^{-1}\|(P_\lambda-z)u\|+C\langle\lambda-\Re z\rangle^{-3/4}|\gamma_1u|^2
\end{equation*}
which proves \eqref{eq:largelambdainv2} for $r=0$. For small $r$, we can simply repeat the conjugation and perturbation argument as in the \ref{lem:airy1} to conclude the uniform invertibility.
\end{proof}

Now we give the desired invertibility for the full operator in the Grushin problem.
\begin{prop}
\label{prop:airy}
For $|\lambda|\geqslant1/(C\delta)$ and $|\Re z|\ll1/\delta$, $r\in[0,r_0]$ with $r_0>0$ small enough,
\begin{equation}
\label{es:grushin}
\mathcal{P}_\lambda^\delta\left(
                            \begin{array}{c}
                              u \\
                              u_- \\
                            \end{array}
                          \right)=
                          \left(
                            \begin{array}{c}
                              v \\
                              v_0 \\
                              v_+ \\
                            \end{array}
                          \right)\;\;\Rightarrow\;\;
\left\|\left(
         \begin{array}{c}
           u \\
           u_- \\
         \end{array}
       \right)
\right\|_{\mathcal{B}_{z,\lambda,r}}\leqslant C
\left\|\left(
         \begin{array}{c}
           v \\
           v_0 \\
           v_+
         \end{array}
       \right)
\right\|_{\mathcal{H}_{z,\lambda,r}}.
\end{equation}
Moreover, we have the following mapping properties of $\mathcal{P}_\lambda^\delta(z)$ and its inverse $\mathcal{E}_\lambda^\delta(z)$:
\begin{equation}
\label{pro:mapping}
\begin{split}
\|\partial_\lambda^k\mathcal{P}_\lambda^\delta(z)
\|_{\mathcal{L}(\mathcal{B}_{z,\lambda,r},\mathcal{H}_{z,\lambda,r})}
\leqslant&\; C_k\langle\lambda-\Re z\rangle^{-k},\\\|\partial_\lambda^k\mathcal{E}_\lambda^\delta(z)
\|_{\mathcal{L}(\mathcal{H}_{z,\lambda,r},\mathcal{B}_{z,\lambda,r})}
\leqslant&\; C_k\langle\lambda-\Re z\rangle^{-k}.
\end{split}
\end{equation}
\end{prop}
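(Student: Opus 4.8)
The plan is to combine the two model lemmas already established --- Lemma \ref{lem:airy2} (well-posedness of $\mathcal{P}_\lambda(z)$ near the glancing region, with the exponential weight) and Lemma \ref{lem:airyinv} (well-posedness of the bare operator $(P_\lambda-z,\gamma_1)$ for $|\lambda|\gg1+|\Re z|$) --- with the perturbative estimate $\mathcal{P}_\lambda^\delta(z)-\mathcal{P}_\lambda(z)=O(|\lambda|\delta)$ on $\mathcal{B}_{z,\lambda,r}\to\mathcal{H}_{z,\lambda,r}$ recorded just before the statement. First I would establish \eqref{es:grushin}. When $|\lambda|\delta\ll1$, the term $O(|\lambda|\delta)$ is a genuinely small perturbation of $\mathcal{P}_\lambda(z)$, so uniform invertibility of $\mathcal{P}_\lambda^\delta(z)$ follows from Lemma \ref{lem:airy2} by a Neumann series. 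When $|\lambda|\delta\gtrsim1$, the hypothesis $|\Re z|\ll1/\delta$ forces $|\lambda|\gg1+|\Re z|$, so $N=0$ in the construction (no eigenvalues $\eta_j$ can be small), the correcting operators $R_\pm^{\lambda,\delta}$ are absent, and \eqref{eq:largelambdainv2} of Lemma \ref{lem:airyinv} gives exactly the bound \eqref{es:grushin}. Since these two regimes overlap (both hold on a range $c\delta\leqslant|\lambda|\delta\leqslant C$), the constant $C$ in \eqref{es:grushin} can be chosen uniform, albeit $\delta$-dependent; and in both regimes $\langle\lambda\rangle\sim\langle\lambda-\Re z\rangle$ with a $\delta$-dependent constant, so the weights in $\mathcal{B}_{z,\lambda,r}$ and $\mathcal{H}_{z,\lambda,r}$ are interchangeable up to such constants. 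This handles the invertibility and thereby the existence of $\mathcal{E}_\lambda^\delta(z)$.

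Next I would prove the mapping bounds \eqref{pro:mapping}. The estimate on $\partial_\lambda^k\mathcal{P}_\lambda^\delta(z)$ is a direct computation: the $\lambda$-dependence of $\mathcal{P}_\lambda^\delta(z)$ enters through the scalar $\lambda$ in $P_\lambda-z$ (one derivative, then zero), through the weights $\langle\lambda-\Re z\rangle$ defining the norms on the source and target spaces, and through $R_\pm^{\lambda,\delta}$ via $e_j^{\lambda,\delta}(t)=\Lambda^{1/2}e_j(\Lambda t)$ with $\Lambda=\langle\delta\lambda\rangle^{1/2}$. The bounds $\partial_\lambda^k\Lambda=O_k(1)\delta^k\Lambda^{1-2k}$ and $\|\partial_\lambda^k e_j^{\lambda,\delta}\|_{L^2}=O_k(1)\delta^k\Lambda^{-2k}$ quoted in the text, together with $\Lambda^2=\langle\delta\lambda\rangle\sim\delta\langle\lambda-\Re z\rangle$ in the relevant range, give each $\partial_\lambda^k$ a gain of $\langle\lambda-\Re z\rangle^{-k}$ (the $\delta$-powers are absorbed into $C_k$). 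One must also differentiate the $\langle\lambda-\Re z\rangle$-prefactors in the norms \eqref{model:airyspace}, \eqref{norm:weighted}; each differentiation costs $\langle\lambda-\Re z\rangle^{-1}$, consistent with the claim. For the inverse, I would differentiate the identity $\mathcal{E}_\lambda^\delta(z)\mathcal{P}_\lambda^\delta(z)=\Id$ repeatedly, obtaining
\begin{equation*}
\partial_\lambda^k\mathcal{E}_\lambda^\delta(z)=-\sum_{\substack{j_1+\cdots+j_m=k\\ j_i\geqslant1,\ m\geqslant1}} c_{j_1,\dots,j_m}\,\mathcal{E}_\lambda^\delta(z)\bigl(\partial_\lambda^{j_1}\mathcal{P}_\lambda^\delta(z)\bigr)\mathcal{E}_\lambda^\delta(z)\cdots\bigl(\partial_\lambda^{j_m}\mathcal{P}_\lambda^\delta(z)\bigr)\mathcal{E}_\lambda^\delta(z),
\end{equation*}
and then estimating by induction on $k$: the $k=0$ case is \eqref{es:grushin}, and each factor $\partial_\lambda^{j_i}\mathcal{P}_\lambda^\delta(z)$ contributes $\langle\lambda-\Re z\rangle^{-j_i}$ while each $\mathcal{E}_\lambda^\delta(z)$ contributes $O(1)$, so the product is $O(\langle\lambda-\Re z\rangle^{-k})$ as claimed.

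The one point requiring genuine care --- the part I expect to be the main obstacle --- is the uniformity of all constants with respect to $z$ (for $|\Im z|<C_1$, $|\Re z|\ll1/\delta$) in the \emph{overlap} region $c\leqslant|\lambda|\delta\leqslant C$, where neither argument is quite canonical: Lemma \ref{lem:airy2} gives a constant uniform in $z,\lambda$ but its perturbation series converges only once $|\lambda|\delta$ is small enough, while Lemma \ref{lem:airyinv} needs $|\lambda|$ large compared to $|\Re z|$, which in the overlap is only a $\delta$-dependent statement. The resolution is to note that in the overlap $\langle\lambda-\Re z\rangle$ is comparable to $1/\delta$, a fixed quantity once $\delta$ is fixed, so the relevant operators form a compact family and the invertibility, already known pointwise, is uniform; making this rigorous amounts to patching the two lemmas with a partition of unity in $\lambda$ and checking that the transition does not destroy the weighted estimates --- which it does not, precisely because $\langle\lambda\rangle\sim\langle\lambda-\Re z\rangle$ throughout. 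I would also remark explicitly that all constants $C,C_k$ depend on $\delta$ (and on $C_1$), as the statement of the proposition implicitly allows.
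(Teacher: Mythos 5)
There is a genuine gap in your handling of the regime $|\lambda|\delta\gtrsim1$, which is precisely the regime the proposition addresses (the hypothesis is $|\lambda|\geqslant1/(C\delta)$). You claim that ``$N=0$ in the construction (no eigenvalues $\eta_j$ can be small), the correcting operators $R_\pm^{\lambda,\delta}$ are absent, and \eqref{eq:largelambdainv2} of Lemma~\ref{lem:airyinv} gives exactly the bound \eqref{es:grushin}.'' Both claims are wrong. First, $N=N(C_1)$ is a fixed integer determined by $C_1$ alone --- it is the number of $j$ with $|\Im e^{-2\pi i/3}\zeta_j'|\leqslant C_1$ --- and does not shrink as $|\lambda|$ grows. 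Second, the operator $\mathcal{P}_\lambda^\delta(z)$ in \eqref{model:modairy} always carries the $R_\pm^{\lambda,\delta}$ blocks by definition; what is true for $|\lambda|\gg1+|\Re z|$ is merely that the reduced operator $\bigl(\begin{smallmatrix}P_\lambda-z\\\gamma_1\end{smallmatrix}\bigr)$ is already invertible \emph{without} them, not that they disappear from $\mathcal{P}_\lambda^\delta$. Consequently Lemma~\ref{lem:airyinv} cannot give \eqref{es:grushin} ``exactly'': \eqref{eq:largelambdainv2} controls only $\|u\|_{B_{z,\lambda,r}}$ in terms of $\|v\|_{L^2_r}$ and $|v_0|$, but says nothing about the $|u_-|$ term on the left or the $\langle\lambda-\Re z\rangle|v_+|$ term on the right of \eqref{es:grushin}, nor does it account for the $R_-u_-$ contribution appearing in the first equation of the Grushin system.

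The missing idea is the paper's projection argument: set $\Pi=R_-^{\lambda,\delta}R_+^{\lambda,\delta}$, the orthogonal projection onto $\mathrm{span}\{e_j^{\lambda,\delta}\}_{j\leqslant N}$, and split $u=\Pi u+(I-\Pi)u$. One has $\|\Pi u\|\leqslant|v_+|$ and $\|(P_\lambda-z)\Pi u\|=O(\langle\lambda-\Re z\rangle)|v_+|$ from the explicit bounds on $D_t^2e_j^{\lambda,\delta}$ and $te_j^{\lambda,\delta}$; the quadratic-form estimate \eqref{eq:largelambdainv1} of Lemma~\ref{lem:airyinv} is then applied to $(I-\Pi)u$ (using $\gamma_1\Pi=0$ and $(I-\Pi)R_-=0$) to control $\|(I-\Pi)u\|$; and finally $u_-=R_+v-R_+(P_\lambda-z)u$ is estimated via integration by parts, using $(P_\lambda-z)^\ast e_j^{\lambda,\delta}=O(\langle\lambda-\Re z\rangle)$ and $e_j^{\lambda,\delta}(0)=O(\langle\delta\lambda\rangle^{1/4})$. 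Without this (or an equivalent Schur-complement argument establishing invertibility of the effective Hamiltonian $E_{-+}^\delta$), your proof does not close. The remainder of your proposal --- the extension to $r\in[0,r_0]$ by conjugation as in Lemma~\ref{lem:airy2}, and the derivation of \eqref{pro:mapping} by explicit computation of $\partial_\lambda^k\mathcal{P}_\lambda^\delta$ followed by repeated differentiation of $\mathcal{E}_\lambda^\delta\mathcal{P}_\lambda^\delta=\Id$ --- is correct and matches the paper's route.
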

\begin{proof}
Again, we start with $r=0$. Let
\begin{equation*}
\Pi=R_-R_+:L^2\to(\ker R_+)^\perp=\Image R_-=\bigoplus\limits_{j=1}^N\mathbb{C}e_j^{\lambda,\delta}
\end{equation*}
be the orthogonal projection. Then since
\begin{equation*}
\|D_t^2e_j^{\lambda,\delta}\|_{L^2}=O(\langle\delta\lambda\rangle),\;\;\;
\|te_j^{\lambda,\delta}\|_{L^2}=O(\langle\delta\lambda\rangle^{-1/2}),
\end{equation*}
we have $\|(P_\lambda-z)|_{\Image R_-}\|=O(\langle\lambda-\Re z\rangle)$. Also it is easy to see $\|R_+\|=\|R_-\|=1$. Since $\Pi u=R_-R_+u=R_-v_+$, we have
\begin{equation*}
\|\Pi u\|_{L^2}\leqslant|v_+|
\end{equation*}
and
\begin{equation}
\label{es:piu}
\|(P_\lambda-z)\Pi u\|_{L^2}\leqslant O(\langle\lambda-\Re z\rangle)|v_+|.
\end{equation}
On the other hand, by the previous lemma,
\begin{equation*}
\begin{split}
\|(I-\Pi)u\|^2_{L^2}\leqslant&\;C\langle\lambda-\Re z\rangle^{-1}|\langle(P_\lambda-z)(I-\Pi)u,(I-\Pi)u\rangle|\\
&\;\;\;\;+C\langle\lambda-\Re z\rangle^{-3/2}|\gamma_1(I-\Pi)u|^2\\
\end{split}
\end{equation*}
For the first term, we have
\begin{equation*}
\begin{split}
\langle(P_\lambda-z)(I-\Pi)u,(I-\Pi)u\rangle=&\;
\langle(I-\Pi)(P_\lambda-z)(I-\Pi)u,u\rangle\\
=&\;\langle(I-\Pi)(P_\lambda-z)u,u\rangle-
\langle(I-\Pi)(P_\lambda-z)\Pi u,u\rangle\\
=&\;\langle(I-\Pi)(v-R_-u_-),u\rangle-
\langle(P_\lambda-z)\Pi u,(I-\Pi)u\rangle\\
=&\;\langle(I-\Pi)v,u\rangle-
\langle(P_\lambda-z)\Pi u,(I-\Pi)u\rangle\\
=&\;\langle v,(I-\Pi)u\rangle-
\langle(P_\lambda-z)\Pi u,(I-\Pi)u\rangle.\\
\end{split}
\end{equation*}
For the second term, we use $\gamma_1\Pi=0$ to get
\begin{equation*}
\gamma_1(I-\Pi)u=\gamma_1u=v_0.
\end{equation*}
Therefore
\begin{equation*}
\begin{split}
\|(I-\Pi)u\|_{L^2}^2
\leqslant&\;C\langle\lambda-\Re z\rangle^{-1}(\|v\|_{L^2}+\|(P_\lambda-z)\Pi u\|_{L^2})\|(I-\Pi)u\|\\
&\;\;\;\;+C\langle\lambda-\Re z\rangle^{-3/2}|v_0|\\
\end{split}
\end{equation*}
and thus
\begin{equation}
\label{es:ipiu}
\begin{split}
\|(I-\Pi)u\|_{L^2}
\leqslant&\;C\langle\lambda-\Re z\rangle^{-1}(\|v\|_{L^2}+\|(P_\lambda-z)\Pi u\|_{L^2})
+C\langle\lambda-\Re z\rangle^{-3/4}|v_0|\\
\leqslant&\;C\langle\lambda-\Re z\rangle^{-1}\|v\|_{L^2}+|v_+|
+C\langle\lambda-\Re z\rangle^{-3/4}|v_0|.
\end{split}
\end{equation}
Combining \eqref{es:piu} and \eqref{es:ipiu}, we have
\begin{equation*}
\langle\lambda-\Re z\rangle\|u\|_{L^2}\leqslant C(\|v\|_{L^2_r}
+\langle\lambda-\Re z\rangle^{1/4}|v_0|
+\langle\lambda-\Re z\rangle|v_+|.).
\end{equation*}
Since
\begin{equation*}
u_-=R_+R_-u_-=R_+(v-(P_\lambda-z)u)=R_+v-R_+(P_\lambda-z)u,
\end{equation*}
we have
\begin{equation*}
|u_-|\leqslant\|v\|_{L^2}+\|R_+(P_\lambda-z)u\|_{L^2}
\leqslant\|v\|_{L^2}+C\sum_{j=1}^N
|\langle(P_\lambda-z)u,e_j^{\lambda,\delta}\rangle|.
\end{equation*}
To estimate the sum, we integrate by parts and get
\begin{equation*}
\langle(P_\lambda-z)u,e_j^{\lambda,\delta}\rangle
=\langle u,(P_\lambda-z)^\ast e_j^{\lambda,\delta}\rangle
+e^{-2\pi i/3}u'(0)e_j^{\lambda,\delta}(0).
\end{equation*}
where $(P_\lambda-z)^\ast=e^{2\pi i/3}(D_t^2+t)+\lambda-\bar{z}$ is the formal adjoint of $P_\lambda-z$ so
\begin{equation*}
\|(P_\lambda-z)^\ast e_j^{\lambda,\delta}\|_{L^2}=O(\langle\lambda-\Re z\rangle).
\end{equation*}
In addition, we have $u'(0)=v_0$ and by definition of $e_j^{\lambda,\delta}$,
\begin{equation*}
e_j^{\lambda,\delta}(0)=O(\langle\delta\lambda\rangle^{1/4}),
\end{equation*}
which shows that
\begin{equation*}
|\langle(P_\lambda-z)u,e_j^{\lambda,\delta}\rangle|
\leqslant C\langle\lambda-\Re z\rangle\|u\|+C\langle\lambda-\Re z\rangle^{1/4}|v_0|.
\end{equation*}
As a consequence,
\begin{equation*}
|u_-|\leqslant C(\|v\|_{L^2_r}
+\langle\lambda-\Re z\rangle^{1/4}|v_0|
+\langle\lambda-\Re z\rangle|v_+|).
\end{equation*}
Now as in Lemma \ref{lem:airy1}, we can use the equation $(P_\lambda-z)u=v-R_-u_-$ to give the estimates on the $L^2$ norm of $D_t^2u$ and $tu$. This finishes the proof of \eqref{es:grushin} for $r=0$.

To extend this to $r\in[0,r_0]$ for some small $r_0>0$, we notice that \begin{equation*}
\|(e^{\pm rt/2}-1)e_j^{\lambda,\delta}\|=\|(e^{\pm r\langle\delta\lambda\rangle^{-1/2}t/2}-1)e_j\|=o(1)
\end{equation*}
uniformly as $r\to0$ which allow us to repeat the argument in Lemma \ref{lem:airy2}.

Finally, since for $k>1$,
\begin{equation*}
\partial_\lambda^k\mathcal{P}_\lambda^\delta(z)=
\left(
  \begin{array}{cc}
    \delta_{1k} & \partial_\lambda^kR^{\lambda,\delta}_+ \\
    0 & 0 \\
    \partial_\lambda^kR^{\lambda,\delta}_- & 0 \\
  \end{array}
\right)
\end{equation*}
and
\begin{equation*}
\|\partial_\lambda^ke_j^{\lambda,\delta}\|_{L^2_r}
=O_k(1)\delta^k\langle\delta\lambda\rangle^{-k}
=O_k(1)\langle\lambda-\Re z\rangle^{-k},
\end{equation*}
we get the mapping properties of $\mathcal{P}_\lambda^\delta(z)$ in \eqref{pro:mapping}. For its inverse $\mathcal{E}_\lambda^\delta(z)$, \eqref{es:grushin} gives the mapping property when $k=0$. The case $k>0$ follows directly from the case $k=0$ and the Leibnitz rule.
\end{proof}

To end this part, we study the $(-+)$-component of $\mathcal{E}_\lambda^\delta$:
\begin{prop}
\label{prop:airy:e-+}
For any $\epsilon>0$, $|\lambda|\leqslant1/(C\sqrt{\delta}),|\Re z|\ll1/\sqrt{\delta}$ sufficiently small depending on $\epsilon$,
\begin{equation}
\label{pro:e-+:perturb}
\|E_{-+}^\delta(z,\lambda)-\diag(z-\lambda-e^{-2\pi i/3}\zeta_j')\|
\leqslant\epsilon
\end{equation}
and if $\det E_{-+}^\delta(z,\lambda)=0$, then
\begin{equation}
\label{pro:e-+:zero}
z=\lambda+e^{-2\pi i/3}\zeta_j'.
\end{equation}
Moreover, for $|\lambda|\gg1+|\Re z|$,
\begin{equation}
\label{pro:e-+:inverse}
\|E_{-+}^\delta(z,\lambda)^{-1}
\|_{\mathcal{L}(\mathbb{C}^N,\mathbb{C}^N)}
=O(\langle\lambda-\Re z\rangle^{-1})
\end{equation}
\end{prop}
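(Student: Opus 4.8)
The plan is to read all three statements off the abstract Grushin dictionary for the problem \eqref{model:modairy}, using two facts already established in this section. First, the explicit diagonal form \eqref{model:airye-+}: for the \emph{undeformed} Grushin problem $\mathcal{P}_\lambda(z)$ (the one built from $R^0_\pm$) the $(-+)$-block of the inverse is $E_{-+}=\diag(-\eta_j)=\diag(z-\lambda-e^{-2\pi i/3}\zeta'_j)$. Second, the observation made just after \eqref{model:airycomb}: the Neumann boundary operator $A=A_\lambda(z):=\left(\begin{smallmatrix}P_\lambda-z\\ \gamma_1\end{smallmatrix}\right)$ (the top two rows of $\mathcal{P}_\lambda^\delta(z)$, mapping $B_{z,\lambda,r}$ into $L^2_r\times\mathbb{C}_{\langle\lambda-\Re z\rangle^{1/4}}$) is invertible exactly when $z\neq\lambda+e^{-2\pi i/3}\zeta'_j$ for all $j$, and within $|\Im z|<C_1$ the exceptional $j$ must lie in $\{1,\dots,N\}$. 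Grouping the first two components of the target together, $\mathcal{P}_\lambda^\delta(z)$ becomes a two-by-two block operator with diagonal blocks $A$ and $0$ and off-diagonal blocks $\left(\begin{smallmatrix}R_-^{\lambda,\delta}\\ 0\end{smallmatrix}\right)$ (upper right) and $R_+^{\lambda,\delta}$ (lower left), so the standard Grushin identities give: $E_{-+}^\delta$ is invertible iff $A$ is (whenever $\mathcal{P}_\lambda^\delta(z)$ is invertible), and, once $A$ is invertible, $(E_{-+}^\delta)^{-1}=-R_+^{\lambda,\delta}\,A^{-1}\left(\begin{smallmatrix}R_-^{\lambda,\delta}\\ 0\end{smallmatrix}\right)$. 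I shall use throughout that $\mathcal{P}_\lambda^\delta(z)$ is invertible on the ranges considered: this is Proposition \ref{prop:airy} when $|\lambda|\gg1+|\Re z|$ (whose proof, via Lemma \ref{lem:airyinv}, in fact already applies there), together with the perturbation bound in the next paragraph for the remaining range $|\lambda|\lesssim1+|\Re z|$, where $|\lambda|\delta\ll1$.

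For \eqref{pro:e-+:perturb}: in the stated range $\mathcal{P}_\lambda^\delta(z)-\mathcal{P}_\lambda(z)$ is the operator displayed after \eqref{model:modairy}, supported in the $R_\pm$-entries, with norm $O(|\lambda|\delta)=O(\sqrt{\delta})$ from $\mathcal{B}_{z,\lambda,r}$ to $\mathcal{H}_{z,\lambda,r}$ --- only the fixed, finitely many indices $j\leqslant N$ enter, and one uses $\|e_j^{\lambda,\delta}-e_j\|_{L^2_r}\leqslant C\delta|\lambda|$ together with the shape of the weighted norms. Since $\mathcal{P}_\lambda(z)$ is invertible with $\|\mathcal{E}_\lambda(z)\|=O(1)$ uniformly in $\lambda,z$ and $r$ (Lemmas \ref{lem:airy1}--\ref{lem:airy2}), a Neumann series produces both the invertibility of $\mathcal{P}_\lambda^\delta(z)$ here and the estimate $\|\mathcal{E}_\lambda^\delta(z)-\mathcal{E}_\lambda(z)\|=O(\sqrt{\delta})$; restricting to the $(-+)$-block and invoking \eqref{model:airye-+} gives $\|E_{-+}^\delta(z,\lambda)-\diag(z-\lambda-e^{-2\pi i/3}\zeta'_j)\|=O(\sqrt{\delta})$, which is $\leqslant\epsilon$ as soon as $\delta$ is small depending on $\epsilon$.

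For \eqref{pro:e-+:zero}: since $\mathcal{P}_\lambda^\delta(z)$ is invertible in the range at hand, $\det E_{-+}^\delta(z,\lambda)=0$ forces $A$ to be non-invertible, hence $z=\lambda+e^{-2\pi i/3}\zeta'_j$ for some $j$, necessarily $1\leqslant j\leqslant N$. For \eqref{pro:e-+:inverse}: when $|\lambda|\gg1+|\Re z|$, Lemma \ref{lem:airyinv} gives $A^{-1}=O(1):L^2_r\times\mathbb{C}_{\langle\lambda-\Re z\rangle^{1/4}}\to B_{z,\lambda,r}$; moreover $\left(\begin{smallmatrix}R_-^{\lambda,\delta}\\ 0\end{smallmatrix}\right)=O(1):\mathbb{C}^N\to L^2_r\times\mathbb{C}_{\langle\lambda-\Re z\rangle^{1/4}}$ (only the fixed $j\leqslant N$ occur), and since $|\langle u,e_j^{\lambda,\delta}\rangle|\leqslant\|u\|_{L^2_r}\leqslant\langle\lambda-\Re z\rangle^{-1}\|u\|_{B_{z,\lambda,r}}$ one also has $R_+^{\lambda,\delta}=O(1):B_{z,\lambda,r}\to\mathbb{C}^N_{\langle\lambda-\Re z\rangle}$. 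Composing these in $(E_{-+}^\delta)^{-1}=-R_+^{\lambda,\delta}\,A^{-1}\left(\begin{smallmatrix}R_-^{\lambda,\delta}\\ 0\end{smallmatrix}\right)$ shows this map has norm $O(1)$ from $\mathbb{C}^N$ into $\mathbb{C}^N_{\langle\lambda-\Re z\rangle}$, i.e. $\|E_{-+}^\delta(z,\lambda)^{-1}\|_{\mathcal{L}(\mathbb{C}^N,\mathbb{C}^N)}=O(\langle\lambda-\Re z\rangle^{-1})$.

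The only genuine work is the bookkeeping of the weighted norms --- tracking how the weight $\langle\lambda-\Re z\rangle^{1/4}$ on the boundary datum and the weight $\langle\lambda-\Re z\rangle$ on $\mathbb{C}^N$ propagate through the compositions so that the claimed powers come out exactly --- together with the routine checks of the $O(|\lambda|\delta)$ perturbation bound and of the uniform bound on $\mathcal{E}_\lambda(z)$. All of these are either stated in the excerpt or immediate from the estimates there, so I expect no conceptual obstacle, only care with the weights.
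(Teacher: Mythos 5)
Your argument follows the paper's proof closely: the undeformed diagonal formula \eqref{model:airye-+}, the perturbation bound on $\mathcal{P}_\lambda^\delta-\mathcal{P}_\lambda$, and the Schur-complement identity
\begin{equation*}
(E_{-+}^\delta)^{-1}=-R_+^{\lambda,\delta}
\left(\begin{array}{c}P_\lambda-z\\\gamma_1\end{array}\right)^{-1}
\left(\begin{array}{c}R_-^{\lambda,\delta}\\0\end{array}\right)
\end{equation*}
are exactly the ingredients the paper invokes, and your deductions of \eqref{pro:e-+:zero} and \eqref{pro:e-+:inverse} from these are correct.

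There is, however, a bookkeeping slip in your derivation of \eqref{pro:e-+:perturb}. The bound $\|\mathcal{E}_\lambda^\delta(z)-\mathcal{E}_\lambda(z)\|_{\mathcal{L}(\mathcal{H}_{z,\lambda,r},\mathcal{B}_{z,\lambda,r})}=O(|\lambda|\delta)$ is right, but its $(-+)$-block is measured as a map $\mathbb{C}^N_{\langle\lambda-\Re z\rangle}\to\mathbb{C}^N$. Converting to the unweighted matrix norm reintroduces the weight on the source space, so one actually obtains
\begin{equation*}
\|E_{-+}^\delta(z,\lambda)-\diag(z-\lambda-e^{-2\pi i/3}\zeta_j')\|_{\mathcal{L}(\mathbb{C}^N,\mathbb{C}^N)}
=O(|\lambda|\delta)\,\langle\lambda-\Re z\rangle,
\end{equation*}
which is the estimate the paper records, not the $O(\sqrt{\delta})$ you state. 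Over the range $|\lambda|\leqslant1/(C\sqrt{\delta})$, $|\Re z|\ll1/\sqrt{\delta}$ this right-hand side is $O(1/C)$ (plus the implied constant in $\ll$), bounded but not tending to zero with $\delta$, since $\lambda$ is allowed to grow like $1/\sqrt{\delta}$. So the closing sentence ``$\leqslant\epsilon$ as soon as $\delta$ is small depending on $\epsilon$'' is not the right justification; what makes the bound small is taking $C$ large and the $\ll$ constant small (i.e.\ $|\lambda|\sqrt{\delta}$ and $|\Re z|\sqrt{\delta}$ small) depending on $\epsilon$, which is the intended reading of ``sufficiently small depending on $\epsilon$'' in the statement. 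With that adjustment your proof is essentially the paper's.
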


\begin{proof}
The \eqref{pro:e-+:perturb} follows from the perturbation
\begin{equation*}
\|E_{-+}^\delta(z,\lambda)-\diag(z-\lambda-e^{-2\pi i/3}\zeta_j')\|
\leqslant O(\lambda|\delta|)\langle\lambda-\Re z\rangle.
\end{equation*}
Let us recall the general fact, (which is essentially the Schur complement formula, see e.g. \cite{HS} or \cite{SZ7} in the setting of Grushin problems),
\begin{equation*}
(E^{\delta}_{-+})^{-1}=-R_+^{\lambda,\delta}
\left(
  \begin{array}{c}
    P_\lambda-z \\
    \gamma_1 \\
  \end{array}
\right)^{-1}
\left(
  \begin{array}{c}
    R_-^{\lambda,\delta} \\
    0 \\
  \end{array}
\right).
\end{equation*}
Since $\left(
  \begin{array}{c}
    P_\lambda-z \\
    \gamma_1 \\
  \end{array}
\right)$ is not invertible precisely when $\eta_j=e^{-2\pi i/3}\zeta_j'+\lambda-z=0$, (in which case $e_j$ is in the kernel), the same is true for $E_{-+}^{\delta}$. This gives \eqref{pro:e-+:zero}. Finally, in the case $|\lambda|\gg1+|\Re z|$, by \ref{lem:airyinv},
$\left(
  \begin{array}{c}
    P_\lambda-z \\
    \gamma_1 \\
  \end{array}
\right)$ is invertible. Therefore $E_{-+}^\delta:\mathbb{C}^N_{\langle\lambda-\Re z\rangle}\to\mathbb{C}^N$ is also invertible, which gives \eqref{pro:e-+:inverse}.
\end{proof}

\subsection{The ``easy'' model}
When $|\lambda|\gg1+|\Re z|$ and $|\Im z|<C_1$, we can consider an even simpler model problem with the operator \eqref{model:easy} which is already invertible. To obtain the uniform symbolic properties, we shall construct the Grushin problem using the same correction terms $R_\pm^{\lambda,\delta}$ as in \eqref{model:modairy}. We define
\begin{equation}
\mathcal{P}_\lambda^\#(z)=\left(
                            \begin{array}{cc}
                              P_\lambda^\#-z & R_-^{\lambda,\delta}\\
                              \gamma_1 & 0 \\
                              R_+^{\lambda,\delta} & 0 \\
                            \end{array}
                          \right):\mathcal{B}_{\lambda,r}^\#
                          \to\mathcal{H}_{\lambda,r}^\#,
\end{equation}
where the spaces $\mathcal{B}_{\lambda,r}^\#$ and $\mathcal{H}_{\lambda,r}^\#$  are defined by
\begin{equation}
\begin{split}
\mathcal{B}_{\lambda,r}^\#&\;=B_{\lambda,r}^\#\times\mathbb{C}^N,
B_{\lambda,r}^\#=\{u\in L^2_r:D_t^2u\in L^2_r\},\\
\left\|\left(
         \begin{array}{c}
           u \\
           u_- \\
         \end{array}
       \right)
\right\|_{\mathcal{B}_{\lambda,r}^\#}&\;
=\langle\lambda\rangle\|u\|_{L^2_r}+\|D_t^2u\|_{L^2_r}+|u_-|,\\
\mathcal{H}_{\lambda,r}^\#&\;
=L^2_r\times\mathbb{C}_{\langle\lambda\rangle^{1/4}}\times
\mathbb{C}^N_{\langle\lambda\rangle},\\
\left\|\left(
         \begin{array}{c}
           v \\
           v_0 \\
           v_+
         \end{array}
       \right)
\right\|_{\mathcal{H}_{\lambda,r}^\#}&\;=\|v\|_{L^2_r}
+\langle\lambda\rangle^{1/4}|v_0|
+\langle\lambda\rangle|v_+|.
\end{split}
\end{equation}

\begin{prop}
\label{prop:easy}
For $|\lambda|\gg1+|\Re z|$, and $r\in[0,r_0]$ with $r_0>0$ small enough, $\mathcal{P}_\lambda^\#(z):\mathcal{B}_{\lambda,r}^\#
\to\mathcal{H}_{\lambda,r}^\#$ is uniformly invertible. We have the mapping properties for $\mathcal{P}_\lambda^\#(z)$ and its inverse $\mathcal{E}_\lambda^\#(z)$:
\begin{equation}
\begin{split}
\|\partial_\lambda^k\mathcal{P}_\lambda^\#(z)
\|_{\mathcal{L}(\mathcal{B}_{\lambda,r}^\#,
\mathcal{H}_{\lambda,r}^\#)}
\leqslant&\; C_k\langle\lambda\rangle^{-k}\\
\|\partial_\lambda^k\mathcal{E}_\lambda^\#(z)
\|_{\mathcal{L}(\mathcal{H}_{\lambda,r}^\#,
\mathcal{B}_{\lambda,r}^\#)}
\leqslant&\; C_k\langle\lambda\rangle^{-k}.
\end{split}
\end{equation}
Moreover, the $(-+)$-component of $\mathcal{E}_\lambda^\#$ satisfies:
\begin{equation}
E_{-+}^\#(z,\lambda)^{-1}=O(\langle\lambda\rangle^{-1}).
\end{equation}
\end{prop}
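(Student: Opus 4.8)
The plan is to follow the scheme of Propositions~\ref{prop:airy} and~\ref{prop:airy:e-+}, but the argument is far shorter: in the regime $|\lambda|\gg1+|\Re z|$, $|\Im z|<C_1$ (where $\langle\lambda\rangle\sim\langle\lambda-\Re z\rangle$) the combined operator $\left(\begin{smallmatrix}P_\lambda^\#-z\\\gamma_1\end{smallmatrix}\right):B_{\lambda,r}^\#\to L^2_r\times\mathbb{C}_{\langle\lambda\rangle^{1/4}}$ is already invertible with a uniformly bounded inverse, and the correction terms $R_\pm^{\lambda,\delta}$ are only carried along to match the global symbolic setup. For $r=0$ this invertibility is pure ellipticity: reflecting evenly across $t=0$ identifies the Neumann realization of $e^{-2\pi i/3}D_t^2+\lambda-z$ with the Fourier multiplier $e^{-2\pi i/3}\tau^2+\lambda-z$ on $\mathbb{R}$, and separating real and imaginary parts one checks $|e^{-2\pi i/3}\tau^2+\lambda-z|\geqslant C^{-1}(\tau^2+\langle\lambda-\Re z\rangle)$. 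This produces a solution operator $G_\lambda^\#$ with $\|G_\lambda^\#\|_{L^2\to L^2}=O(\langle\lambda\rangle^{-1})$, $\|D_t^2G_\lambda^\#\|_{L^2\to L^2}=O(1)$ (hence $G_\lambda^\#=O(1):L^2\to B_{\lambda,0}^\#$, and $D_tG_\lambda^\#=O(\langle\lambda\rangle^{-1/2})$ by interpolation), together with the Poisson operator $K_\lambda^\#$, multiplication by the $L^2$ solution $f_\lambda^\#(t)=-\alpha^{-1/2}e^{-\alpha^{1/2}t}$ of $(e^{-2\pi i/3}D_t^2+\lambda-z)f=0$, $f'(0)=1$, where $\alpha=e^{2\pi i/3}(\lambda-z)$ stays in a fixed sector away from $(-\infty,0]$, so that $\Re\alpha^{1/2}\geqslant C^{-1}\langle\lambda\rangle^{1/2}$, $\|f_\lambda^\#\|_{L^2}=O(\langle\lambda\rangle^{-3/4})$, $\|D_t^2f_\lambda^\#\|_{L^2}=|\alpha|\,\|f_\lambda^\#\|_{L^2}=O(\langle\lambda\rangle^{1/4})$ and $K_\lambda^\#=O(1):\mathbb{C}_{\langle\lambda\rangle^{1/4}}\to B_{\lambda,0}^\#$. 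The weight $0<r\leqslant r_0$ is then handled by conjugating with $e^{\pm rt/2}$ and treating the commutator as a small perturbation, exactly as in Lemma~\ref{lem:airy2}, using $D_tG_\lambda^\#=O(\langle\lambda\rangle^{-1/2})$ and the super-exponential decay of the $e_j$.

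For the full Grushin problem I would eliminate $u=G_\lambda^\#(v-R_-^{\lambda,\delta}u_-)+K_\lambda^\#v_0$, reducing $\mathcal{P}_\lambda^\#(z)(u,u_-)^{T}=(v,v_0,v_+)^{T}$ to the $N\times N$ system $Mu_-=R_+^{\lambda,\delta}G_\lambda^\#v+R_+^{\lambda,\delta}K_\lambda^\#v_0-v_+$ with $M:=R_+^{\lambda,\delta}G_\lambda^\#R_-^{\lambda,\delta}$, so that $E_{-+}^\#=-M^{-1}$ once $M$ is invertible. The invertibility of $M$ is the one point requiring a new argument, and I expect it to be the crux. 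The key observation is that each $e_j^{\lambda,\delta}$ is concentrated at frequency $\Lambda=\langle\delta\lambda\rangle^{1/2}$, which is $\ll\langle\lambda\rangle^{1/2}$ — the elliptic scale of $G_\lambda^\#$ — provided $\delta$ is small and $|\lambda|$ is large; quantitatively $\|D_t^2e_j^{\lambda,\delta}\|_{L^2}=O(\langle\delta\lambda\rangle)$, so from $e_j^{\lambda,\delta}=(\lambda-z)G_\lambda^\#e_j^{\lambda,\delta}+G_\lambda^\#(e^{-2\pi i/3}D_t^2e_j^{\lambda,\delta})$ one gets $G_\lambda^\#e_j^{\lambda,\delta}=(\lambda-z)^{-1}e_j^{\lambda,\delta}+O(\langle\lambda\rangle^{-2}\langle\delta\lambda\rangle)$ in $L^2$, whence $M=(\lambda-z)^{-1}(I_N+O(\langle\lambda\rangle^{-1}\langle\delta\lambda\rangle))$ with $\langle\lambda\rangle^{-1}\langle\delta\lambda\rangle\leqslant C\delta+C\langle\lambda\rangle^{-1}\ll1$. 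Thus $M$ is invertible with $M^{-1}=O(\langle\lambda\rangle)$, which gives $E_{-+}^\#(z,\lambda)^{-1}=-M=O(\langle\lambda\rangle^{-1})$. Feeding $u_-$, then $u$, back through the reduced equations — using the weights $\langle\lambda\rangle^{1/4}$ on $\mathbb{C}$ and $\langle\lambda\rangle$ on $\mathbb{C}^N$, and $\|R_+^{\lambda,\delta}K_\lambda^\#\|=O(\|f_\lambda^\#\|_{L^2})=O(\langle\lambda\rangle^{-3/4})$ — yields the uniform invertibility estimate for $\mathcal{P}_\lambda^\#(z)$; the exponential weight $r\in(0,r_0]$ is again dealt with by conjugation and perturbation.

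Finally, for the symbolic mapping properties, $\partial_\lambda(P_\lambda^\#-z)=1$ and $\partial_\lambda^k(P_\lambda^\#-z)=0$ for $k\geqslant2$, while $\|\partial_\lambda^ke_j^{\lambda,\delta}\|_{L^2_r}=O_k(\delta^k\langle\delta\lambda\rangle^{-k})=O_k(\langle\lambda\rangle^{-k})$; differentiating $\mathcal{P}_\lambda^\#(z)$ entrywise and tracking the weights then gives $\|\partial_\lambda^k\mathcal{P}_\lambda^\#(z)\|=O_k(\langle\lambda\rangle^{-k})$, and the estimate for $\mathcal{E}_\lambda^\#(z)$ follows from the $k=0$ case (the invertibility just proved) by differentiating $\mathcal{E}_\lambda^\#\mathcal{P}_\lambda^\#=\Id$ and inducting with the Leibniz rule, as at the end of the proof of Proposition~\ref{prop:airy}. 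In short, the only genuinely new ingredient compared with the Airy analysis is the scale-separation estimate for $M$; everything else is a routine adaptation of what was done in this section.
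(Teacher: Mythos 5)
Your proof is correct, but it takes a genuinely different route from the paper's. The paper's (very terse) proof simply swaps the Airy Poisson operator $K_\lambda$ for multiplication by the decaying exponential $f_\lambda^\#$, checks $\|f_\lambda^\#\|_{L^2}=O(|\lambda|^{-3/4})$, and reruns the quadratic-form coercivity of Lemma~\ref{lem:airyinv}, the projection argument of Proposition~\ref{prop:airy}, and the norm-tracking of Proposition~\ref{prop:airy:e-+} verbatim; in that scheme the invertibility of $E^\#_{-+}$ and the bound $(E^\#_{-+})^{-1}=O(\langle\lambda\rangle^{-1})$ fall out of the Schur-complement identity $(E^\#_{-+})^{-1}=-R_+^{\lambda,\delta}\bigl(\begin{smallmatrix}P_\lambda^\#-z\\\gamma_1\end{smallmatrix}\bigr)^{-1}\bigl(\begin{smallmatrix}R_-^{\lambda,\delta}\\0\end{smallmatrix}\bigr)$ combined with the weighted-space mapping bounds on the combined operator, without any explicit computation of $M=R_+^{\lambda,\delta}G_\lambda^\#R_-^{\lambda,\delta}$. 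You instead build the right inverse $G_\lambda^\#$ by even reflection and the elementary multiplier estimate $|e^{-2\pi i/3}\tau^2+\lambda-z|\gtrsim\tau^2+\langle\lambda-\Re z\rangle$, eliminate $u$ by hand, and verify $M\approx(\lambda-z)^{-1}I_N$ via the scale separation $\langle\delta\lambda\rangle\ll\langle\lambda\rangle$ (valid since $\delta$ is small and $|\lambda|\gg1$). Both routes give the same result; yours is more concrete and makes the ellipticity of the $\#$-model fully transparent, at the cost of an extra (though short) computation for $M$ --- which the paper's argument sidesteps by letting the abstract Grushin framework produce the invertibility of $E_{-+}^\#$ as a formal consequence of the invertibility of $\bigl(\begin{smallmatrix}P_\lambda^\#-z\\\gamma_1\end{smallmatrix}\bigr)$. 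So your closing remark that the scale-separation estimate for $M$ is ``the only genuinely new ingredient'' slightly mischaracterizes the comparison: the paper needs no such estimate, and it is a feature of your more constructive approach rather than a gap you had to fill.
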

\begin{proof}
The proof is almost identical to the Airy model problem we discussed above. To make the argument work, we only need to replace the Poisson operator $K_\lambda$ by $K_\lambda^\#$ satisfying
\begin{equation*}
P_\lambda^\#K_\lambda^\#=0, \gamma_1K_\lambda^\#=0,
\end{equation*}
which is given by multiplying the function
\begin{equation*}
f_\lambda^\#=-e^{\pi i/3}\lambda^{-1/2}\exp(-e^{-\pi i/3}\lambda^{1/2}t).
\end{equation*}
When $\lambda$ is negative, we choose the branch $\lambda^{1/2}=i(-\lambda)^{1/2}$ so $f_\lambda^\#$ has exponential decay. An easy calculation shows that
\begin{equation*}
\|f_\lambda\|_{L^2}=O(|\lambda|^{-3/4}),
\end{equation*}
and therefore all our arguments in Lemma \ref{lem:airyinv}, thus in Proposition \ref{prop:airy} and \ref{prop:airy:e-+} can be carried out in the same way. We shall omit the details here.
\end{proof}

\subsection{The $\mu$-dependent construction.}
Now we shall put the parameter $\mu$ back into the operator and describe the necessary modification we need to make in the model problem. The idea is to change coordinates $t=\mu^{-1/3}\tilde{t}$ in \eqref{model:airy} which will reduce to the case $\mu=1$. From our discussion, it will be clear that when $\mu$ varies in a compact subset of $(0,\infty)$ all the estimates will be uniformly in $\mu$ provided that we construct all the operators accordingly and replace the the eigenvalues $\zeta_j'$ of Neumann Airy operator $D_t^2+t$ by $\mu^{2/3}\zeta_j'$. More precisely, we have the following Grushin problem
\begin{equation}
\label{model:modairymu}
\mathcal{P}_\lambda^\delta(z)=\left(
                         \begin{array}{cc}
                           P_\lambda-z & R_-^{\lambda,\delta,\mu} \\
                           \gamma_1 & 0 \\
                           R_+^{\lambda,\delta,\mu} & 0 \\
                         \end{array}
                       \right):
\mathcal{B}_{z,\lambda,r}\to\mathcal{H}_{z,\lambda,r}
\end{equation}
where the spaces $\mathcal{B}_{z,\lambda,r}, \mathcal{H}_{z,\lambda,r}$ are as before and we reintroduce the additional parameter $\mu$ in the operators
\begin{equation*}
\begin{split}
P_\lambda-z=&\; e^{-2\pi i/3}(D_t^2+\mu t)+\lambda-z\\
R_+^{\lambda,\delta,\mu}u=&\;(\langle u,e_{j,\mu}^{\lambda,\delta}\rangle)_{1\leqslant j\leqslant N}\\
R_-^{\lambda,\delta,\mu}u_-=&\;\sum_{j=1}^Nu_-(j)e_{j,\mu}^{\lambda,\delta}
\end{split}
\end{equation*}
with
\begin{equation}
\label{def:mueigens}
e_{j,\mu}^{\lambda,\delta}(t)=\mu^{1/6}e_j^{\lambda,\delta}(\mu^{1/3}t)
=\mu^{1/6}\langle\delta\lambda\rangle^{1/4}
e_j(\mu^{1/3}\langle\delta\lambda\rangle^{1/2}t).
\end{equation}
In the mean time, we also replace the $R_\pm^{\lambda,\delta}$ in the easy model by $R_\pm^{\lambda,\delta,\mu}$. Then all the previous results hold uniformly in $\mu\in[C^{-1},C]\subset(0,\infty)$ with possibly a smaller $r_0>0$ due to the change of variable $t=\mu^{-1/3}\tilde{t}$.

\section{Second microlocal symbol class for Grushin problems}
\label{sec:second}
In this part, we consider the symbol class for the operator \eqref{op:comb} near the boundary where we have the expression in coordinates $(t=h^{-2/3}x_n,x')$,
\begin{equation}
\begin{split}
P-z=&\;e^{-2\pi i/3}(D_t^2+2tQ(h^{2/3}t,x',hD_{x'};h))\\
&+h^{-2/3}(R(x',hD_{x'};h)-w)+F(h^{2/3}t,x')h^{2/3}D_t-z,
\end{split}
\end{equation}
and $\gamma=\gamma_1+h^{2/3}k\gamma_0$. The difficulty is that though this operator has a good symbol property, out construction of the inverse requires a symbol class that has a non-classical behavior. More precisely, the symbol class will contain functions of $h^{-2/3}(R(x',\xi')-w)$ and near the glancing hypersurface $\Sigma_w=\{R(x',\xi')=w\}$. We lose $2/3$-power of $h$ each time we differentiate such symbols in the transversal direction. Symbol classes characterizing such non-classical behavior are introduced in \cite{SZ6} and we shall follow their approach.

\subsection{Second microlocalization with respect to a hypersurface}
In this part, we review some facts about the second microlocalization with respect to a hypersurface. For details, see \cite{SZ6}.

We always assume that $X$ is a $n$-dimensional compact smooth manifold and $\Sigma\subset T^\ast X$ is a smooth compact hypersurface. In our application, $X=\partial\mathcal{O}$ will be the boundary of the obstacle and $\Sigma=\Sigma_w=\{(x',\xi')\in T^\ast\partial\mathcal{O}:R(x',\xi')=w\}$ will be the glancing hypersurface. We shall also fix a distance function $d(\Sigma,\cdot)$ on $T^\ast X$ as the absolute value of a defining function of $\Sigma$. In particular, $d(\Sigma,\cdot)$ vanishes only on $\Sigma$ and behaves like $\langle\xi\rangle$ near the infinity in $T^\ast X$.

To start with, we recall the standard class of semiclassical symbols on $T^\ast X$, see e.g. \cite{DS}, \cite{Ma} and \cite{Z2},
\begin{equation*}
S^{m,k}(T^\ast X)=\{a\in C^\infty(T^\ast X\times(0,1]):|\partial_x^\alpha\partial_\xi^\beta a(x,\xi;h)|\leqslant C_{\alpha\beta}h^{-m}\langle\xi\rangle^{k-|\beta|}\}.
\end{equation*}
One can also study the more general class $S^{m,k}_\delta$ with $0\leqslant\delta<\frac{1}{2}$ where the right-hand side is replaced by $C_{\alpha\beta}h^{-m-\delta(|\alpha|+|\beta|)}
\langle\xi\rangle^{k-(1-\delta)|\beta|+\delta|\alpha|}$.

Now for any $0\leqslant\delta<1$ we define a class of symbols associated to $\Sigma$: $a\in S_{\Sigma,\delta}^{m,k_1,k_2}(T^\ast X)$ if
\begin{equation}\label{2msymbol}
\begin{split}
&\text{near } \Sigma: V_1\cdots V_{l_1}W_1\cdots W_{l_2}a=O(h^{-m-\delta l_1}
\langle h^{-\delta} d(\Sigma,\cdot)\rangle^{k_1}),\\
&\text{where } V_1, \ldots, V_{l_1} \text{ are vector fields tangent to } \Sigma,\\
&\text{ and }W_1, \ldots, W_{l_2} \text{ are any vector fields};\\
&\text{away from } \Sigma: \partial_x^\alpha\partial_\xi^\beta a(x,\xi;h)=O(h^{-m-\delta k_1}\langle\xi\rangle^{k_2-|\beta|}).
\end{split}
\end{equation}

To define the corresponding class of operators $\Psi_{\Sigma,\delta}^{m,k_1,k_2}$, we start locally by assuming $\Sigma$ is of the normal form $\Sigma_0=\{\xi_1=0\}$. Then near $\xi_1=0$, we can write $a=a(x,\xi,\lambda;h)$ with $\lambda=h^{-\delta}\xi_1$. Then \eqref{2msymbol} becomes
\begin{equation}
\partial_x^\alpha\partial_\xi^\beta\partial_\lambda^l
a(x,\xi,\lambda,h)=O(h^{-m})\langle\lambda\rangle^{k-l},
\end{equation}
which we shall write $a=\tilde{O}(h^{-m}\langle\lambda\rangle^k)$. Then we can define
\begin{equation}
\widetilde{\Op}_h(a)u(x)=\frac{1}{(2\pi h)^n}\int e^{\frac{i}{h}\langle x-y,\xi\rangle}a(x,\xi,h^{-\delta}\xi_1,h)u(y)dyd\xi.
\end{equation}
Then as in the standard semiclassical calculus, we have the composition formula: for $a=\tilde{O}(h^{-m_1}\langle\lambda\rangle^{k_1})$ and $b=\tilde{O}(h^{-m_2}\langle\lambda\rangle^{k_2})$,
\begin{equation*}
\widetilde{\Op}_h(a)\circ\widetilde{\Op}_h(b)
=\widetilde{\Op}_h(a\#_hb) \mod{\Psi^{-\infty,-\infty}(X)},
\end{equation*}
where
\begin{equation*}
a\#_hb(x,\xi,\lambda;h)=\sum_{\alpha\in\mathbb{N}^n}
\frac{1}{\alpha!}(h\partial_{\xi'})^{\alpha'}
(h\partial_{\xi_1}+h^{1-\delta}\partial_\lambda)^{\alpha_1}a
D_x^\alpha b\in\tilde{O}(h^{-m_1-m_2}\langle\lambda\rangle^{k_1+k_2}).
\end{equation*}
We also have a version of Beals's characterization of pseudodifferential operators: Let $A=A_h:\mathcal{S}(\mathbb{R}^n)\to\mathcal{S}'(\mathbb{R}^n)$ and put $x'=(x_2,\ldots,x_n)$. Then $A=\tilde{\Op}_h(a)$ for some $a=\tilde{O}(h^{-m}\langle\lambda\rangle^k)$ if and only if for all $N,p,q\geqslant0$ and every sequence $l_j(x',\xi'),j=1,\ldots,N$ of linear forms on $\mathbb{R}^{2(n-1)}$ there exists $C>0$ such that
\begin{equation*}
\begin{split}
\|\ad_{l_1(x',hD_{x'})}\circ\cdots\circ\ad_{l_N(x',hD_{x'})}\circ
(\ad_{h^{1-\delta}D_{x_1}})^p\circ(\ad_{x_1})^q&Au\|_{(q-\min(k,0))}\\
\leqslant&\; Ch^{N+(1-\delta)(p+q)}\|u\|_{(\max(k,0))},
\end{split}
\end{equation*}
where $\|u\|_{(p)}=\|u\|_{L^2}+\|(h^{1-\delta}D_{x_1})^pu\|_{L^2}$.

The global definition of the class $\Psi_{\Sigma,\delta}^{m,k_1,k_2}(X)$ relies on the invariance of $\widetilde{\Op}_h(\tilde{O}(\langle\lambda\rangle^m))$ under conjugation by $h$-Fourier integral operators whose associated canonical relation fixed $\{\xi_1=0\}$. See Proposition 4.2 in \cite{SZ6}. Now we define $A\in\Psi_{\Sigma,\delta}^{m,k_1,k_2}(X)$ if and only if\\
(1) for any $m_0\in\Sigma$ and any $h$-Fourier integral operator $U:C^\infty(X)\to C^\infty(\mathbb{R}^n)$ elliptic near $((0,0),m_0)$ whose corresponding canonical transformation $\kappa$ satisfies $\kappa(m_0)=(0,0)$, $\kappa(\Sigma\cap V)\subset\{\xi_1=0\}$ for some neighborhood $V$ of $m_0$, we have
$UAU^{-1}=\widetilde{\Op}_h
(\tilde{O}(h^{-m}\langle\lambda\rangle^{k_1})$, microlocally near $(0,0)$;\\
(2) for any $m_0$ outside any fixed neighborhood of $\Sigma$, $A\in\Psi^{m+\delta k_1,k_2}(X)$ microlocally near $m_0$ in both classical and semiclassical sense.

In particular, we have the quantization map
\begin{equation*}
\Op_{\Sigma,h}:S_{\Sigma,\delta}^{m,k_1,k_2}(T^\ast X)\to\Psi_{\Sigma,\delta}^{m,k_1,k_2}(X),
\end{equation*}
and the principal symbol map
\begin{equation*}
\sigma_{\Sigma,h}:\Psi_{\Sigma,\delta}^{m,k_1,k_2}(X)\to
S_{\Sigma,\delta}^{m,k_1,k_2}(T^\ast X)/
S_{\Sigma,\delta}^{m-1+\delta,k_1-1,k_2-1}(T^\ast X).
\end{equation*}

For $a\in S^{m,k_1,-\infty}_{\Sigma,\delta}$ we introduce a notion of essential support. We say for an $h$-dependent family of sets $V_h\subset T^\ast X$,
\begin{equation*}
\esssupp a\cap V_h=\emptyset
\end{equation*}
if and only if there exists $\chi\geqslant0$, $\chi\in S^{0,0,-\infty}(T^\ast X)$, such that
\begin{equation*}
\chi|_{V_h}\geqslant1, \chi a\in S^{-\infty,-\infty}(T^\ast X).
\end{equation*}

As the standard case, if $a,b\in S_{\Sigma,\delta}^{m,k,-\infty}(T^\ast X)$ satisfies $\Op_{\Sigma,h}(a)=\Op_{\Sigma,h}(b)$, then
$\esssupp a=\esssupp b$. Therefore we can define for $A\in \Psi^{m,k,-\infty}_{\Sigma,\delta}(X)$ the semiclassical wave front set as $\WF_h(A)=\esssupp a$ if $A=\Op_{\Sigma,h}(a)$.

Now we generalize the symbol class to an arbitrary order function $m$ and vector valued as operators from a Banach space $\mathcal{B}$ to another Banach space $\mathcal{H}$. We assume that $m=m(x,\xi,\lambda;h)$ is an order function with respect to the metric $g=dx^2+d\xi^2/\langle\xi\rangle+d\lambda^2/\langle\lambda\rangle$ in the sense that
\begin{equation*}
|g_{(x,\xi,\lambda)}(y,\eta,\mu)|\leqslant c\Rightarrow
C^{-1}m(x,\xi,\lambda)\leqslant m(x+y,\xi+\eta,\lambda+\mu)
\leqslant Cm(x,\xi,\lambda).
\end{equation*}
(See \cite{Ho} for instance.) We also assume that $\mathcal{B}$ and $\mathcal{H}$ are equipped with $(x,\xi,\lambda;h)$-dependent norms $\|\cdot\|_{m_\mathcal{B}}$, $\|\cdot\|_{m_\mathcal{H}}$ which are equivalent to some fixed norm (may not uniformly), respectively. In addition, we assume that
the norms are continuous with respect to the metric $g$, uniformly with respect to $h$. Then we say that $a\in S_{\Sigma,\delta}(T^\ast X,m,\mathcal{L}(\mathcal{B},\mathcal{H}))$ if
\begin{equation}
\|a(x,\xi;h)u\|_{m_\mathcal{H}(x,\xi,\lambda;h)}\leqslant Cm(x,\xi,\lambda;h)\|u\|_{m_\mathcal{B}(x,\xi,\lambda;h)}, \lambda=h^{-\delta}d(\Sigma,\cdot), \text{ for all } u\in\mathcal{B},
\end{equation}
and if this statement is stable under applications of vector fields in the sense of \eqref{2msymbol}, namely,
\begin{equation}
\begin{split}
&\text{near } \Sigma: V_1\cdots V_{l_1}W_1\cdots W_{l_2}a=O_{\mathcal{L}(\mathcal{B},\mathcal{H})}(mh^{-\delta l_1}),\\
&\text{where } V_1, \ldots, V_{l_1} \text{ are vector fields tangent to } \Sigma,\\
&\text{ and }W_1, \ldots, W_{l_2} \text{ are any vector fields};\\
&\text{away from } \Sigma: \partial_x^\alpha\partial_\xi^\beta a(x,\xi;h)=O_{\mathcal{L}(\mathcal{B},\mathcal{H})}
(m\langle\xi\rangle^{-|\beta|}).
\end{split}
\end{equation}
Then we can obtain a class of operators $\Psi_{\Sigma,\delta}(X;m,\mathcal{L}(\mathcal{B},\mathcal{H}))$ and the corresponding principal symbol map
\begin{equation}
\begin{split}
\sigma_{\Sigma,h}:
\Psi_{\Sigma,\delta}&(X;m,\mathcal{L}(\mathcal{B},\mathcal{H}))\\
&\to S_{\Sigma,\delta}(T^\ast X;m,\mathcal{L}(\mathcal{B},\mathcal{H}))/
S_{\Sigma,\delta}(T^\ast X;m\langle h^{-\delta}d(\Sigma,\cdot)\rangle^{-1},
\mathcal{L}(\mathcal{B},\mathcal{H})).
\end{split}
\end{equation}

\subsection{Analysis near the glancing hypersurface}
We can use $|R(x',\xi')-w|$ as our distance function to the glancing hypersurface $\Sigma_w$ for which we shall perform the second microlocalization. First, we work near the glancing hypersurface, i.e. $|R(x',\xi')-w|\leqslant2C^{-1}$, then
$$\lambda=h^{-2/3}(R(x',\xi')-w)=O(h^{-2/3}).$$
We shall think of this as perturbation of the principal symbol
\begin{equation}
\left(
  \begin{array}{c}
    P_0-z \\
    \gamma_1 \\
  \end{array}
\right)=
\left(
  \begin{array}{c}
    e^{-2\pi i/3}(D_t^2+\mu t)+\lambda-z \\
    \gamma_1 \\
  \end{array}
\right),
\end{equation}
where
$$\mu=2Q(x',\xi')\in[C^{-1},C].$$
As in the previous section, we set up the Grushin problem by letting $R_\pm=R_\pm^{\lambda,\delta}$ there. Then we have the operator-valued symbol
\begin{equation}
\mathcal{P}_0(z)=
\left(
  \begin{array}{cc}
    P_0-z & R_- \\
    \gamma_1 & 0 \\
    R_+ & 0 \\
  \end{array}
\right)
\end{equation}
which is uniformly invertible in $\mathcal{L}(\mathcal{B}_{z,\lambda,r},\mathcal{H}_{z,\lambda,r})$ with inverse $\mathcal{E}_0(z)$.

For simplicity, let us pretend for now that $Q$ does not depend additionally in $h$, then by Taylor expansion with respect to $x_n=h^{2/3}t$, we have
$$\mathcal{P}(z)\equiv\mathcal{P}_0(z)+h^{2/3}
\mathcal{K}_0+\sum_{j=1}^\infty h^{2j/3}T^j\mathcal{P}_j+\sum_{j=1}^\infty h^{2j/3}T^{j-1}\mathcal{D}_j.$$
Here
$$\mathcal{K}_0=\left(
        \begin{array}{cc}
          0 & 0 \\
          k(x')\gamma_0 & 0 \\
          0 & 0 \\
        \end{array}
      \right),$$
$$\mathcal{P}_j=
\left(
  \begin{array}{cc}
    \frac{1}{j!}2e^{-2\pi i/3}t\partial_t^jQ(0,x',\xi') & 0 \\
    0 & 0 \\
    0 & 0 \\
  \end{array}
\right),$$
$$\mathcal{D}_j=
\left(
  \begin{array}{cc}
    \frac{1}{(j-1)!}\partial_t^{j-1}F(0,x')D_t & 0 \\
    0 & 0 \\
    0 & 0 \\
  \end{array}
\right),$$
and
$$T=\left(
      \begin{array}{ccc}
        t & 0 & 0\\
        0 & 0 & 0\\
        0 & 0 & 0\\
      \end{array}
    \right).$$

To find the inverse of such symbols, we shall take the approach similar to \S 1 of Sj\"{o}strand \cite{S1} which is again motivated by the work of Boutet de Monvel-Kree \cite{BK} on formal analytic symbols. Instead of considering a symbol $q=q(x,\xi;h)$, we deal with the formal operator
$$Q=q(x,\xi+hD_x;h)
\equiv\sum_{\alpha\in\mathbb{N}^{n-1}}\frac{1}{\alpha!}
\partial_\xi^\alpha q(x,\xi;h)(hD_x)^\alpha.$$
The symbol $q$ itself can be recovered by the formula
$$q=Q(1).$$
The advantage to work with this setting is that the composition formula
$$a\#_hb=\sum_{\alpha\in\mathbb{N}^{n-1}}
\frac{1}{\alpha!}(h\partial_\xi)^\alpha aD_x^\alpha b$$
becomes the formal composition of the corresponding formal operators $A$ and $B$:
$$a\#_hb=A\circ B(1).$$
Therefore to find the inverse of such a symbol is equivalent to find the inverse of the corresponding formal operator.

For this purpose, we shall introduce the following class of operators
\begin{equation*}
\mathfrak{A}=\sum_{k,\alpha}
(h^{2/3}T)^kA_{k,\alpha}(x',\xi,\lambda;h)D_{x'}^\alpha,
\end{equation*}
where
$$A_{k,\alpha}:\mathcal{B}_{z,\lambda,r}
\to\mathcal{H}_{z,\lambda,r}.$$
The inverse of such operators should be of the form
\begin{equation*}
\mathfrak{B}=\sum_{k,\alpha}
(h^{2/3}T)^kB_{k,\alpha}(x',\xi,\lambda;h)D_{x'}^\alpha,
\end{equation*}
where
$$B_{k,\alpha}:\mathcal{H}_{z,\lambda,r}\to\mathcal{B}_{z,\lambda,r}.$$
However, we should notice that the $T$ in the second class of operators should be interpreted as
$$T=\left(
      \begin{array}{ccc}
        t & 0\\
        0 & 0\\
      \end{array}
    \right)$$
acting on $\mathcal{B}_{z,\lambda,r}$ instead of on $\mathcal{H}_{z,\lambda,r}$. When needed, we shall write this one as $T_{\mathcal{B}}$ and the previous one as $T_{\mathcal{H}}$.

There are several technical issues about these two different operators $T$ that we have to deal with. First, $T$ is not a bounded operator on $\mathcal{B}_{z,\lambda,r}$ or $\mathcal{H}_{z,\lambda,r}$. We can deal with this issue by relaxing the exponentially weighted space.
$$T^k=O(1)C^kk^k(r-r')^{-k}
:\mathcal{B}_{z,\lambda,r}\to\mathcal{B}_{z,\lambda,r'}$$
if $r>r'$ and similar for $\mathcal{H}_{z,\lambda,r}\to\mathcal{H}_{z,\lambda,r'}$. Therefore we can work on the formal level and interpret the formal operators in the end as operators from $\mathcal{B}_{z,\lambda,r}$ to $\mathcal{H}_{z,\lambda,r'}$ (or similar operators with the weight function in the codomain relaxed to $r'$.)

The second issue comes from the non-commutativity of operators $T$ with $A_k$ or $B_k$. To compose two such operators $\mathfrak{A}$ and $\mathfrak{B}$, we are hoping to get a class of operators
\begin{equation*}
\mathfrak{C}=\sum_{k,\alpha}
(h^{2/3}T)^kC_{k,\alpha}(x',\xi',\lambda)D_{x'}^\alpha,
\end{equation*}
where
$$C_{k,\alpha}:\mathcal{H}_{z,\lambda,r}\to\mathcal{H}_{z,\lambda,r}
\text{ or }\mathcal{B}_{z,\lambda,r}\to\mathcal{B}_{z,\lambda,r},$$
depending on the order of composition. This composition will involve the ``commutators'' $\ad_T=[T,\cdot]$ which we should interpreted as
$$\ad_T(A)=T_{\mathcal{H}}A-AT_{\mathcal{B}},$$
$$\ad_T(B)=T_{\mathcal{B}}B-BT_{\mathcal{H}},$$
when it acts on different classes. We shall also need $\ad_T$ to act on the two different classes of $\mathfrak{C}$ and we shall interpret it accordingly.

This involves the study of stability of mapping properties of $A_k$ and $B_k$ under the ``commutator operation'' $\ad_T$. We first consider $\mathcal{P}_0$ to see its mapping properties and then adjust our definition of formal operators in a suitable way.

\begin{lem}
For $|\Re z|\ll1/\delta$, we have
\begin{equation}
\ad_T^k\mathcal{P}_0=O_k(\delta^{-k/2}\langle\lambda-\Re z\rangle^{-k/2})
:\mathcal{B}_{z,\lambda,r}\to\mathcal{H}_{z,\lambda,r}.
\end{equation}
\end{lem}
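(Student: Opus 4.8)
The plan is to compute $\ad_T^k\mathcal{P}_0$ explicitly, entry by entry, and to observe that its tower of commutators truncates: apart from the finite-rank correction terms $R_\pm$, only the top-left block survives, and that block is a differential operator of order $\leqslant1$ after one commutator and order $0$ after two. Recalling the block form of $\mathcal{P}_0$ and that $T_{\mathcal{H}}=\diag(t,0,0)$ acts on $\mathcal{H}_{z,\lambda,r}=L^2_r\times\mathbb{C}_{\langle\lambda-\Re z\rangle^{1/4}}\times\mathbb{C}^N_{\langle\lambda-\Re z\rangle}$ while $T_{\mathcal{B}}=\diag(t,0)$ acts on $\mathcal{B}_{z,\lambda,r}=B_{z,\lambda,r}\times\mathbb{C}^N$, so that $\ad_T\mathcal{P}_0=T_{\mathcal{H}}\mathcal{P}_0-\mathcal{P}_0T_{\mathcal{B}}$, one computes: for the top-left entry $\ad_T(P_0-z)=[t,P_0-z]=e^{-2\pi i/3}[t,D_t^2]=2ie^{-2\pi i/3}D_t$ (the term $\mu t$ commutes with $t$), then $\ad_T^2(P_0-z)=2ie^{-2\pi i/3}[t,D_t]=-2e^{-2\pi i/3}$, and $\ad_T^j(P_0-z)=0$ for $j\geqslant3$; $\ad_T$ sends the $\gamma_1$-entry to $-\gamma_1t=-\gamma_0$ (since $(tu)'(0)=u(0)$) and then to $\gamma_0t=0$, so $\ad_T^j\gamma_1=0$ for $j\geqslant2$; and for the $R_\pm$-entries $\ad_T$ simply left-multiplies $R_-$ by $t$ and right-multiplies $R_+$ by $t$, contributing $t^kR_-$ in the $(1,2)$-slot and $(-1)^kR_+t^k$ in the $(3,1)$-slot for every $k\geqslant1$. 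Hence for $k\geqslant3$ the only surviving entries are $t^kR_-$ and $\pm R_+t^k$, while for $k=1,2$ there are in addition the top-left (and, for $k=1$, the $\gamma_0$-) terms.

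It then remains to estimate each surviving block against the weights of $\mathcal{B}_{z,\lambda,r}$ and $\mathcal{H}_{z,\lambda,r}$. For the top-left blocks one rescales the basic estimates of Section \ref{sec:model}: conjugating \eqref{es:interpolation} by $e^{rt/2}$ (which perturbs $D_t$ only by a bounded operator, $r$ small) gives $D_t=O(\langle\lambda-\Re z\rangle^{-1/2}):B_{z,\lambda,r}\to L^2_r$, and the constant $-2e^{-2\pi i/3}$ is $O(\langle\lambda-\Re z\rangle^{-1}):B_{z,\lambda,r}\to L^2_r$ directly from $\langle\lambda-\Re z\rangle\|u\|_{L^2_r}\leqslant\|u\|_{B_{z,\lambda,r}}$; for the $\gamma_0$-entry, \eqref{es:boundary} rescaled gives $\gamma_0=O(\langle\lambda-\Re z\rangle^{-3/4}):B_{z,\lambda,r}\to\mathbb{C}$, i.e. $O(\langle\lambda-\Re z\rangle^{-1/2})$ into $\mathbb{C}_{\langle\lambda-\Re z\rangle^{1/4}}$. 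The crux is the $R_\pm t^k$ terms: writing $e_j^{\lambda,\delta}(t)=\Lambda^{1/2}e_j(\Lambda t)$ with $\Lambda=\langle\delta\lambda\rangle^{1/2}$, the change of variable $s=\Lambda t$ together with the super-exponential decay of the Airy eigenfunctions $e_j$ (only finitely many, $1\leqslant j\leqslant N$) give $\|t^ke_j^{\lambda,\delta}\|_{L^2_{\pm r}}=O_k(\Lambda^{-k})$ for $r$ small; hence $\|t^kR_-u_-\|_{L^2_r}\leqslant C\Lambda^{-k}|u_-|$, and, by Cauchy--Schwarz, $|(R_+t^ku)_j|=|\langle u,t^ke_j^{\lambda,\delta}\rangle|\leqslant\|u\|_{L^2_r}\|t^ke_j^{\lambda,\delta}\|_{L^2_{-r}}\leqslant C\Lambda^{-k}\langle\lambda-\Re z\rangle^{-1}\|u\|_{B_{z,\lambda,r}}$, which, taking into account the weight $\langle\lambda-\Re z\rangle$ on $\mathbb{C}^N$, yields $O_k(\Lambda^{-k})=O_k(\langle\delta\lambda\rangle^{-k/2})$ for both blocks. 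Finally, the elementary inequality $\langle\lambda-\Re z\rangle\leqslant C\delta^{-1}\langle\delta\lambda\rangle$, valid for every $\lambda\in\mathbb{R}$ whenever $|\Re z|\ll1/\delta$, converts $\langle\delta\lambda\rangle^{-k/2}$ into $C_k\delta^{-k/2}\langle\lambda-\Re z\rangle^{-k/2}$, which (since $\delta\leqslant1$) also dominates the $O(\langle\lambda-\Re z\rangle^{-k/2})$ contributions of the top-left and $\gamma_0$ blocks; and the substitution $t=\mu^{-1/3}\tilde t$ gives $\|t^ke_{j,\mu}^{\lambda,\delta}\|_{L^2_{\pm r}}=\mu^{-k/3}O_k(\Lambda^{-k})$, so everything is uniform for $\mu$ in a compact subset of $(0,\infty)$ (and $[t,\mu t]=0$ means $\mu$ does not enter the top-left commutators at all). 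Assembling the entries gives the claimed bound.

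The main obstacle is more bookkeeping than genuine analysis: one must keep the two copies of $T$ — multiplication by $t$ on $\mathcal{H}$ versus on $\mathcal{B}$ — straight through the iterated commutators, and keep the $(x',\xi',\lambda;h)$-dependent norms of $\mathcal{B}_{z,\lambda,r}$ and $\mathcal{H}_{z,\lambda,r}$ aligned, so that $D_t$, the constant, $\gamma_0$, $t^kR_-$ and $R_+t^k$ are each measured against the right normalization (this is where the powers $1/4$ and $1$ in the weights on $\mathbb{C}$ and $\mathbb{C}^N$, and the factor $\langle\lambda-\Re z\rangle$ in $\|\cdot\|_{B_{z,\lambda,r}}$, have to be tracked), together with the conversion between $\langle\delta\lambda\rangle$ and $\langle\lambda-\Re z\rangle$. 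It is precisely because $\mathcal{P}_0$ is a \emph{differential} operator in its principal block (so its $\ad_T$-tower terminates after two steps) plus a \emph{finite-rank} correction whose range consists of rapidly decaying functions (so $t^kR_-$ still lands in $L^2_r$ and $R_+t^k$ still pairs with any $u\in L^2_r$) that the exponential weight need not be relaxed here; this is what makes $\mathcal{P}_0$ the clean base case on which the formal-operator calculus of this section rests.
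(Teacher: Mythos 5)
Your proof is correct and follows essentially the same route as the paper's: compute the $3\times2$ block form of $\ad_T^k\mathcal{P}_0$ (with $\ad_t^k(P_0-z)$ terminating at $k=2$, $(-1)^k\gamma_1 t^k$ terminating at $k=1$, and $t^kR_-$, $(-1)^kR_+t^k$ surviving for all $k$), estimate each block against the weights of $\mathcal{B}_{z,\lambda,r}$ and $\mathcal{H}_{z,\lambda,r}$, and convert $\langle\delta\lambda\rangle^{-k/2}$ to $\delta^{-k/2}\langle\lambda-\Re z\rangle^{-k/2}$ using $|\Re z|\ll1/\delta$. You fill in the $R_\pm t^k$ estimates (Cauchy--Schwarz with dual weights $L^2_{\pm r}$ and the rescaling $s=\Lambda t$), which the paper simply asserts; otherwise the argument is the same.
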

\begin{proof}
We have seen in the last section that this is true for $k=0$. A simple calculation gives
\begin{equation*}
\ad_T^k\mathcal{P}_0=
\left(
  \begin{array}{cc}
    \ad_t^k(P_0-z) & t^kR_- \\
    (-1)^k\gamma_1t^k & 0 \\
    (-1)^kR_+t^k & 0 \\
  \end{array}
\right),
\end{equation*}
where $\ad_t=[t,\cdot]$ is the commutator with multiplying $t$. For $k=1$,
$$\ad_t(P_0-z)=2ie^{-2\pi i/3}D_t=O(\langle\lambda-\Re z\rangle^{-1/2}):\mathcal{B}_{z,\lambda,r}\to L_r^2.$$
For $k=2$,
$$\ad_t^2(P_0-z)=-2e^{-2\pi i/3}=O(\langle\lambda-\Re z\rangle^{-1}):\mathcal{B}_{z,\lambda,r}\to L_r^2.$$
For $k>2$,
$$\ad_t^k(P_0-z)=0.$$
For $k=1$,
$$(-1)^k\gamma_1t^k=\gamma_0=O(\langle\lambda-\Re z\rangle^{-1/2}):
\mathcal{B}_{z,\lambda,r}\to \mathbb{C}_{\langle\lambda-\Re z\rangle^{1/4}}.$$
For $k>1$,
$$(-1)^k\gamma_1t^k=0.$$
Also for $k\geqslant1$, we have
$$R^+t^k=O_k(\delta^{-k/2}\langle\lambda-\Re z\rangle^{-k/2}):
\mathcal{B}_{z,\lambda,r}\to\mathbb{C}^N;$$
$$(-1)^kt^kR^-=O_k(\delta^{-k/2}\langle\lambda-\Re z\rangle^{-k/2}):
\mathbb{C}^N\to L^2_r.$$
Combining all these estimates together, we get the desired mapping properties for $\ad_T^k\mathcal{P}_0$.
\end{proof}

On the other hand, we also need the stability for $\mathcal{P}_0(z)$ under differentiation in $x',\xi',\lambda$ which will give the second microlocal symbol class which is simply
\begin{equation*}
\mathcal{P}_0(z)\in S_{\Sigma_w,2/3}(\partial\mathcal{O};
1,\mathcal{L}(\mathcal{B}_{z,\lambda,r},\mathcal{H}_{z,\lambda,r})).
\end{equation*}
We shall combine the two types of mapping properties together to get
\begin{equation*}
\partial_{x'}^\alpha\partial_{\xi'}^\beta
\partial_\lambda^l\ad_T^k\mathcal{P}_0(z)
=O(\delta^{-k/2}\langle\lambda-\Re z\rangle^{-l-k/2})
:\mathcal{B}_{z,\lambda,r}\to\mathcal{H}_{z,\lambda,r},
\end{equation*}
where the constants depending on $k,l,\alpha,\beta$. Now each of $\partial_{x'},\partial_{\xi'},\partial_\lambda$ and $\ad_T$ is a derivation provided if we interpret $\ad_T$ suitably. We get similar estimates for the inverse:
\begin{equation*}
\partial_{x'}^\alpha\partial_{\xi'}^\beta
\partial_\lambda^l\ad_T^k\mathcal{E}_0(z)
=O(\delta^{-k/2}\langle\lambda-\Re z\rangle^{-l-k/2})
:\mathcal{H}_{z,\lambda,r}\to\mathcal{B}_{z,\lambda,r},
\end{equation*}
since we have seen the estimates for $k=l=0,\alpha=\beta=0$ in last section. We can replace $\langle\lambda-\Re z\rangle$ by $\langle\lambda\rangle$ with the expense of $\delta$-dependent constants.

Also we have the symbol properties for $\mathcal{P}_j$, $\mathcal{D}_j$ and $K_0$:
\begin{equation*}
\partial_{x'}^\alpha\partial_{\xi'}^\beta
\partial_\lambda^l\ad_T^k\mathcal{P}_j(z)
=O(\langle\lambda\rangle^{-l-k/2})
:\mathcal{B}_{z,\lambda,r}\to\mathcal{H}_{z,\lambda,r},
\end{equation*}
\begin{equation*}
\partial_{x'}^\alpha\partial_{\xi'}^\beta
\partial_\lambda^l\ad_T^k\mathcal{D}_j(z)
=O(\langle\lambda\rangle^{-1/2-l-k/2})
:\mathcal{B}_{z,\lambda,r}\to\mathcal{H}_{z,\lambda,r},
\end{equation*}
and
\begin{equation*}
\partial_{x'}^\alpha\partial_{\xi'}^\beta
\partial_\lambda^l\ad_T^k\mathcal{K}_0(z)
=O(\langle\lambda\rangle^{-1/2-l-k/2})
:\mathcal{B}_{z,\lambda,r}\to\mathcal{H}_{z,\lambda,r}.
\end{equation*}
We remark that we neglect a number of simplifying features here, for example, for $\mathcal{K}_0$, only when all of $\beta$, $k$ and $l$ are zero, the operator does not vanish.

Now we can introduce the suitable class of formal operators:
\begin{equation}
\mathfrak{A}=\sum_{\alpha\in\mathbb{N}^{n-1},j,k,l,m\in\mathbb{N}}
(h^{2/3}T)^j(h^{2/3}\langle\lambda\rangle^{-1/2})^k
(h^{1/3}\langle\lambda\rangle^{-1})^lh^m
\mathcal{A}_{\alpha,j,k,l,m}(x',\xi',\lambda,z)D_{x'}^\alpha,
\end{equation}
with the mapping properties for $\mathcal{A}_{\alpha,j,k,l,m}$
\begin{equation}
\partial_{x'}^{\tilde{\alpha}}\partial_{\xi'}^{\tilde{\beta}}
\partial_\lambda^{\tilde{l}}\ad_T^{\tilde{k}}
\mathcal{A}_{\alpha,j,k,l,m}
=O(\langle\lambda\rangle^{-\tilde{l}-\tilde{k}/2})
:\mathcal{B}_{z,\lambda,r}\to\mathcal{H}_{z,\lambda,r}.
\end{equation}

We shall rewrite the operator $\mathcal{P}$ as
\begin{equation*}
\mathcal{P}(z)=h^{2/3}\mathcal{K}_0(x')+\sum_{j=0}^\infty h^{2j/3}T^j
(\mathcal{P}_j(x',\xi',\lambda,z;h)
+h^{2/3}\mathcal{D}_{j+1}(x';h)),
\end{equation*}
where $\mathcal{K}_0$ is as above and $\mathcal{P}_j$, $\mathcal{D}_j$ satisfies the same properties as above.

Then the associated formal operator $\mathfrak{P}$ is given by
\begin{equation*}
\begin{split}
\mathfrak{P}=&\;
\sum_{\alpha\in\mathbb{N}^{n-1}}
\frac{1}{\alpha!}\partial_{\xi'}^\alpha
(\mathcal{P}(x',\xi',\lambda,z;h))(hD_{x'})^\alpha\\
=&\;\sum_{\alpha\in\mathbb{N}^{n-1}}\frac{1}{\alpha!}
[\partial_{\xi''}^{\alpha''}
(\partial_{\xi_1}+h^{-2/3}\partial_\lambda)^{\alpha_1}\mathcal{P}]
(x',\xi',\lambda,z;h)(hD_{x'})^\alpha\\
=&\;\sum_{\alpha\in\mathbb{N}^{n-1}}\frac{1}{\alpha!}
[(h\partial_{\xi''})^{\alpha''}(h\partial_{\xi_1}
+h^{1/3}\partial_\lambda)^{\alpha_1}\mathcal{P}]
(x',\xi',\lambda,z;h)D_{x'}^\alpha\\
=&\;h^{2/3}\mathcal{K}_0+\sum_{j=1}^\infty h^{2j/3}T^{j-1}\mathcal{D}_j\\
&\;+
\sum_{\alpha\in\mathbb{N}^{n-1}}\frac{1}{\alpha!}
\sum_{j\in\mathbb{N}}h^{2j/3}T^j[(h\partial_{\xi''})^{\alpha''}
(h\partial_{\xi_1}+h^{1/3}\partial_\lambda)^{\alpha_1}\mathcal{P}_j]
(x',\xi',\lambda,z;h)D_{x'}^{\alpha'}
\end{split}
\end{equation*}
is in this class and with principal term $\mathcal{P}_0(x',\xi,\lambda,z)=\mathcal{P}_0(z)$. Here we write $\alpha'=(\alpha_1,\alpha'')$.

For the inverse, we introduce the class of operators $\mathfrak{B}$ of the same form as $\mathfrak{A}$ with $\mathcal{A}_{\alpha,j,k,l,m}$ replaced by $\mathcal{B}_{\alpha,j,k,l,m}$ satisfying
\begin{equation}
\partial_{x'}^{\tilde{\alpha}}\partial_{\xi'}^{\tilde{\beta}}
\partial_\lambda^{\tilde{l}}\ad_T^{\tilde{k}}
\mathcal{B}_{\alpha,j,k,l,m}
=O(\langle\lambda\rangle^{-\tilde{l}-\tilde{k}/2})
:\mathcal{H}_{z,\lambda,r}\to\mathcal{B}_{z,\lambda,r}.
\end{equation}

Then the composition of $\mathfrak{A}$ and $\mathfrak{B}$,
$$\mathfrak{C}=\mathfrak{A}\circ\mathfrak{B},\;\;\; (\text{or } \mathfrak{B}\circ\mathfrak{A}),$$
is of the same form as $\mathfrak{A}$ and $\mathfrak{B}$ with $\mathcal{A}_{\alpha,j,k,l,m}$ or $\mathcal{B}_{\alpha,j,k,l,m}$ replaced by $\mathcal{C}_{\alpha,j,k,l,m}$ satisfying
\begin{equation}
\partial_{x'}^{\tilde{\alpha}}\partial_{\xi'}^{\tilde{\beta}}
\partial_\lambda^{\tilde{l}}\ad_T^{\tilde{k}}
\mathcal{B}_{\alpha,j,k,l,m}
=O(\langle\lambda\rangle^{-\tilde{l}-\tilde{k}/2})
:\mathcal{H}_{z,\lambda,r}\to\mathcal{H}_{z,\lambda,r}.
\end{equation}
(or $\mathcal{B}_{z,\lambda,r}\to\mathcal{B}_{z,\lambda,r}$.)

Now the construction of the formal inverses is through the standard techniques of Neumann series.

\begin{lem}
If $\mathfrak{A}$ is as above with $\mathcal{A}_0$ invertible. Also $\mathcal{B}_0=\mathcal{A}_0^{-1}$ satisfying
$$\mathcal{B}_0=O(1):
\mathcal{H}_{z,\lambda,r}\to\mathcal{B}_{z,\lambda,r}.$$
Then there exists $\mathfrak{B}$ as above with the principal term $\mathcal{B}_0$ such that
$$\mathfrak{A}\circ\mathfrak{B}=\Id,\;\;\; \mathfrak{B}\circ\mathfrak{A}=\Id.$$
\end{lem}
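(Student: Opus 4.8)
The plan is to invert $\mathfrak{A}$ by a formal Neumann series, exploiting that every monomial of $\mathfrak{A}$ other than the principal term $\mathcal{A}_0$ carries a positive power of $h^{1/3}$.

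First I would split $\mathfrak{A}=\mathcal{A}_0+\mathfrak{A}'$, where $\mathfrak{A}'$ is the sum over all multi-indices $(\alpha,j,k,l,m)\neq(0,0,0,0,0)$. Inspecting the generators $h^{2/3}T$, $h^{2/3}\langle\lambda\rangle^{-1/2}$, $h^{1/3}\langle\lambda\rangle^{-1}$, the scalar factor $h^m$, and the factor $(hD_{x'})^\alpha$ produced by the composition formula, one sees that each monomial in $\mathfrak{A}'$ contains at least one factor of $h^{1/3}$; so heuristically $\mathfrak{A}'=O(h^{1/3})$, made precise by relaxing the exponential weight from $r$ to some $r'<r$, where the $T^k$ contribute the factor $C^kk^k(r-r')^{-k}$, which is absorbed by the accompanying $h^{2k/3}$.

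Next I would write the candidate inverse as the asymptotically summed Neumann series
\begin{equation*}
\mathfrak{B}=\mathcal{B}_0\sum_{N=0}^\infty(-\mathfrak{A}'\mathcal{B}_0)^N,
\end{equation*}
and check that it belongs to the class $\mathfrak{B}$. This rests on two ingredients assembled in this section. First, $\mathcal{B}_0=\mathcal{A}_0^{-1}=O(1):\mathcal{H}_{z,\lambda,r}\to\mathcal{B}_{z,\lambda,r}$, and moreover $\partial_{x'}^{\tilde\alpha}\partial_{\xi'}^{\tilde\beta}\partial_\lambda^{\tilde l}\ad_T^{\tilde k}\mathcal{B}_0=O(\langle\lambda\rangle^{-\tilde l-\tilde k/2})$, obtained by differentiating the identity $\mathcal{A}_0\mathcal{B}_0=\Id$ and using the corresponding estimates for $\mathcal{A}_0$. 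Second, the class of formal operators of type $\mathfrak{A}$ (resp.\ $\mathfrak{B}$) is closed under composition, with the composite $\mathfrak{A}'\mathcal{B}_0$ gaining an extra power of $h^{1/3}$: the composition formula $\sum_\alpha\frac{1}{\alpha!}(h\partial_{\xi''})^{\alpha''}(h\partial_{\xi_1}+h^{1/3}\partial_\lambda)^{\alpha_1}(\,\cdot\,)D_{x'}^\alpha$, together with the $\ad_T$ calculus, keeps all factors within the list of allowed generators and multiplies the $\lambda$-order by the expected power of $\langle\lambda\rangle$. Hence $(-\mathfrak{A}'\mathcal{B}_0)^N$ carries a factor $h^{N/3}$ and the series is asymptotically summable, so reading it off term by term produces symbols $\mathcal{B}_{\alpha,j,k,l,m}$ with the required estimates; by construction $\mathfrak{A}\circ\mathfrak{B}=\Id$. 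Building the left inverse $\mathfrak{B}_L=\big(\sum_N(-\mathcal{B}_0\mathfrak{A}')^N\big)\mathcal{B}_0$ in the same way gives $\mathfrak{B}_L\circ\mathfrak{A}=\Id$, and associativity then forces $\mathfrak{B}_L=\mathfrak{B}$, so $\mathfrak{B}$ is a two-sided inverse with principal term $\mathcal{B}_0$.

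The main obstacle is the composition-closure step. The delicate points are: the non-commutativity of the two incarnations $T_{\mathcal B}$, $T_{\mathcal H}$ of multiplication by $t$, so that $\ad_T$ must be read as $T_{\mathcal H}A-AT_{\mathcal B}$ (resp.\ $T_{\mathcal B}B-BT_{\mathcal H}$) and is only a derivation once interpreted this way; the mixed derivative $h\partial_{\xi_1}+h^{1/3}\partial_\lambda$ in the composition formula, which must be shown to send the class to itself — here $h^{1/3}\partial_\lambda$ produces precisely the generator $h^{1/3}\langle\lambda\rangle^{-1}$ times a symbol whose $\lambda$-order has dropped by one; and the bookkeeping of the exponential weights, where one shows the loss $r\to r'$ incurred at each composition, together with the factorials from powers of $T$, is controlled by the accompanying powers of $h$. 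Once these are in place the Neumann-series argument is routine.
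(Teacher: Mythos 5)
Your proof is correct and follows essentially the same approach as the paper: split off the principal term, check that the remainder $\mathfrak{A}'\mathcal{B}_0$ carries a positive power of $h^{1/3}$, form the formal Neumann series $\mathfrak{B}=\mathcal{B}_0\sum_N(-\mathfrak{A}'\mathcal{B}_0)^N$, and verify a two-sided inverse by the standard left/right comparison. The paper packages this slightly differently (setting $\mathfrak{C}=\mathfrak{A}\circ\mathcal{B}_0$ and summing $\sum_N(\Id-\mathfrak{C})^N$), but after unwinding the definitions the formula is identical, so the two arguments are the same.
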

\begin{proof}
Let $\mathfrak{C}=\mathfrak{A}\circ\mathfrak{B}_0$ where $\mathfrak{B}_0=\mathcal{B}_0$, then $\mathfrak{C}$ is as above with $\mathcal{C}_0=\mathcal{A}_0\circ\mathcal{B}_0=\Id$. Therefore we can form the formal Neumann series
$$\mathfrak{D}=\Id+(\Id-\mathfrak{C})
+(\Id-\mathfrak{C})\circ(\Id-\mathfrak{C})+\cdots$$
which again gives a formal operator as above. Then we can simply take $\mathfrak{B}=\mathcal{B}_0\circ\mathfrak{D}$ to get the right inverse. The left inverse can be constructed in the same way and the standard argument shows that the two must have the same formal expansions. And it is clear from the construction that the principal term of $\mathfrak{B}$ is $\mathcal{B}_0$.
\end{proof}

Now applying this lemma to $\mathfrak{P}$, we get an inverse $\mathfrak{E}$. Let $\mathcal{E}=\mathfrak{E}(1)$, we get a parametrix for $\mathcal{P}(z)$ in the region $|R(x',\xi')-w|\leqslant 2C^{-1}$:
\begin{equation}
\mathcal{E}(x',\xi',\lambda,z;h)=\sum_{j,k,l,m\in\mathbb{N}}
(h^{2/3}T)^j(h^{2/3}\langle\lambda\rangle^{-1/2})^k
(h^{1/3}\langle\lambda\rangle^{-1})^lh^m
\mathcal{E}_{0,j,k,l,m}(x',\xi',\lambda,z)
\end{equation}
with
\begin{equation}
\partial_{x'}^{\tilde{\alpha}}\partial_{\xi'}^{\tilde{\beta}}
\partial_\lambda^{\tilde{l}}\ad_T^{\tilde{k}}
\mathcal{E}_{0,j,k,l,m}
=O(\langle\lambda\rangle^{-\tilde{l}-\tilde{k}/2})
:\mathcal{H}_{z,\lambda,r}\to\mathcal{B}_{z,\lambda,r}.
\end{equation}

In particular, the principal term is exactly $\mathcal{E}_0$ as we constructed in the previous section.

\subsection{Analysis away from the glancing hypersurface}
Now we deal with the region $|R(x',\xi')-w|>C^{-1}$. In this case, $Q\ll|\lambda|=h^{-2/3}|R-w|$ so that we are working with the second model operator in last section where we regard $tQ(h^{2/3}t,x',\xi')$ also as a perturbation. Let
$$P_0^\#=e^{-2\pi i/3}D_t^2+\lambda,\;\;\; \lambda=h^{-2/3}(R(x',\xi')-w),$$
and $R_\pm$ as before. The operator-valued symbol
\begin{equation}
\mathcal{P}_0^\#(z)=
\left(
  \begin{array}{cc}
    P_0^\#-z & R_- \\
    \gamma_1 & 0 \\
    R_+ & 0 \\
  \end{array}
\right):\mathcal{B}_{\lambda,r}^\#\to\mathcal{H}_{\lambda,r}^\#
\end{equation}
is uniformly invertible with inverse $\mathcal{E}_0^\#(z)$ since $|\lambda|\geqslant h^{-2/3}/C\gg|\Re z|$. Moreover,
\begin{equation*}
\mathcal{P}_0^\#(z)\in S_{\Sigma_w,2/3}(\partial\mathcal{O};1,
\mathcal{L}(\mathcal{B}_{\lambda,r}^\#,\mathcal{H}_{\lambda,r}^\#)).
\end{equation*}

Recall the definition for the symbol class that away from the glancing hypersurface, the symbol behaves classically and we do not need to specify the derivative in $\lambda$. However, we need to consider the possibility that $\xi'$ may get large. More precisely, the symbol properties for $\mathcal{P}_0^\#$ and $\mathcal{E}_0^\#$ are given by
\begin{equation*}
\partial_{x'}^\alpha\partial_{\xi'}^\beta\ad_T^k\mathcal{P}_0^\#(z)
=O(\langle\xi'\rangle^{-|\beta|}\langle\lambda\rangle^{k/2})
:\mathcal{B}_{\lambda,r}^\#\to\mathcal{H}_{\lambda,r}^\#;
\end{equation*}
\begin{equation*}
\partial_{x'}^\alpha\partial_{\xi'}^\beta\ad_T^k\mathcal{P}_0^\#(z)
=O(\langle\xi'\rangle^{-|\beta|}\langle\lambda\rangle^{-k/2})
:\mathcal{H}_{\lambda,r}^\#\to\mathcal{B}_{\lambda,r}^\#;
\end{equation*}
where we notice that $|\lambda|^{-k/2}\sim(h^{-1/3}\langle\xi'\rangle)^{-k}$ and $Q(0,x',\xi')=O(h^{2/3})|\lambda|$. For the lower order term in the expansion
\begin{equation*}
\mathcal{P}(z)\equiv
h^{2/3}\mathcal{K}_0+
\sum_{j=0}^\infty(h^{2/3}T)^j\mathcal{P}_j^\#(x,\xi,z;h)
\end{equation*}
with $T$, $\mathcal{K}_0$ as before and
\begin{equation*}
\partial_{x'}^\alpha\partial_{\xi'}^\beta\ad_T^k\mathcal{P}_j^\#
=O(1)\langle\xi'\rangle^{-|\beta|}(h^{1/3}\langle\xi'\rangle^{-1})^k
:\mathcal{B}_{\lambda,r}^\#\to\mathcal{H}_{\lambda,r}^\#.
\end{equation*}

We proceed exactly as before to define the associated formal operator
\begin{equation*}
\mathfrak{P}^\#=\sum_{\alpha\in\mathbb{N}^{n-1}}\frac{1}{\alpha!}
((h\partial_{\xi'})^\alpha\mathcal{P})D_{x'}^\alpha.
\end{equation*}
This motivate us to consider the general class of formal operators of the form
\begin{equation}
\mathfrak{A}^\#=\sum_{\alpha\in\mathbb{N}^{n-1},j,k\in\mathbb{N}}
(h^{2/3}T)^j(h\langle\xi'\rangle)^k
\mathcal{A}^\#_{\alpha,j,k}(x',\xi',z;h)D_{x'}^\alpha
\end{equation}
with
\begin{equation}
\partial_{x'}^{\tilde{\alpha}}\partial_{\xi'}^{\tilde{\beta}}
\ad_T^{\tilde{k}}\mathcal{A}_{\alpha,j,k}
=O(1)\langle\xi'\rangle^{-|\tilde{\beta}|}
(h^{1/3}\langle\xi'\rangle^{-1})^{\tilde{k}}
:\mathcal{B}_{\lambda,r}^\#\to\mathcal{H}_{\lambda,r}^\#.
\end{equation}
So we see that $\mathfrak{P}^\#$ is in this class. The same argument as in the case near the glancing hypersurface shows that $\mathfrak{P}^\#$ has a formal inverse $\mathfrak{E}^\#$ of the same form satisfying the estimates with $\mathcal{H}^\#$ and $\mathcal{B}^\#$ exchanged. Therefore we have an inverse of $\mathcal{P}(z)$ in the region $|R(x',\xi')-w|\geqslant C^{-1}$,
\begin{equation}
\mathcal{E}^\#(x',\xi',z;h)=\mathfrak{E}^\#(1)=
\sum_{j,k\in\mathbb{N}}
(h^{2/3}T)^j(h\langle\xi'\rangle)^k
\mathcal{E}^\#_{j,k}(x',\xi',z;h)
\end{equation}
with the following mapping properties
\begin{equation}
\partial_{x'}^\alpha\partial_{\xi'}^\beta\ad_T^{\tilde{k}}
\mathcal{E}^\#_{j,k}=O(1)\langle\xi'\rangle^{-|\beta|}
(h^{1/3}\langle\xi'\rangle)^k
:\mathcal{H}^\#_{\lambda,r}\to\mathcal{B}^\#_{\lambda,r}.
\end{equation}

\subsection{Analysis in the intermediate region}
\label{sec:inter}
In the intermediate region $C^{-1}\leqslant|R(x',\xi')-w|\leqslant2C^{-1}$, we observe that both cases reduce to the simpler expansions that coincide with each other.
The key point is that in this region both $\lambda$ and $\xi'$ will be irrelevant. In fact, $|\xi'|$ is bounded and $\lambda\sim h^{-2/3}$. Therefore we have the expansions
\begin{equation*}
\mathcal{E}(x',\xi',z;h)
=\sum_{j,k\in\mathbb{N}}(h^{2/3}T)^jh^k
\mathcal{E}_{j,k}(x',\xi',z;h)
\end{equation*}
where
\begin{equation*}
\partial_{x'}^\alpha\partial_{\xi'}^\beta\ad_T^{\tilde{k}}
\mathcal{E}_{j,k}=O(h^{k/3})
:\mathcal{H}_{z,\lambda,r}\to\mathcal{B}_{z,\lambda,r};
\end{equation*}
and
\begin{equation*}
\mathcal{E}^\#(x',\xi',z;h)
=\sum_{j,k\in\mathbb{N}}(h^{2/3}T)^jh^k
\mathcal{E}^\#_{j,k}(x',\xi',z;h)
\end{equation*}
where
\begin{equation*}
\partial_{x'}^\alpha\partial_{\xi'}^\beta\ad_T^{\tilde{k}}
\mathcal{E}^\#_{j,k}=O(h^{k/3})
:\mathcal{H}_{\lambda,r}^\#\to\mathcal{B}_{\lambda,r}^\#.
\end{equation*}
Of course the same is true for $\mathcal{P}$ with $\mathcal{B}$ and $\mathcal{H}$ exchanged. Therefore if we introduce spaces $\mathcal{B}$ and $\mathcal{H}$ which agrees with $\mathcal{B}_{z,\lambda,r}$ and $\mathcal{H}_{z,\lambda,r}$ microlocally in $|R(x',\xi')-w|<2C^{-1}$, also agrees with $\mathcal{B}_{\lambda,r}^\#$ and $\mathcal{H}_{\lambda,r}^\#$ microlocally in $|R(x',\xi')-w|>C^{-1}$. Then this coincidence on the intermediate region shows that the symbol $\mathcal{P}$ and $\mathcal{E}$ satisfies the global construction at least near the boundary.

\section{Global Grushin problems}
\label{sec:global}

\subsection{Estimates away from the boundary}
We begin by recalling the following estimates away from the boundary. Let
\begin{equation}
D(\alpha)=\{x\in\mathbb{R}^n\setminus\mathcal{O}:
d(x,\partial\mathcal{O})>\alpha\},
\end{equation}
and $H_h^k(\Omega)$ be the semiclassical Sobolev space on an open set $\Omega\subset\mathbb{R}^n$ (or on a compact manifold which we shall set to be $\partial\mathcal{O}$ later). Then in \cite[section 7]{SZ6}, the following proposition is proved.

\begin{prop}
Let $0<\epsilon<\frac{2}{3}$, $|\Re z|\leqslant L$, $|\Im z|\leqslant C$, then there exists $h_0=h_0(L)$ such that for $0<h<h_0(L)$, there exists maps $E_\epsilon, K_\epsilon$ defined on $C^\infty_c(D(h^\epsilon))$, with the properties $(P-z)E_{\epsilon}=I+K_{\epsilon}$ and
\begin{equation}
\begin{split}
E_\epsilon=&\;O(h^{2/3-\epsilon}): L^2(D(h^\epsilon))\to H_h^2(\mathbb{R}^n\setminus\mathcal{O}),\\
K_\epsilon=&\;O(e^{-C^{-1}h^{-1+\frac{3\epsilon}{2}}}): L^2(D(h^{\epsilon}))\to H_h^k(\mathbb{R}^n\setminus\mathcal{O}),\;\; \forall k\in\mathbb{R}.\\
\end{split}
\end{equation}
Moreover, for any fixed $\gamma\in(0,1)$, we can construct $E_\epsilon$ and $K_\epsilon$ such that for $u\in C_c^\infty(D(h^\epsilon))$, $E_\epsilon u$ and $K_\epsilon u$ are supported in $D((1-\gamma)h^\epsilon)$.
\end{prop}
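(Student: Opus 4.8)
The plan is to follow \cite[\S\,7]{SZ6}, exploiting that, after complex scaling, $P-z$ is semiclassically elliptic on $D(h^\epsilon)$ with a lower bound that degenerates only like $h^\epsilon$. Work in the normal geodesic coordinates $(x',x_n)$ and rescale $t=h^{-2/3}x_n$, $\lambda=h^{-2/3}(R(x',\xi')-w)$, so that near the boundary $P-z$ takes the form \eqref{op:scale}, with normal part $e^{-2\pi i/3}(D_t^2+2tQ(h^{2/3}t,x',hD_{x'}))$. On $D(h^\epsilon)$ one has $t\geqslant T:=h^{\epsilon-2/3}$, which tends to $+\infty$ since $\epsilon<2/3$. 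Because $Q$ is bounded above and below on the sphere bundle by strict convexity while $|\Im z|\leqslant C$ and $2QT\gg C$ once $h$ is small, the full symbol $p-z$ of $P-z$ obeys, in the region $x_n\geqslant(1-\gamma)h^\epsilon$,
\begin{equation*}
|p(x,\xi)-z|\geqslant C^{-1}h^{\epsilon-2/3}\langle\xi\rangle^2
\end{equation*}
uniformly for $|\Re z|\leqslant L$ when $h<h_0(L)$: the imaginary part, comparable to $-h^{-2/3}(\xi_n^2+2x_nQ)\lesssim -h^{-2/3}(\xi_n^2+h^\epsilon|\xi'|^2)$, controls the glancing direction; the real part, which is $\leqslant-h^{-2/3}w/2<0$ for $|\xi'|$ small, controls the remaining bounded low frequencies; and $R\sim|\xi'|^2$ controls $|\xi'|\to\infty$. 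Further from the boundary, where the scaling angle is the fixed $\theta_0>0$, the same estimate holds with $h^\epsilon$ replaced by a constant, so that region is strictly easier.

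Given this ellipticity, one inverts the symbol in the (exotic) calculus of Section \ref{sec:second}: with $q_0=(p-z)^{-1}$ one has $|q_0|\lesssim h^{2/3-\epsilon}\langle\xi\rangle^{-2}$, hence a leading parametrix that is $O(h^{2/3-\epsilon}):L^2\to H_h^2$, and correcting by the usual Neumann series produces $E_\epsilon^0$ with $(P-z)E_\epsilon^0=I+K_\epsilon^0$, $K_\epsilon^0=O(h^\infty)$. Equivalently, and this is the route that yields the support statement, for each frozen $(x',\xi')$ one solves the normal equation $e^{-2\pi i/3}(D_t^2+2tQ)v+(\lambda-z)v=f$ on $(T,\infty)$; its Green's function is built from translates of $\Ai$, and the resolvent bound $O(T^{-1})=O(h^{2/3-\epsilon})$ is exactly the quantitative ellipticity above.

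The only delicate step is to upgrade $K_\epsilon^0=O(h^\infty)$ to $K_\epsilon=O(\exp(-C^{-1}h^{-1+3\epsilon/2}))$, and here the Airy (equivalently, analytic) structure of the complex-scaled operator is essential. A solution of the normal equation that is subdominant at $t=+\infty$ decays like $\exp\!\big(-\tfrac{2}{3}(2Q)^{1/2}t^{3/2}(1+o(1))\big)$ by \eqref{asy:Airy-com}, so at the relevant scale $t\sim T=h^{\epsilon-2/3}$ it has size $O(\exp(-c\,T^{3/2}))=O(\exp(-c\,h^{3\epsilon/2-1}))$, with an even faster rate as $|\xi'|\to\infty$. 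Arranging the parametrix so that all its errors are localized near $t=T$ and estimating them by this super-exponential decay, rather than by powers of $h$, yields the stated remainder; concretely one sums the symbol expansion in the Gevrey-$1$ (analytic) sense and truncates optimally at order $\sim h^{3\epsilon/2-1}$, which is legitimate because $-h^2\Delta_\Gamma$ is the restriction of the holomorphic operator $-\Delta_z$ and so has a real-analytic symbol. This is the main obstacle: one must verify that this analytic bookkeeping is compatible with the loss of $h^{-2/3}$ per transversal derivative built into the symbol class of Section \ref{sec:second}.

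For the support property, fix cutoffs $\chi_0,\chi_1\in C^\infty(\mathbb{R}^n\setminus\mathcal{O})$ with $\chi_0\equiv1$ on $D((1-\gamma/2)h^\epsilon)$, $\supp\chi_0\cup\supp\chi_1\subset D((1-\gamma)h^\epsilon)$ and $\chi_1\equiv1$ near $\supp\chi_0$, and set $E_\epsilon=\chi_1 E_\epsilon^0\chi_0$; on $C_c^\infty(D(h^\epsilon))$ the factor $\chi_0$ is harmless. Then $(P-z)E_\epsilon=\chi_1(I+K_\epsilon^0)\chi_0+[P,\chi_1]E_\epsilon^0\chi_0$; the first term is $I$ plus an exponentially small operator, while $[P,\chi_1]$ is supported in the collar $\{(1-\gamma)h^\epsilon<x_n<(1-\gamma/2)h^\epsilon\}$, at distance $\gtrsim\gamma h^\epsilon$ from $\supp\chi_0$, so by the same super-exponential decay of the Airy Green's function across that gap its contribution is $O(\exp(-C^{-1}h^{-1+3\epsilon/2}))$ in every $H_h^k$. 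Absorbing these terms into $K_\epsilon$ completes the proof, with $E_\epsilon u$ and $K_\epsilon u$ supported in $D((1-\gamma)h^\epsilon)$ by construction.
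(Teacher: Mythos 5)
The paper does not prove this proposition; it quotes it verbatim from \cite[Section 7]{SZ6}, so there is no in-paper proof to compare against. Judging your sketch on its own terms: the ellipticity estimate on $D(h^\epsilon)$, the Airy/Agmon origin of the exponent $h^{-1+3\epsilon/2}=\tfrac{2}{3}h^{-1}(h^\epsilon)^{3/2}$, and the cutoff construction giving the support statement are all the right ingredients and are essentially the ones used in \cite{SZ6}.

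There is, however, a genuine gap at the critical step of upgrading the remainder from $O(h^\infty)$ to $O(e^{-C^{-1}h^{-1+3\epsilon/2}})$. You propose to do this by optimal (Gevrey-$1$) truncation of the symbol expansion, citing the exotic calculus of Section~\ref{sec:second}. But the calculus of Section~\ref{sec:second} is a second microlocalization along $\Sigma_w\subset T^\ast\partial\mathcal{O}$ built for the boundary Grushin problem; it is not the calculus governing the parametrix in the interior region $x_n>h^\epsilon$, and its expansion step $h^{1/3}$ produces, after optimal truncation, an error $O(e^{-c\,h^{-1/3}})$ which does not match the stated rate for $\epsilon<4/9$. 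More to the point, the cutoff argument you give also silently uses that $E_\epsilon^0$ has a kernel that decays super-exponentially across a collar of width $\sim\gamma h^\epsilon$; this is not automatic from a symbol-expansion parametrix (classical or analytic). The mechanism that actually produces both the exponentially small commutator term and the exponentially small $K_\epsilon$ is a Carleman/Agmon-type exponential weight estimate for the complex-scaled operator, conjugating $P-z$ by $e^{\varphi/h}$ with $\varphi$ comparable to the Airy Agmon distance $\tfrac{2}{3}x_n^{3/2}$ and using the quantitative ellipticity you established. Without that estimate the sketch does not close, and you flag as much yourself. You should therefore replace the ``sum the analytic symbol expansion optimally'' step by the exponential weight estimate (this is what \cite[Section 7]{SZ6} does), and then both the bound on $K_\epsilon$ and the kernel decay needed for the commutator term come out of a single argument.
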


We remark that we can not use Neumann series and this proposition to give an inverse of $P-z$ since the support of $K_\epsilon u$ is larger than $u$ in general.

\subsection{Setting up for global Grushin problems}

To study the global Grushin problem, we introduce the spaces for $w\in W\Subset(0,\infty)$, $0<\delta\ll1$, $0\leqslant r\leqslant r_0$:
\begin{equation*}
\begin{split}
\mathcal{B}_{w,r,\delta}=&\;H^2(\mathbb{R}^n\setminus\mathcal{O})
\times L^2(\partial\mathcal{O};\mathbb{C}^N),\\
\mathcal{H}_{w,r}=&\;L^2(\mathbb{R}^n\setminus\mathcal{O})\times H^{1/2}(\partial\mathcal{O})\times H^2(\partial\mathcal{O};\mathbb{C}^N).
\end{split}
\end{equation*}
with the norms which coincides the ones introduced in the previous sections in each of the regions we considered. We need to translate the norms to $x_n$-coordinates by the relation $x_n=h^{2/3}t$.

Let
\begin{equation}
\begin{split}
\left\|\left(
         \begin{array}{c}
           u \\
           u_- \\
         \end{array}
       \right)
\right\|_{\mathcal{B}_{w,r,\delta}}=&\;
h^{-2/3}\|e^{r\psi(x_n)/2h^{2/3}}
(hD_{x_n})^2u\|_{L^2(\mathbb{R}^n\setminus\mathcal{O})}\\
&+h^{-2/3}\|e^{r\psi(x_n)/2h^{2/3}}\chi(x_n/\delta)
x_nu\|_{L^2(\mathbb{R}^n\setminus\mathcal{O})}\\
&+\|e^{r\psi(x_n)/2h^{2/3}}\langle x_n\rangle^{-2}\langle h^{-2/3}(-h^2\Delta_{\partial\mathcal{O}}-w)\rangle u\|_{L^2(\mathbb{R}^n\setminus\mathcal{O})}\\
&+h^{-2/3}\|e^{r\psi(x_n)/2h^{2/3}}(1-\chi(x_n/\delta))
u\|_{H_h^2(\mathbb{R}^n\setminus\mathcal{O})}\\
&+h^{1/3}\|u_-\|_{L^2(\partial\mathcal{O};\mathbb{C}^N)},\\
\left\|\left(
         \begin{array}{c}
           v \\
           v_0 \\
           v_+
         \end{array}
       \right)
\right\|_{\mathcal{H}_{w,r}}=&\;\|e^{r\psi(x_n)/2h^{2/3}}
v\|_{L^2(\mathbb{R}^n\setminus\mathcal{O})}\\
&+h^{1/3}\|\langle h^{-2/3}(-h^2\Delta_{\partial\mathcal{O}}-w)\rangle^{1/4}
v_0\|_{L^2(\partial\mathcal{O})}\\
&+h^{1/3}\|\langle h^{-2/3}(-h^2\Delta_{\partial\mathcal{O}}-w)\rangle v_+\|_{L^2(\partial\mathcal{O};\mathbb{C}^N)},
\end{split}
\end{equation}
where the weight function $\psi\in C^\infty([0,\infty);[0,1])$ satisfying $\psi(t)=t$ for $t<\frac{1}{2}$ and $\psi(t)=1$ for $t\geqslant1$; and the cut-off function $\chi\in C^\infty([0,\infty);[0,1])$ satisfying $\chi(t)=1$ for $t<1$ and $\chi(t)=0$ for $t>2$. Here we still use the geodesic normal coordinates $(x',x_n)\in\partial\mathcal{O}\times(0,\infty)$ for $\mathbb{R}^n\setminus\mathcal{O}$ as introduced before.

First we claim that
\begin{equation*}
\left(
  \begin{array}{cc}
    P-z & 0 \\
    \gamma_1 & 0 \\
    0 & 0 \\
  \end{array}
\right):\mathcal{B}_{w,r}\to\mathcal{H}_{w,r}.
\end{equation*}

In fact, we can decompose $u\in H^2(\mathbb{R}^n\setminus\mathcal{O})$ as $u=u_1+u_2$ where $\supp u_1\subset\{x_n\leqslant3\delta\} $ and $\supp u_2\subset\{x_n\geqslant2\delta\}$. Then we see that
\begin{equation*}
\left\|\left(
         \begin{array}{c}
           u \\
           0 \\
         \end{array}
       \right)
\right\|_{\mathcal{B}_{w,r,\delta}}
\sim\left\|\left(
         \begin{array}{c}
           u_1 \\
           0 \\
         \end{array}
       \right)
\right\|_{\mathcal{B}_{w,r,\delta}}+
\left\|\left(
         \begin{array}{c}
           u_2 \\
           0 \\
         \end{array}
       \right)
\right\|_{\mathcal{B}_{w,r,\delta}}.
\end{equation*}
We notice that
\begin{equation*}
\begin{split}
\left\|\left(
         \begin{array}{c}
           u_1 \\
           0 \\
         \end{array}
       \right)
\right\|_{\mathcal{B}_{w,r,\delta}}
\sim &\;h^{-2/3}\|e^{rx_n/2h^{2/3}}
(hD_{x_n})^2u_1\|_{L^2(\mathbb{R}^n\setminus\mathcal{O})}\\
&+h^{-2/3}\|e^{rx_n/2h^{2/3}}\chi(x_n/\delta)
x_nu\|_{L^2(\mathbb{R}^n\setminus\mathcal{O})}\\
&+\|e^{rx_n/2h^{2/3}}\langle h^{-2/3}(-h^2\Delta_{\partial\mathcal{O}}-w)\rangle u\|_{L^2(\mathbb{R}^n\setminus\mathcal{O})},
\end{split}
\end{equation*}
so the estimate
\begin{equation*}
\|e^{r\psi(x_n)/2h^{2/3}}(P-z)u_1\|_{L^2}\leqslant
\left\|\left(
         \begin{array}{c}
           u_1 \\
           0 \\
         \end{array}
       \right)
\right\|_{\mathcal{B}_{w,r,\delta}}
\end{equation*}
follows from the change of variable $x_n=h^{2/3}t$ and the result in \ref{sec:model} (only the boundedness of $P-z:B_{z,\lambda,r}\to L^2_r$). Also notice that
\begin{equation*}
\left\|\left(
         \begin{array}{c}
           u_2 \\
           0 \\
         \end{array}
       \right)
\right\|_{\mathcal{B}_{w,r,\delta}}
\sim h^{-2/3}\|e^{r\psi(x_n)/2h^{2/3}}u_2
\|_{H^2_h(\mathbb{R}^n\setminus\mathcal{O})},
\end{equation*}
so we can easily deduce that
\begin{equation*}
\|e^{r\psi(x_n)/2h^{2/3}}(P-z)u_2\|_{L^2}\leqslant
\left\|\left(
         \begin{array}{c}
           u_2 \\
           0 \\
         \end{array}
       \right)
\right\|_{\mathcal{B}_{w,r,\delta}}.
\end{equation*}
Finally we need to estimate $\gamma u$. We shall use the fact that
\begin{equation*}
\gamma_0=O(h^{-1/2}):H_h^2(\mathbb{R}^n\setminus\mathcal{O})\to
H_h^{3/2}(\partial\mathcal{O})
\end{equation*}
and
\begin{equation*}
h\gamma_1=O(h^{-1/2}):H_h^2(\mathbb{R}^n\setminus\mathcal{O})\to
H_h^{1/2}(\partial\mathcal{O})
\end{equation*}
which follows from the estimates of non-semiclassical restriction operators. Therefore we have
\begin{equation*}
\begin{split}
&\;h^{1/3}\|\langle h^{-2/3}(-h^2\Delta_{\partial\mathcal{O}}-w)\rangle^{1/4}
(\gamma u)\|_{L^2(\partial\mathcal{O})}\\
\leqslant&\; h^{1/6}\|\gamma u\|_{H_h^{1/2}(\partial\mathcal{O})}
\leqslant h^{1/6}\|h^{2/3}\gamma_1u\|_{H_h^{1/2}(\partial\mathcal{O})}
+h^{1/6}\|h^{2/3}k\gamma_0u\|_{H_h^{1/2}(\partial\mathcal{O})}\\
\leqslant &\; h^{5/6}\|\gamma_1u\|_{H_h^{1/2}(\partial\mathcal{O})}
+Ch^{5/6}\|\gamma_0u\|_{H_h^{3/2}(\partial\mathcal{O})}\\
\leqslant &\; Ch^{-2/3}\|u\|_{H_h^2(\mathbb{R}^n\setminus\mathcal{O})}.
\end{split}
\end{equation*}

Now we need to correct this operator with
\begin{equation*}
R_{+,w}:H^2(\mathbb{R}^n\setminus\mathcal{O})\to L^2(\partial\mathcal{O};\mathbb{C}^N),
\end{equation*}
and
\begin{equation*}
R_{-,w}:L^2(\partial\mathcal{O};\mathbb{C}^N)\to L^2(\mathbb{R}^n\setminus\mathcal{O}).
\end{equation*}
They are obtained by quantizing the symbols appeared in section \ref{sec:model}. Let $e^{\lambda,\delta}_{j,\mu}$ be as in \eqref{def:mueigens}, then we shall define
\begin{equation}
R_{+,w}=\Op_{\Sigma_w,h}(\tilde{e}_w^\delta)
:L^2(\mathbb{R}^n\setminus\partial\mathcal{O})\to L^2(\partial\mathcal{O};\mathbb{C}^N),
\end{equation}
where
\begin{equation*}
\tilde{e}_w^\delta\in S_{\Sigma_w,2/3}(\partial\mathcal{O};1,
\mathcal{L}(L^2[0,\infty);\mathbb{C}^N))
\end{equation*}
is given by
\begin{equation*}
\tilde{e}_w^\delta(j)u(p)=\int_0^\infty h^{-1/3}\chi(x_n)e_{j,\mu}^{\lambda,\delta}(h^{-2/3}x_n)u(x_n)dx_n, p\in T^\ast\partial\mathcal{O}
\end{equation*}
with $\lambda=h^{-2/3}(R(p)-w), \mu=Q(0,p)$. Similarly, the operator
$R_{-,w}$ can be defined as the formal adjoint of $R_{+,w}$ or more precisely,
\begin{equation*}
R_{-,w}=\Op_{\Sigma_w,h}((\tilde{e}_w^\delta)^\ast)
:L^2(\mathbb{R}^n\setminus\partial\mathcal{O})\to L^2(\partial\mathcal{O};\mathbb{C}^N),
\end{equation*}
where
\begin{equation*}
(\tilde{e}_w^\delta)^\ast\in S_{\Sigma_w,2/3}(\partial\mathcal{O};1,
\mathcal{L}(\mathbb{C}^N;L^2([0,\infty)))
\end{equation*}
is given by
\begin{equation*}
\tilde{e}_w^\delta u_-(p)=\sum_{j=1}^N
h^{-1/3}\chi(x_n)e_{j,\mu}^{\lambda,\delta}(h^{-2/3}x_n)u_-(j), p\in T^\ast\partial\mathcal{O}.
\end{equation*}

Then we have the Grushin problem for
\begin{equation}
\mathcal{P}_w(z)=
\left(
  \begin{array}{cc}
    P_w-z & R_{-,w} \\
    \gamma_1 & 0 \\
    R_{+,w} & 0 \\
  \end{array}
\right):\mathcal{B}_{w,r}\to\mathcal{H}_{w,r}.
\end{equation}
Our goal is to construct an inverse of $\mathcal{P}_w(z)$ for all $h$ small depending on $\delta$,
\begin{equation}
\mathcal{E}_w(z)=
\left(
  \begin{array}{ccc}
    E_w(z) & K_w(z) & E_{w,+}(z) \\
    E_{w,-}(z) & K_{w,-}(z) & E_{w,-+}(z) \\
  \end{array}
\right):\mathcal{H}_{w,r}\to\mathcal{B}_{w,r}
\end{equation}
where $E_{w,-+}(z)$ has nice properties that will be specified later.

\subsection{Construction of the inverse operator}
To construct the inverse operator, we first separate to three different parts: near the boundary and glancing hypersurface, near the boundary away from the glancing hypersurface and away from the boundary. In this section, we again work with $w=1$ for simplicity and it will be clear that the analysis is uniform for $w$ in a fixed compact subset of $(0,\infty)$.

We consider the case near the boundary and glancing hypersurface first. Let us translate the space $\mathcal{B}_{z,\lambda,r}$ and $\mathcal{H}_{z,\lambda,r}$ in section \ref{sec:model} into the $x_n$-coordinates and scale it by $h^{1/3}$ due to the change of coordinates. In this stage, we drop the dependence on $z$ and introduce the same weight function $\psi$ as previous.
\begin{equation*}
\begin{split}
\left\|\left(
         \begin{array}{c}
           u \\
           u_- \\
         \end{array}
       \right)
\right\|_{\mathcal{B}_{\lambda,r}}=&\;
h^{-2/3}\|e^{r\psi(x_n)/2h^{2/3}}
(hD_{x_n})^2u\|_{L^2([0,\infty))}+h^{-2/3}\|e^{r\psi(x_n)/2h^{2/3}}
x_nu\|_{L^2([0,\infty))}\\
&+\langle\lambda\rangle
\|e^{r\psi(x_n)/2h^{2/3}}u\|_{L^2(\mathbb{R}^n\setminus\mathcal{O})}
+h^{1/3}|u_-|_{\mathbb{C}^N},\\
\left\|\left(
         \begin{array}{c}
           v \\
           v_0 \\
           v_+
         \end{array}
       \right)
\right\|_{\mathcal{H}_{\lambda,r}}=&\;\|e^{r\psi(x_n)/2h^{2/3}}
v\|_{L^2([0,\infty))}
+h^{1/3}\langle\lambda\rangle^{1/4}|v_0|_{\mathbb{C}}
+h^{1/3}\langle\lambda\rangle|v_+|_{\mathbb{C}^N}.
\end{split}
\end{equation*}

\begin{lem}
Let $0<\epsilon<2/3$, $\chi_1\in\Psi^{0,0}(\partial\mathcal{O})$ be such that $\WF_h(\chi_1-\Id)\subset\{m:d(m,\Sigma)\geqslant C\}$ and $\WF_h(\chi_1)\subset\{m:d(m,\Sigma)\leqslant2C\}$. Then there exists $\mathcal{E}^L_1(z),\mathcal{E}^R_1(z)\in\Psi_{\Sigma,2/3}
(\partial\mathcal{O};
1,\mathcal{L}(\mathcal{H}_{\lambda,r},\mathcal{B}_{\lambda,r}))$ such that
\begin{equation*}
\mathcal{E}_1^L(z)\mathcal{P}(z)=
\chi_1\left(
  \begin{array}{cc}
    \chi(x_n/h^\epsilon) & 0 \\
    0 & \Id \\
  \end{array}
\right)+\mathcal{R}^L_1(z),
\end{equation*}
\begin{equation*}
\mathcal{P}(z)\mathcal{E}_1^R(z)=
\chi_1\left(
  \begin{array}{ccc}
    \chi(x_n/h^\epsilon) & 0 & 0 \\
    0 & \Id & 0 \\
    0 & 0 & \Id \\
  \end{array}
\right)+\mathcal{R}^R_1(z),
\end{equation*}
where the remainder terms satisfy
\begin{equation*}
\begin{split}
\mathcal{R}_1^L(z)\in
\Psi_{\Sigma,2/3}(\partial\mathcal{O};h^N\langle\lambda\rangle^{-N},
\mathcal{L}(\mathcal{B}_{\lambda,r},\mathcal{B}_{\lambda,r}))\\
\mathcal{R}_1^R(z)\in
\Psi_{\Sigma,2/3}(\partial\mathcal{O};h^N\langle\lambda\rangle^{-N},
\mathcal{L}(\mathcal{H}_{\lambda,r},\mathcal{H}_{\lambda,r}))
\end{split}
\end{equation*}
for any $N$.
\end{lem}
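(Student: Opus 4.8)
The plan is to build $\mathcal{E}_1^L(z)$ and $\mathcal{E}_1^R(z)$ by quantizing the formal symbolic parametrix produced in Section~\ref{sec:second}, after inserting two cut-offs — the microlocal cut-off $\chi_1$ near $\Sigma$ and a spatial cut-off $\chi(x_n/h^\epsilon)=\chi(h^{2/3-\epsilon}t)$ in the normal variable. The cut-off $\chi_1$ localizes the construction to the region where the near-glancing model of Section~\ref{sec:model} governs; the cut-off $\chi(x_n/h^\epsilon)$ is needed because the normal geodesic coordinates, and the form of $\mathcal{P}(z)$ in them, are only available for $x_n$ in a bounded range, and because the interior construction of Section~\ref{sec:global} operates on $D(h^\epsilon)=\{x_n>h^\epsilon\}$, so the near-boundary parametrix should live on $\{x_n\lesssim h^\epsilon\}$. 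The assumption $\epsilon<2/3$ ensures that $t=h^{-2/3}x_n$ still ranges over $[0,h^{\epsilon-2/3}]$ with $h^{\epsilon-2/3}\to\infty$, so the half-line Airy model is genuinely felt; truncating that half-line at $t\sim h^{\epsilon-2/3}$ costs only $O(e^{-h^{3\epsilon/2-1}/C})$, by the super-exponential decay of the Neumann Airy eigenfunctions $e_j$.

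First I would assemble the formal parametrix. Near $\Sigma$, with $\mu=2Q(x',\xi')$, the principal symbol $\mathcal{P}_0(z)$ of $\mathcal{P}(z)$ is uniformly invertible by Proposition~\ref{prop:airy} (in its $\mu$-dependent form), with inverse $\mathcal{E}_0(z)$ obeying the $\partial_{x'}^{\tilde\alpha}\partial_{\xi'}^{\tilde\beta}\partial_\lambda^{\tilde l}\ad_T^{\tilde k}$-bounds recorded in Section~\ref{sec:second}; the Robin term $\mathcal{K}_0$ and the Taylor coefficients $\mathcal{P}_j,\mathcal{D}_j$ sit lower in the grading introduced there. The Neumann-series lemma of Section~\ref{sec:second} then produces a formal inverse $\mathfrak{E}$ of the associated formal operator $\mathfrak{P}$, with principal term $\mathcal{E}_0(z)$ and $\mathfrak{P}\circ\mathfrak{E}=\mathfrak{E}\circ\mathfrak{P}=\Id$. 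I would Borel-resum $\mathfrak{E}$ in the grading indices $(j,k,l,m)$ (and in the usual $D_{x'}^\alpha$-expansion): once one is microlocally near $\Sigma$, where $\langle\lambda\rangle\le Ch^{-2/3}$, and once $h^{2/3}T$ is paired with the cut-off $\chi(h^{2/3-\epsilon}t)$, each of $h^{2/3}T$, $h^{2/3}\langle\lambda\rangle^{-1/2}$, $h^{1/3}\langle\lambda\rangle^{-1}$, $h$ is bounded by a fixed positive power of $h$, so the series resums — the unboundedness of $T$ being absorbed, as in Section~\ref{sec:second}, by relaxing the weight from $r$ to some $r'<r$. One obtains an honest operator-valued symbol $\mathcal{E}\in S_{\Sigma,2/3}(\partial\mathcal{O};1,\mathcal{L}(\mathcal{H}_{\lambda,r},\mathcal{B}_{\lambda,r}))$ with principal symbol $\mathcal{E}_0(z)$ such that $\mathcal{E}\#\mathcal{P}=\Id_{\mathcal{B}}+\mathcal{R}'$ and $\mathcal{P}\#\mathcal{E}=\Id_{\mathcal{H}}+\mathcal{R}''$ with $\mathcal{R}',\mathcal{R}''\in\bigcap_N S_{\Sigma,2/3}(\partial\mathcal{O};h^N\langle\lambda\rangle^{-N},\,\cdot\,)$. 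One new point: the $x_n$-coefficients of $\mathcal{P}(z)$ are genuine smooth functions, not the polynomials used to form $\mathfrak{P}$; writing the discrepancy past order $N'$ as $x_n^{N'+1}\mathcal{S}_{N'}=h^{2(N'+1)/3}T^{N'+1}\mathcal{S}_{N'}$ with $\mathcal{S}_{N'}$ of the same second-microlocal type, the factor $h^{2(N'+1)/3}$ — equivalently, composition with $\chi(h^{2/3-\epsilon}t)$, which turns $x_n^{N'+1}$ into $O(h^{(N'+1)\epsilon})$ — makes this contribution $O(h^N\langle\lambda\rangle^{-N})$ on $\{\langle\lambda\rangle\le Ch^{-2/3}\}$ once $N'$ is large enough, so it is absorbed into $\mathcal{R}',\mathcal{R}''$.

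Next I would set $\mathcal{E}_1^L(z)=\chi_1\,\Op_{\Sigma,h}\bigl(\chi(x_n/h^\epsilon)\,\mathcal{E}\bigr)$ and $\mathcal{E}_1^R(z)=\Op_{\Sigma,h}\bigl(\mathcal{E}\,\chi(x_n/h^\epsilon)\bigr)\,\chi_1$, where $\chi(x_n/h^\epsilon)$ multiplies the operator-valued symbol $\mathcal{E}$ on the left, respectively the right, in the $t$-variable. Since $\chi(x_n/h^\epsilon)$ is a bounded multiplication operator in $t$ that commutes with $T$ and is independent of $(x',\xi',\lambda)$, these operations preserve $S_{\Sigma,2/3}(\partial\mathcal{O};1,\mathcal{L}(\mathcal{H}_{\lambda,r},\mathcal{B}_{\lambda,r}))$ — in particular the $\ad_T$-stability — so $\mathcal{E}_1^L(z),\mathcal{E}_1^R(z)\in\Psi_{\Sigma,2/3}(\partial\mathcal{O};1,\mathcal{L}(\mathcal{H}_{\lambda,r},\mathcal{B}_{\lambda,r}))$. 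Because $\chi(x_n/h^\epsilon)$ does not depend on $(x',\xi',\lambda)$ it pulls out of the $\#$-composition, so $(\chi(x_n/h^\epsilon)\mathcal{E})\#\mathcal{P}=\chi(x_n/h^\epsilon)(\mathcal{E}\#\mathcal{P})$, and hence, modulo $\Psi^{-\infty}$,
\begin{equation*}
\mathcal{E}_1^L(z)\mathcal{P}(z)=\chi_1\,\Op_{\Sigma,h}\bigl(\chi(x_n/h^\epsilon)(\Id_{\mathcal{B}}+\mathcal{R}')\bigr)=\chi_1\begin{pmatrix}\chi(x_n/h^\epsilon)&0\\0&\Id\end{pmatrix}+\mathcal{R}_1^L(z),
\end{equation*}
where $\mathcal{R}_1^L(z)=\chi_1\,\Op_{\Sigma,h}(\chi(x_n/h^\epsilon)\mathcal{R}')$ (up to a smoothing term) lies in $\bigcap_N\Psi_{\Sigma,2/3}(\partial\mathcal{O};h^N\langle\lambda\rangle^{-N},\mathcal{L}(\mathcal{B}_{\lambda,r},\mathcal{B}_{\lambda,r}))$; the block-diagonal form records that $\chi(x_n/h^\epsilon)$ acts only on the $B_{\lambda,r}$-factor of $\mathcal{B}_{\lambda,r}=B_{\lambda,r}\times\mathbb{C}^N$ and commutes with $\chi_1$. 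Symmetrically, $\mathcal{P}(z)\mathcal{E}_1^R(z)=\chi_1\,\diag(\chi(x_n/h^\epsilon),\Id,\Id)+\mathcal{R}_1^R(z)$ with $\mathcal{R}_1^R(z)$ built from $\mathcal{R}''$, lying in $\bigcap_N\Psi_{\Sigma,2/3}(\partial\mathcal{O};h^N\langle\lambda\rangle^{-N},\mathcal{L}(\mathcal{H}_{\lambda,r},\mathcal{H}_{\lambda,r}))$.

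The main obstacle is the bookkeeping in these two reduction steps: one must check that the Borel resummation can be performed within the operator-valued second-microlocal class — keeping uniform control of all mixed derivatives $\partial_{x'}^{\tilde\alpha}\partial_{\xi'}^{\tilde\beta}\partial_\lambda^{\tilde l}\ad_T^{\tilde k}$ and correctly interpreting $\ad_T$ as $T_{\mathcal{H}}(\,\cdot\,)-(\,\cdot\,)T_{\mathcal{B}}$ on the operator-valued symbols being summed — and that inserting the two cut-offs (the spatial one interacting with both the $T^j$-series and the Taylor remainder of the coefficients) neither leaves the class nor generates errors worse than $O(h^N\langle\lambda\rangle^{-N})$ for each $N$. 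With these in hand, the two identities follow from the composition calculus of Section~\ref{sec:second} together with Propositions~\ref{prop:airy} and~\ref{prop:airy:e-+}.
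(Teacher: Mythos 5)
Your proof takes essentially the same route as the paper: build a microlocal parametrix $\tilde{\mathcal E}_1$ near $\Sigma$ from the formal inverse $\mathfrak{E}$ of Section~\ref{sec:second}, then multiply on the appropriate sides by the microlocal cutoff $\chi_1$ and the spatial cutoff $\chi(x_n/h^\epsilon)$, and observe that on the support of these cutoffs one has $x_n\lesssim h^\epsilon$ and $\langle\lambda\rangle\lesssim h^{-2/3}$, which makes the tail of the formal series negligible. The paper packages the key mechanism a bit more cleanly: it records the remainder identity $A\tilde{R}^L_1 = \mathrm{diag}(x_n^k,0)\,B_k^L + h^kA_k^L$ for any $k$ and any $A$ microsupported in $\{d(\cdot,\Sigma)\le C\}$, so that applying $\chi(x_n/h^\epsilon)$ turns $x_n^k$ into $O(h^{\epsilon k})$ and one is done for each $N$ by taking $k$ large. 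Your Borel-resummation phrasing aims at the same conclusion but is slightly circular — you first claim the resummed remainder is already in $\bigcap_N S_{\Sigma,2/3}(\,\cdot\,;h^N\langle\lambda\rangle^{-N},\cdot)$, which is not quite right before the $\chi(x_n/h^\epsilon)$ cutoff is applied (the tail still carries unbounded powers of $T$), and then you apply the cutoff again in the definition of $\mathcal{E}_1^L$. The cleaner bookkeeping is the paper's: truncate the formal series at a finite order $k$, keep track of the remainder in the form $x_n^kB_k^L+h^kA_k^L$, and let the spatial cutoff absorb the $x_n^k$ piece. With that reorganization your argument is correct and coincides with the paper's.
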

\begin{proof}
From the previous section, we can construct an operator $\tilde{\mathcal{E}}_1\in\Psi_{\Sigma,2/3}(\partial\mathcal{O};1,
\mathcal{L}(\mathcal{H}_{\lambda,r},\mathcal{B}_{\lambda,r}))$ with
$\WF_h(\tilde{\mathcal{E}}_1)\subset\{m:d(m,\Sigma)\leqslant2C\}$ such that
\begin{equation*}
\tilde{\mathcal{E}}_1(z)\mathcal{P}(z)=\Id+\tilde{R}_1^L(z),\;\;\;
\mathcal{P}(z)\tilde{\mathcal{E}}_1(z)=\Id+\tilde{R}_1^R(z).
\end{equation*}
Here the remainder term $\tilde{R}_1^L$ satisfies that for any $A\in\Psi^{0,0}(\partial\mathcal{O})$ with $\WF_h(A)\subset\{m:d(m,\Sigma)\leqslant C\}$ and any $k$,
\begin{equation*}
A\tilde{R}_1^L=
\left(
  \begin{array}{cc}
    x_n^k & 0  \\
    0 & 0 \\
  \end{array}
\right)B_k^L+h^kA_k^L,
\end{equation*}
with
\begin{equation*}
A_k^L,B_k^L\in\Psi_{\Sigma,2/3}(\partial\mathcal{O};1,
\mathcal{L}(\mathcal{B}_{\lambda,r},\mathcal{B}_{\lambda,r})).
\end{equation*}
We notice that for $0<\epsilon<2/3$, the operator
\begin{equation*}
\left(
  \begin{array}{cc}
    \chi(x_n/h^\epsilon) & 0 \\
    0 & \Id \\
  \end{array}
\right)
\end{equation*}
is bounded on $\mathcal{B}_{\lambda,r}$. In fact, in $t$ coordinates, this becomes $\chi(h^{2/3-\epsilon}t)$ whose derivatives are all bounded. Therefore we can set
\begin{equation*}
\mathcal{E}_l^L(z)=\chi_1
\left(
  \begin{array}{cc}
    \chi(x_n/h^\epsilon) & 0 \\
    0 & \Id \\
  \end{array}
\right)\tilde{\mathcal{E}}_1(z).
\end{equation*}
Since $\langle\lambda\rangle=O(h^{-2/3})$, it is clear that this operator satisfies the condition. Similarly, we can construct
\begin{equation*}
\mathcal{E}_l^R(z)=\tilde{\mathcal{E}}_1(z)\chi_1
\left(
  \begin{array}{ccc}
    \chi(x_n/h^\epsilon) & 0 & 0\\
    0 & \Id & 0\\
    0 & 0 & \Id \\
  \end{array}
\right).
\end{equation*}
\end{proof}

Now for the case near the boundary but away from the glancing hypersurface, the spaces $\mathcal{H}_{\lambda,r}^\#$ and $\mathcal{B}_{\lambda,r}^\#$ becomes
\begin{equation*}
\begin{split}
\left\|\left(
         \begin{array}{c}
           u \\
           u_- \\
         \end{array}
       \right)
\right\|_{\mathcal{B}_{\lambda,r}^\#}=&\;
h^{-2/3}\|e^{r\psi(x_n)/2h^{2/3}}
(hD_{x_n})^2u\|_{L^2([0,\infty))}+\langle\lambda\rangle
\|e^{r\psi(x_n)/2h^{2/3}}u\|_{L^2(\mathbb{R}^n\setminus\mathcal{O})}
+h^{1/3}|u_-|_{\mathbb{C}^N},\\
\left\|\left(
         \begin{array}{c}
           v \\
           v_0 \\
           v_+
         \end{array}
       \right)
\right\|_{\mathcal{H}_{\lambda,r}}=&\;\|e^{r\psi(x_n)/2h^{2/3}}
v\|_{L^2([0,\infty))}
+h^{1/3}\langle\lambda\rangle^{1/4}|v_0|_{\mathbb{C}}
+h^{1/3}\langle\lambda\rangle|v_+|_{\mathbb{C}^N},
\end{split}
\end{equation*}
in the $x_n$-coordinates. In this situation, we have

\begin{lem}
Let $0<\epsilon<2/3$, $\chi_2\in\Psi^{0,0}(\partial\mathcal{O})$ be such that $\WF_h(\chi_2-\Id)\subset\{m:d(m,\Sigma)\leqslant C\}$ and $\WF_h(\chi_2)\subset\{m:d(m,\Sigma)\geqslant\frac{1}{2}C\}$. Then there exists $\mathcal{E}^L_2(z),\mathcal{E}^R_2(z)\in\Psi_{\Sigma,2/3}
(\partial\mathcal{O};
1,\mathcal{L}(\mathcal{H}_{\lambda,r},\mathcal{B}_{\lambda,r}))$ such that
\begin{equation*}
\mathcal{E}_2^L(z)\mathcal{P}(z)=
\chi_2\left(
  \begin{array}{cc}
    \chi(x_n/h^\epsilon) & 0 \\
    0 & \Id \\
  \end{array}
\right)+\mathcal{R}^L_2(z),
\end{equation*}
\begin{equation*}
\mathcal{P}(z)\mathcal{E}_2^R(z)=
\chi_2\left(
  \begin{array}{ccc}
    \chi(x_n/h^\epsilon) & 0 & 0 \\
    0 & \Id & 0 \\
    0 & 0 & \Id \\
  \end{array}
\right)+\mathcal{R}^R_2(z),
\end{equation*}
where the remainder terms satisfy
\begin{equation*}
\begin{split}
\mathcal{R}_2^L(z)\in
\Psi_{\Sigma,2/3}(\partial\mathcal{O};h^N\langle\lambda\rangle^{-N},
\mathcal{L}(\mathcal{B}_{\lambda,r}^\#,\mathcal{B}_{\lambda,r}^\#))\\
\mathcal{R}_2^R(z)\in
\Psi_{\Sigma,2/3}(\partial\mathcal{O};h^N\langle\lambda\rangle^{-N},
\mathcal{L}(\mathcal{H}_{\lambda,r}^\#,\mathcal{H}_{\lambda,r}^\#))
\end{split}
\end{equation*}
for any $N$.
\end{lem}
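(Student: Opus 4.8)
The plan is to follow the proof of the preceding lemma essentially word for word, replacing the model near the glancing hypersurface by the ``easy'' model away from it. Concretely, the role of $\mathcal{P}_0(z),\mathcal{E}_0(z)$ and of the formal calculus $\mathfrak{P},\mathfrak{E}$ is now played by $\mathcal{P}_0^\#(z),\mathcal{E}_0^\#(z)$ from Proposition~\ref{prop:easy} together with the formal operators $\mathfrak{P}^\#,\mathfrak{E}^\#$ constructed in the analysis away from the glancing hypersurface, and the spaces $\mathcal{B}_{\lambda,r},\mathcal{H}_{\lambda,r}$ are replaced by $\mathcal{B}_{\lambda,r}^\#,\mathcal{H}_{\lambda,r}^\#$. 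First I would take the formal inverse $\mathfrak{E}^\#$ of $\mathfrak{P}^\#$ (which exists because the principal symbol $\mathcal{P}_0^\#(z)$ is uniformly invertible on the relevant region by Proposition~\ref{prop:easy}), quantize it by $\Op_{\Sigma,h}$ and cut off its semiclassical wavefront set to $\{m:d(m,\Sigma)\geqslant\tfrac12 C\}$, obtaining an operator $\tilde{\mathcal{E}}_2(z)\in\Psi_{\Sigma,2/3}(\partial\mathcal{O};1,\mathcal{L}(\mathcal{H}_{\lambda,r}^\#,\mathcal{B}_{\lambda,r}^\#))$ with $\tilde{\mathcal{E}}_2(z)\mathcal{P}(z)=\Id+\tilde{R}_2^L(z)$ and $\mathcal{P}(z)\tilde{\mathcal{E}}_2(z)=\Id+\tilde{R}_2^R(z)$ microlocally in that region.

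Next I would record the structure of the remainders exactly as in the previous lemma: since $\tilde{\mathcal{E}}_2$ comes from the formal inverse, for every $A\in\Psi^{0,0}(\partial\mathcal{O})$ with $\WF_h(A)\subset\{d(m,\Sigma)\geqslant C\}$ and every $k$ one gets
\begin{equation*}
A\tilde{R}_2^L=\left(\begin{array}{cc} x_n^k & 0 \\ 0 & 0 \end{array}\right)B_k^\#+h^k A_k^\#,
\end{equation*}
with $A_k^\#,B_k^\#\in\Psi_{\Sigma,2/3}(\partial\mathcal{O};1,\mathcal{L}(\mathcal{B}_{\lambda,r}^\#,\mathcal{B}_{\lambda,r}^\#))$, and similarly for $\tilde{R}_2^R$ on the $\mathcal{H}^\#$ side. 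Then I set
\begin{equation*}
\mathcal{E}_2^L(z)=\chi_2\left(\begin{array}{cc}\chi(x_n/h^\epsilon) & 0 \\ 0 & \Id\end{array}\right)\tilde{\mathcal{E}}_2(z),\qquad
\mathcal{E}_2^R(z)=\tilde{\mathcal{E}}_2(z)\,\chi_2\left(\begin{array}{ccc}\chi(x_n/h^\epsilon) & 0 & 0 \\ 0 & \Id & 0 \\ 0 & 0 & \Id\end{array}\right),
\end{equation*}
so that the remainders of the lemma are $\mathcal{R}_2^L(z)=\chi_2\diag(\chi(x_n/h^\epsilon),\Id)\,\tilde{R}_2^L(z)$ and $\mathcal{R}_2^R(z)=\tilde{R}_2^R(z)\,\chi_2\diag(\chi(x_n/h^\epsilon),\Id,\Id)$.

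It then remains to verify two points. First, $\chi(x_n/h^\epsilon)$ is bounded on $\mathcal{B}_{\lambda,r}^\#$: in the $t$-coordinate it is multiplication by $\chi(h^{2/3-\epsilon}t)$, whose derivatives are $O_j((h^{2/3-\epsilon})^j)$, and since the $\mathcal{B}_{\lambda,r}^\#$-norm only controls $\|D_t^2 u\|_{L^2_r}$ and $\langle\lambda\rangle\|u\|_{L^2_r}$ (there is no weight $\|tu\|$ as in $\mathcal{B}_{z,\lambda,r}$), a two-fold commutator estimate using $\epsilon<2/3$ gives uniform boundedness, and the same holds after conjugation by the exponential weight. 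Second, $\mathcal{R}_2^L(z)\in\Psi_{\Sigma,2/3}(\partial\mathcal{O};h^N\langle\lambda\rangle^{-N},\mathcal{L}(\mathcal{B}_{\lambda,r}^\#,\mathcal{B}_{\lambda,r}^\#))$ for every $N$: the factor $\chi(x_n/h^\epsilon)$ localizes to $x_n\leqslant 2h^\epsilon$, so $\chi(x_n/h^\epsilon)x_n^k B_k^\#=O(h^{\epsilon k})$, while away from $\Sigma$ one has $\langle\lambda\rangle\gtrsim h^{-2/3}$, and the $\langle\xi'\rangle^{-\infty}$-decay already built into the away-from-glancing symbol class of $\mathfrak{E}^\#$ translates into $\langle\lambda\rangle^{-\infty}$-decay there (since $\langle\lambda\rangle\sim h^{-2/3}\langle\xi'\rangle^2$ for $|\xi'|$ large); hence, choosing $k$ large, both $h^k A_k^\#$ and $\chi(x_n/h^\epsilon)x_n^k B_k^\#$ are absorbed into $O(h^N\langle\lambda\rangle^{-N})$. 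The argument for $\mathcal{R}_2^R(z)$ is identical on the $\mathcal{H}_{\lambda,r}^\#$ side. As in the previous lemma there is no essential obstacle: all of the substantive content is supplied by Proposition~\ref{prop:easy} and by the construction of the formal parametrix $\mathfrak{E}^\#$ away from the glancing hypersurface, and the only mildly delicate step is the careful bookkeeping of the powers of $h$, $x_n$ and $\langle\lambda\rangle$ in the remainder so as to land precisely in the claimed operator-valued symbol class.
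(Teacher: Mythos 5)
Your proposal is correct and takes essentially the same approach as the paper, which simply says to repeat the argument of the preceding lemma with the standard semiclassical calculus and the observation $\langle\lambda\rangle=O(h^{-2/3}\langle\xi'\rangle^2)$. You have just spelled out that repetition in detail, correctly substituting $\mathcal{P}_0^\#,\mathcal{E}_0^\#,\mathfrak{P}^\#,\mathfrak{E}^\#$ for their glancing-region counterparts and noting that the $\mathcal{B}_{\lambda,r}^\#$-norm lacks the $\|tu\|$ term, which only makes the boundedness of $\chi(x_n/h^\epsilon)$ easier.
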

\begin{proof}
We can repeat the same argument with the standard semiclassical calculus and notice that $\langle\lambda\rangle=O(h^{-2/3}\langle\xi'\rangle^2)$ to get the properties of the remainder.
\end{proof}

Now combining the two lemmas above, we get the approximated inverse near the boundary. More precisely,
\begin{prop}
There exists $\mathcal{E}^L(z),\mathcal{E}^R(z)
:\mathcal{H}_r\to\mathcal{B}_{r,\epsilon}$ such that
\begin{equation*}
\mathcal{E}^L(z)\mathcal{P}(z)=
\left(
  \begin{array}{cc}
    \chi(x_n/h^\epsilon) & 0 \\
    0 & \Id \\
  \end{array}
\right)+\mathcal{R}^L(z),
\end{equation*}
\begin{equation*}
\mathcal{P}(z)\mathcal{E}^R(z)=
\left(
  \begin{array}{ccc}
    \chi(x_n/h^\epsilon) & 0 & 0 \\
    0 & \Id & 0 \\
    0 & 0 & \Id \\
  \end{array}
\right)+\mathcal{R}^R(z),
\end{equation*}
where the remainder terms satisfy
\begin{equation*}
\begin{split}
\langle h^2\Delta_{\partial\mathcal{O}}\rangle^N\mathcal{R}_3^L(z)
\langle h^2\Delta_{\partial\mathcal{O}}\rangle^N=O(h^N)
:\mathcal{B}_{r,\epsilon}\to\mathcal{B}_{r,\epsilon}\\
\langle h^2\Delta_{\partial\mathcal{O}}\rangle^N\mathcal{R}_3^R(z)
\langle h^2\Delta_{\partial\mathcal{O}}\rangle^N=O(h^N)
:\mathcal{H}_r\to\mathcal{H}_r,\\
\end{split}
\end{equation*}
for any $N$. Here $\langle h^2\Delta_{\partial\mathcal{O}}\rangle^N$
applies to all the components and the spaces $\mathcal{B}_{r,\epsilon}$ are defined as $\mathcal{B}_{r,\delta}$ further truncated to the $h^\epsilon$-neighborhood of the boundary by $\chi(x_n/h^{\epsilon})$.. Moreover, the $-+$-components for the approximate inverses
satisfy
\begin{equation*}
E_{-+}^L(z)\equiv E_{-+}^R(z)\in
\Psi_{\Sigma,2/3}^{0,1,2}(\partial\mathcal{O};
\mathcal{L}(\mathbb{C}^N,\mathbb{C}^N)).
\end{equation*}
\end{prop}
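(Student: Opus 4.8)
The plan is to obtain $\mathcal{E}^{L}(z)$ and $\mathcal{E}^{R}(z)$ by gluing the two parametrices of the preceding two lemmas with a microlocal partition of unity on $T^{\ast}\partial\mathcal{O}$. Choose $\chi_{1},\chi_{2}\in\Psi^{0,0}(\partial\mathcal{O})$ with $\chi_{1}+\chi_{2}=\Id$, with $\WF_{h}(\chi_{1})$ contained in a neighbourhood of the glancing hypersurface $\Sigma_{w}$ to which the first lemma applies and $\WF_{h}(\chi_{2})$ in the complementary region covered by the second lemma (enlarging the constant $C$ if necessary so that the two regions overlap, which is what makes a smooth partition possible). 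Put $\mathcal{E}^{L}(z)=\mathcal{E}_{1}^{L}(z)+\mathcal{E}_{2}^{L}(z)$ and $\mathcal{E}^{R}(z)=\mathcal{E}_{1}^{R}(z)+\mathcal{E}_{2}^{R}(z)$. Adding the two factorization identities of the lemmas and using $\chi_{1}+\chi_{2}=\Id$ — note that $\chi_{j}$, which acts in the $x'$ variables, commutes exactly with the block-diagonal truncation — one gets immediately
\begin{equation*}
\mathcal{E}^{L}(z)\mathcal{P}(z)=\diag(\chi(x_{n}/h^{\epsilon}),\Id)+\mathcal{R}^{L}(z),\qquad
\mathcal{P}(z)\mathcal{E}^{R}(z)=\diag(\chi(x_{n}/h^{\epsilon}),\Id,\Id)+\mathcal{R}^{R}(z),
\end{equation*}
with $\mathcal{R}^{L/R}(z)=\mathcal{R}_{1}^{L/R}(z)+\mathcal{R}_{2}^{L/R}(z)$; no commutator terms appear, since the cut-offs multiply the truncation rather than $\mathcal{P}(z)$.

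That $\mathcal{E}^{L}(z)$ and $\mathcal{E}^{R}(z)$ genuinely map $\mathcal{H}_{r}$ into $\mathcal{B}_{r,\epsilon}$ is where the matching of norms is used: by construction the global norms of $\mathcal{B}_{r,\delta}$ and $\mathcal{H}_{r}$ reduce, microlocally, to those of $\mathcal{B}_{\lambda,r},\mathcal{H}_{\lambda,r}$ near $\Sigma_{w}$ and to those of $\mathcal{B}_{\lambda,r}^{\#},\mathcal{H}_{\lambda,r}^{\#}$ away from $\Sigma_{w}$, and on the intermediate shell $C^{-1}\le|R(x',\xi')-w|\le 2C^{-1}$ the two local descriptions coincide (\ref{sec:inter}). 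Since $\WF_{h}(\mathcal{E}_{1}^{L/R})$ sits near $\Sigma_{w}$ while $\WF_{h}(\mathcal{E}_{2}^{L/R})$ stays away from it, each summand — hence the sum — is bounded $\mathcal{H}_{r}\to\mathcal{B}_{r,\epsilon}$. For the remainder, recall that the two lemmas produce $\mathcal{R}_{j}^{L/R}(z)\in\Psi_{\Sigma,2/3}(\partial\mathcal{O};h^{M}\langle\lambda\rangle^{-M},\,\cdot\,)$ for \emph{every} $M$, so it is enough to beat the loss from conjugating by $\langle h^{2}\Delta_{\partial\mathcal{O}}\rangle^{N}$. On $\WF_{h}(\chi_{1})$ one has $\langle\lambda\rangle\le Ch^{-2/3}$ and $\langle h^{2}\Delta_{\partial\mathcal{O}}\rangle=O(1)$, so conjugation is harmless and $h^{M}\langle\lambda\rangle^{-M}=O(h^{M})$; on $\WF_{h}(\chi_{2})$ one has $\langle\lambda\rangle\sim h^{-2/3}\langle\xi'\rangle^{2}$, the conjugation costs at most $\langle\xi'\rangle^{4N}$ on the two sides together, and $h^{M}\langle\lambda\rangle^{-M}\langle\xi'\rangle^{4N}\sim h^{5M/3}\langle\xi'\rangle^{4N-2M}=O(h^{N})$ once $M\ge 2N$. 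This gives $\langle h^{2}\Delta_{\partial\mathcal{O}}\rangle^{N}\mathcal{R}^{L/R}(z)\langle h^{2}\Delta_{\partial\mathcal{O}}\rangle^{N}=O(h^{N})$ for every $N$.

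For the $-+$ block, observe that each $\mathcal{E}_{j}^{L}$ and $\mathcal{E}_{j}^{R}$ come from one and the same two-sided formal inverse $\mathfrak{E}$ of $\mathfrak{P}$ constructed in \ref{sec:second}, differing only by the side on which the cut-off $\chi_{j}\diag(\chi(x_{n}/h^{\epsilon}),\Id)$ is placed; hence $E_{-+}^{L}(z)-E_{-+}^{R}(z)$ is a sum of commutators $[\chi_{j},(\,\cdot\,)_{-+}]$ and is therefore of lower order, i.e. $E_{-+}^{L}(z)\equiv E_{-+}^{R}(z)$. The full symbol of $E_{-+}^{L/R}(z)$ is read off from $\mathfrak{E}$, its principal part being the model effective Hamiltonian $\diag(z-\lambda-e^{-2\pi i/3}\mu^{2/3}\zeta_{j}')$ of \eqref{model:airye-+} with $\lambda=h^{-2/3}(R(x',\xi')-w)$ and $\mu=2Q(x',\xi')$; since this symbol is of order $\langle\lambda\rangle$ near $\Sigma_{w}$ and of order $\langle\xi'\rangle^{2}$ away from it, and all lower-order corrections in the expansion carry extra decay by the scheme of \ref{sec:second}, we conclude $E_{-+}^{L}(z)\equiv E_{-+}^{R}(z)\in\Psi_{\Sigma,2/3}^{0,1,2}(\partial\mathcal{O};\mathcal{L}(\mathbb{C}^{N},\mathbb{C}^{N}))$.

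Setting aside the routine algebra of the gluing, the two genuinely delicate points are: (i) checking that the glued operator lands in the global space $\mathcal{B}_{r,\epsilon}$, which rests entirely on the coincidence of the three families of norms on the overlap region established in \ref{sec:inter}; and (ii) propagating the remainder estimate through the conjugation by $\langle h^{2}\Delta_{\partial\mathcal{O}}\rangle^{N}$ away from $\Sigma_{w}$, which is precisely why the earlier lemmas were arranged to give remainders of class $h^{M}\langle\lambda\rangle^{-M}$ for all $M$ and uses the relation $\langle\lambda\rangle\sim h^{-2/3}\langle\xi'\rangle^{2}$. Pinning down the exact order $(0,1,2)$ of $E_{-+}$ and the equivalence $E_{-+}^{L}\equiv E_{-+}^{R}$ is then bookkeeping inside the second-microlocal calculus of \ref{sec:second}.
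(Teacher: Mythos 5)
Your construction is the same as the paper's: glue the two local parametrices $\mathcal{E}_1^{L/R}$ and $\mathcal{E}_2^{L/R}$ with a microlocal partition of unity $\chi_1+\chi_2=\Id$, where $\chi_1$ lives near $\Sigma_w$ and $\chi_2$ away from it, and set $\mathcal{E}^\cdot(z)=\mathcal{E}_1^\cdot(z)+\mathcal{E}_2^\cdot(z)$. You also flesh out the details of the boundedness $\mathcal{H}_r\to\mathcal{B}_{r,\epsilon}$ (via the matching of local norms on the intermediate shell) and of the conjugated remainder estimate (via $\langle\lambda\rangle\sim h^{-2/3}\langle\xi'\rangle^2$ and taking $M\ge2N$), both of which the paper leaves implicit; these are correct.

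Where you genuinely depart from the paper is the argument that $E_{-+}^L\equiv E_{-+}^R$. The paper exploits the fact, established in \S\ref{sec:inter}, that $\tilde E_{-+1}\equiv\tilde E_{-+2}$ on the intermediate shell $C^{-1}\le|R-w|\le 2C^{-1}$; since $\chi_1\equiv\Id,\chi_2\equiv0$ near $\Sigma_w$, $\chi_1\equiv0,\chi_2\equiv\Id$ away from it, and the two parametrices agree in between, both $E_{-+}^L$ and $E_{-+}^R$ reduce to the same thing in each region. You instead note that the difference $E_{-+}^L-E_{-+}^R=[\chi_1,\tilde E_{-+1}]+[\chi_2,\tilde E_{-+2}]$ is a sum of commutators of a standard $\Psi^{0,0}$ cut-off with a $\Psi_{\Sigma,2/3}^{0,1,2}$ symbol, hence lands in the subprincipal ideal. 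Both arguments are valid, but they buy slightly different things. The commutator argument is self-contained and does not use the coincidence on the overlap, so it is arguably cleaner for showing $E_{-+}^L\equiv E_{-+}^R$ in the sense of the principal-symbol map. The paper's route is more informative: the coincidence $\tilde E_{-+1}\equiv\tilde E_{-+2}$ on the overlap is exactly what ensures that the \emph{principal symbol} of the glued operator is independent of the choice of partition of unity — without it, $\chi_1\tilde E_{-+1}+\chi_2\tilde E_{-+2}$ would still be an element of $\Psi_{\Sigma,2/3}^{0,1,2}$, but its leading symbol would be ambiguous, and the explicit description of $E_{w,-+}^0$ in Theorem \ref{thm:e+-}(iii)--(iv), which the whole argument ultimately rests on, would not follow. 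So the coincidence lemma is doing real work, and the commutator argument alone does not replace it; you would still want to invoke \S\ref{sec:inter} when you later read off the leading symbol.

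One small inaccuracy: you write that $\mathcal{E}_1^{L/R}$ and $\mathcal{E}_2^{L/R}$ ``come from one and the same two-sided formal inverse $\mathfrak{E}$''. That is not quite right — the parametrix near $\Sigma_w$ comes from $\mathfrak{E}$ built out of the Airy model, while the one away from $\Sigma_w$ comes from the separate formal inverse $\mathfrak{E}^\#$ built out of the elliptic model $P^\#$. What is true (and what your commutator computation actually uses) is that for each fixed $j$, $\mathcal{E}_j^L$ and $\mathcal{E}_j^R$ are obtained from the same $\tilde{\mathcal{E}}_j$ by placing the cut-off on the left vs.\ the right. The conclusion is unaffected.
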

\begin{proof}
We can simply choose $\chi_1$ and $\chi_2$ such that $\chi_1+\chi_2=1$ and set $\mathcal{E}^\cdot(z)=\mathcal{E}^\cdot_1(z)+\mathcal{E}^\cdot_2(z)$,
$\cdot=L,R$. To prove the last statement, we notice that from the construction,
\begin{equation*}
E_{-+}^L=\chi_1\tilde{E}_{-+1}+\chi_2\tilde{E}_{-+2},\;\;\;
E_{-+}^R=\tilde{E}_{-+1}\chi_1+\tilde{E}_{-+2}\chi_2.
\end{equation*}
Near the glancing hypersurface, $\{m:d(m,\Sigma)\leqslant\frac{1}{2}C\}$, $\chi_1\equiv\Id$ while $\chi_2\equiv0$. Away from the glancing hypersurface
$\{m:d(m,\Sigma)\geqslant2C\}$, $\chi_1\equiv0$ while $\chi_2\equiv\Id$. In the intermediate region, $E_{-+1}\equiv E_{-+2}$ from our discussion in section \ref{sec:inter}. Therefore $E_{-+}^L$ and $E_{-+}^R$ are essentially the same in the $\Psi_{\Sigma,2/3}^{0,1,2}(\partial\mathcal{O};
\mathcal{L}(\mathbb{C}^N,\mathbb{C}^N))$.
\end{proof}

Finally, we can combine this with the estimate away from the boundary to get the inverse.
\begin{prop}
Let $0<\epsilon<2/3$, $0<h<h_0(\delta)$, there exists $\mathcal{E}_w(z):\mathcal{H}_{w,0}\to\mathcal{B}_{w,0,\epsilon}$ such that
\begin{equation*}
\mathcal{P}_w(z)\mathcal{E}_w(z)=\Id,\;\;\;
\mathcal{E}_w(z)\mathcal{P}_w(z)=\Id
\end{equation*}
and $E_{w,-+}\in\Psi_{\Sigma_w,2/3}^{0,1,2}(\partial\mathcal{O};
\mathcal{L}(\mathbb{C}^N,\mathbb{C}^N))$.
\end{prop}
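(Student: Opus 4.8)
The plan is to build $\mathcal{E}_w(z)$ by gluing the near-boundary parametrix $\mathcal{E}^L(z),\mathcal{E}^R(z)$ of the previous proposition to the away-from-boundary parametrix $E_\epsilon,K_\epsilon$ of the proposition recalled at the beginning of this section, and then to remove the resulting error by a Neumann series. Keeping $w=1$, I would fix cut-offs $\psi_0,\psi_1\in C^\infty([0,\infty);[0,1])$ with $\psi_0+\psi_1\equiv1$, $\supp\psi_1\subset\{x_n>h^\epsilon\}$ (so that $E_\epsilon,K_\epsilon$ act on $\psi_1\,\cdot$) and $\supp\psi_0\subset\{x_n<2h^\epsilon\}$; the previous proposition may be applied with a slightly smaller exponent so that its $L^2$-cut-off $\chi(x_n/h^{\epsilon'})$ is identically $1$ on $\supp\psi_0$. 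Let $\Psi_0$ multiply the $L^2$-component by $\psi_0$ and act as the identity on the two boundary components, and set $\mathcal{E}_\infty(v,v_0,v_+)=(E_\epsilon(\psi_1v),0)$. I would take
\begin{equation*}
\mathcal{E}^R_{\mathrm{app}}(z)=\mathcal{E}^R(z)\,\Psi_0+\mathcal{E}_\infty,
\end{equation*}
and, symmetrically, $\mathcal{E}^L_{\mathrm{app}}(z)$ from $\mathcal{E}^L(z)$ and the left-hand version of the quoted proposition (same construction, or by formal adjoints).

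Next I would compute $\mathcal{P}_w(z)\mathcal{E}^R_{\mathrm{app}}(z)-\Id$. Using $\mathcal{P}(z)\mathcal{E}^R(z)=\diag(\chi(x_n/h^{\epsilon'}),\Id,\Id)+\mathcal{R}^R(z)$, the fact that the $L^2$-cut-off is $\equiv1$ on $\supp\psi_0$, the identity $(P-z)E_\epsilon=I+K_\epsilon$ and $\psi_0+\psi_1=1$, the $L^2$-row collapses to the identity modulo $K_\epsilon(\psi_1\,\cdot)$ and $\mathcal{R}^R(z)\Psi_0$. The boundary rows are simpler: $\gamma_1E_\epsilon(\psi_1v)=0$, since $E_\epsilon(\psi_1v)$ is supported in $D((1-\gamma)h^\epsilon)$, away from $\partial\mathcal{O}$; and $R_{+,w}E_\epsilon(\psi_1v)$ is super-exponentially small, because the amplitude of $R_{+,w}$ carries the Airy factors $e^{\lambda,\delta}_{j,\mu}(h^{-2/3}x_n)$ which, on $\supp\psi_1\subset\{x_n>h^\epsilon\}$, are of size $\exp(-c\,h^{3\epsilon/2-1})$ since $\epsilon<2/3$. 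Thus $\mathcal{P}_w(z)\mathcal{E}^R_{\mathrm{app}}(z)=\Id+\mathcal{R}(z)$, where $\mathcal{R}(z)$ consists of the super-exponentially small terms $K_\epsilon(\psi_1\,\cdot)$ and $R_{+,w}E_\epsilon(\psi_1\,\cdot)$ together with $\mathcal{R}^R(z)\Psi_0$; the last is $O(h^\infty)$ once one uses $\langle\lambda\rangle=\langle h^{-2/3}(R-w)\rangle=O(h^{-2/3})$ near the boundary and the boundedness of $\langle h^2\Delta_{\partial\mathcal{O}}\rangle^{-N}$ on the relevant spaces to turn the bound $\langle h^2\Delta_{\partial\mathcal{O}}\rangle^N\mathcal{R}^R(z)\langle h^2\Delta_{\partial\mathcal{O}}\rangle^N=O(h^N)$ into $\mathcal{R}^R(z)\Psi_0=O(h^{N/3})$ in $\mathcal{L}(\mathcal{H}_{w,0})$ for every $N$.

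It then follows that for $0<h<h_0(\delta)$ the operator $\Id+\mathcal{R}(z)$ is invertible on $\mathcal{H}_{w,0}$, so $\mathcal{E}_w(z):=\mathcal{E}^R_{\mathrm{app}}(z)(\Id+\mathcal{R}(z))^{-1}:\mathcal{H}_{w,0}\to\mathcal{B}_{w,0,\epsilon}$ is a right inverse of $\mathcal{P}_w(z)$; running the same argument on the left yields a left inverse, and left and right inverses of a bounded bijection coincide, so $\mathcal{E}_w(z)$ is the two-sided inverse. For the last assertion I would expand the $(-+)$-block of $\mathcal{E}^R_{\mathrm{app}}(z)(\Id+\mathcal{R}(z))^{-1}$: since $\mathcal{E}_\infty$ contributes nothing to it, $E_{w,-+}(z)=E^R_{-+}(z)+\mathcal{S}(z)$, where $\mathcal{S}(z)$ is a composition of the blocks of $\mathcal{E}^R(z)$ — which belong to the operator-valued second-microlocal classes of Section~\ref{sec:second} — with $(\Id+\mathcal{R}(z))^{-1}-\Id=O(h^\infty)$. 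By the composition calculus of those classes $\mathcal{S}(z)$ lies in the residual class $\Psi^{-\infty,-\infty,-\infty}_{\Sigma_w,2/3}(\partial\mathcal{O};\mathcal{L}(\mathbb{C}^N,\mathbb{C}^N))\subset\Psi^{0,1,2}_{\Sigma_w,2/3}$, while $E^R_{-+}(z)\in\Psi^{0,1,2}_{\Sigma_w,2/3}$ by the previous proposition; hence $E_{w,-+}(z)\in\Psi^{0,1,2}_{\Sigma_w,2/3}(\partial\mathcal{O};\mathcal{L}(\mathbb{C}^N,\mathbb{C}^N))$.

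The step I expect to be the main obstacle is the second one: one must merge three remainder estimates of quite different flavours — one stated through conjugation by $\langle h^2\Delta_{\partial\mathcal{O}}\rangle^N$, one coming from Airy decay, one from the $K_\epsilon$-bound — into a single honest $O(h^\infty)$ estimate in the anisotropically weighted norm of $\mathcal{H}_{w,0}$, whose boundary components carry the $h$-dependent weights $h^{1/3}\langle h^{-2/3}(-h^2\Delta_{\partial\mathcal{O}}-w)\rangle^{1/4}$ and $h^{1/3}\langle h^{-2/3}(-h^2\Delta_{\partial\mathcal{O}}-w)\rangle$; and then to verify that this negligible remainder, once inverted by the Neumann series, does not move $E_{w,-+}(z)$ out of $\Psi^{0,1,2}_{\Sigma_w,2/3}$. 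A secondary and more mechanical point is the compatible choice of cut-off scales so that the near-boundary $L^2$-truncation is identically $1$ throughout $\supp\psi_0$.
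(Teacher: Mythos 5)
Your plan is essentially the paper's plan: glue the near-boundary parametrix $\mathcal{E}^R(z)$ (with a fine cut-off supported where the near-boundary truncation is identically $1$) to the exterior parametrix $(E_\epsilon,K_\epsilon)$, verify that the $\gamma$-row and $R_{+,w}$-row errors vanish or are super-exponentially small, conclude that $\mathcal{P}_w(z)\tilde{\mathcal{E}}^R(z)=\Id+\mathcal{K}^R$ with $\mathcal{K}^R=O(h^\infty)$, and invert by a Neumann series. That much matches.

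The place where you flag "the main obstacle" is indeed where the paper uses a structural observation you have left out. The remainder
\begin{equation*}
\mathcal{K}^R(z)=\mathcal{R}^R(z)\begin{pmatrix}\tilde{\chi}&0&0\\0&\Id&0\\0&0&\Id\end{pmatrix}
+\begin{pmatrix}K_\epsilon(1-\tilde{\chi})&0&0\\ \gamma E_\epsilon(1-\tilde{\chi})&0&0\\ R_{+}E_\epsilon(1-\tilde{\chi})&0&0\end{pmatrix}
\end{equation*}
is \emph{block lower triangular}, so $\mathcal{A}=(\Id+\mathcal{K}^R)^{-1}-\Id$ is as well, and the only piece of $\mathcal{A}$ that touches the $(-+)$-block is $A_{33}$, which is a Neumann series built purely from $\mathcal{R}^R_{33}\in\Psi^{-\infty,-\infty}$; this satisfies the conjugation bounds $\langle h^2\Delta_{\partial\mathcal{O}}\rangle^N(\cdot)\langle h^2\Delta_{\partial\mathcal{O}}\rangle^N=O(h^N)$ stably under the Neumann series, so $E_{w,-+}=E^R_{-+}+E^R_{-+}A_{33}$ stays in $\Psi^{0,1,2}_{\Sigma_w,2/3}$. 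You instead assert that $\mathcal{S}(z)=$ ($\mathcal{E}^R$-blocks composed with $(\Id+\mathcal{R})^{-1}-\Id$) lies in $\Psi^{-\infty,-\infty,-\infty}_{\Sigma_w,2/3}$ "by the composition calculus," but $(\Id+\mathcal{R})^{-1}-\Id$ is at that point only known to be $O(h^\infty)$ in operator norm on the anisotropic space $\mathcal{H}_{w,0}$; it is not immediate that it, or its composition with the $E_-$-row, is a second-microlocal pseudodifferential operator. The lower-triangular observation is exactly what makes the symbol-class check elementary, because it reduces everything to the single block $\mathcal{R}^R_{33}$ for which the $\langle h^2\Delta_{\partial\mathcal{O}}\rangle^N$-conjugation estimate is already in hand. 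I would add that observation and use it in place of the final "by the composition calculus" sentence; the rest of your construction (the choice of nested scales $\epsilon'<\epsilon$, the vanishing of $\gamma_1E_\epsilon\psi_1$, the Airy decay estimate for $R_{+,w}E_\epsilon\psi_1$, and the two-sided inverse argument) is fine and matches the paper's proof up to cosmetic choices of cut-offs.
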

\begin{proof}
Let us begin with an approximate right inverse
\begin{equation*}
\mathcal{\mathcal{E}}^R(z)=\mathcal{E}^R(z)
\left(
  \begin{array}{ccc}
    \tilde{\chi}(x_n/h^\epsilon) & 0 & 0\\
    0 & \Id & 0\\
    0 & 0 & \Id \\
  \end{array}
\right)
+\left(
  \begin{array}{ccc}
    E_\epsilon(1-\tilde{\chi}(x_n/h^\epsilon)) & 0 & 0\\
    0 & 0 & 0 \\
  \end{array}
\right).
\end{equation*}
Here $\tilde{\chi}\in C^\infty([0,\infty))$ supported in $\{\chi=1\}$. Then we can compute
\begin{equation*}
\mathcal{P}(z)\tilde{\mathcal{E}}^R(z)=\Id+\mathcal{K}^R(z)
\end{equation*}
where the remainder is given by
\begin{equation*}
\mathcal{K}^R(z)=\mathcal{R}^R(z)
\left(
  \begin{array}{ccc}
    \tilde{\chi}(x_n/h^\epsilon) & 0 & 0 \\
    0 & \Id & 0 \\
    0 & 0 & \Id \\
  \end{array}
\right)+
\left(
  \begin{array}{ccc}
    K_\epsilon(1-\tilde{\chi}(x_n/h^\epsilon)) & 0 & 0 \\
    \gamma E_\epsilon(1-\tilde{\chi}(x_n/h^\epsilon)) & 0 & 0 \\
    R_+E_\epsilon(1-\tilde{\chi}(x_n/h^\epsilon)) & 0 & 0 \\
  \end{array}
\right).
\end{equation*}
Since $E_\epsilon(1-\tilde{\chi})u$ is supported away from the boundary, we have $\gamma E_\epsilon(1-\tilde{\chi}(x_n/h^\epsilon))=0$. Moreover,
for any smooth $u$, since $(1-\tilde{\chi}(x_n/h^{\epsilon}))u$ is supported in $D(h^\epsilon)$, $E_\epsilon(1-\tilde{\chi}(x_n/h^{\epsilon}))u$ is supported in $(D(1-\gamma)h^{\epsilon})$, so by the super-exponential decay of $e_{j,\mu}^{\lambda,\delta}$, we have
\begin{equation}
\tilde{e}_w^\delta(j)u(p,x_n)=\int_0^\infty h^{-1/3}\chi(x_n)e_{j,\mu}^{\lambda,\delta}(h^{-2/3}x_n)u(p,x_n)dx_n
=O(h^\infty)
\end{equation}
which gives $R_+E_\epsilon(1-\tilde{\chi}(x_n/h^\epsilon))=O(h^\infty)$.
Therefore we get $\mathcal{K}^R=O(h^\infty):\mathcal{H}_0\to\mathcal{H}_0$ and hence for $h$ small enough, $(\Id+\mathcal{K}^R)^{-1}=\Id+\mathcal{A}$ where $\mathcal{A}=O(h^\infty):\mathcal{H}_0\to\mathcal{H}_0$. We can now put
\begin{equation*}
\mathcal{E}(z)=\mathcal{E}^R(z)(\Id+\mathcal{A}(z))
\end{equation*}
Suppose
\begin{equation*}
\mathcal{A}(z)=
\left(
  \begin{array}{ccc}
    A_{11}(z) & A_{12}(z) & A_{13}(z) \\
    A_{21}(z) & A_{22}(z) & A_{23}(z) \\
    A_{31}(z) & A_{32}(z) & A_{33}(z) \\
  \end{array}
\right)
\end{equation*}
then from the formula of $\mathcal{K}^R$, we see it is lower triangular and thus the same is true for $\mathcal{A}$. Therefore
\begin{equation*}
E_{-+}(z)=E_{-+}^R(z)+E_{-+}^R(z)A_{33}(z)
\end{equation*}
Here $A_{33}(z)\in\Psi^{-\infty,-\infty}(\partial\mathcal{O};
\mathcal{L}(\mathbb{C}^N,\mathbb{C}^N))$ since it comes entirely from $\mathcal{R}_3^R$. Therefore $E_{-+}(z)\in\Psi_{\Sigma_w,2/3}^{0,1,2}(\partial\mathcal{O};
\mathcal{L}(\mathbb{C}^N,\mathbb{C}^N))$ is essentially the same as $E_{-+}^R$ (and also as $E_{-+}^L$).
\end{proof}

\subsection{Reduction to $E_{-+}$}
Now we state the main result of this section.
\begin{thm}
\label{thm:e+-}
Assume that $W$ is a fixed compact subset of $(0,\infty)$ and $\epsilon\ll1$. For every $w\in W$ and $z\in\mathbb{C}$ such that $|\Re z|\ll1/\delta$, $|\Im z|\leqslant C_1$, there exists
\begin{equation}
E_{w,-+}(z)\in\Psi_{\Sigma_w,2/3}^{0,1,2}
\end{equation}
where
$\Sigma_w=\{p\in T^\ast\partial\mathcal{O}:R(p)=w\}$, $N=N(C_1)$ such that for $0<h<h_0$ and some large $C>0$:

(i) The multiplicity of resonances are given by
\begin{equation}
\label{eq:mult}
m_\mathcal{O}(h^{-2}(w+h^{2/3}z))=\frac{1}{2\pi i}
\tr\oint_{|\tilde{z}-z|=\epsilon}E_{w,-+}(\tilde{z})^{-1}
\frac{d}{d\tilde{z}}E_{w,-+}(\tilde{z})d\tilde{z}
\end{equation}

(ii) If $E_{w,-+}^0(z;p,h)=\sigma_{\Sigma,h}(E_{w,-+}(z))(p;h)$, $p\in T^\ast\partial\mathcal{O}$, then
\begin{equation}
E_{w,-+}^0(z,p,h)=O(\langle\lambda-\Re z\rangle):\mathbb{C}^N\to\mathbb{C}^N,
\end{equation}
where $\lambda=h^{-2/3}(R(p)-w)$.

(iii) For $|\lambda|\leqslant1/C\sqrt{\delta}$,
\begin{equation}
\|E_{w,-+}^0(z;p,h)-\diag(z-\lambda-e^{-2\pi i/3}
\zeta_j'(p))\|_{\mathcal{L}(\mathbb{C}^N,\mathbb{C}^N)}
\leqslant\epsilon.
\end{equation}
Moreover, $\det E_{w,-+}^0(z;p,h)=0$ if and only if
\begin{equation}
z=\lambda+e^{-2\pi i/3}\zeta_j'(p)
\end{equation}
for some $1\leqslant j\leqslant N$ and all zeroes are simple. Here $\zeta_j'(p)=\zeta_j'(2Q(p))^{2/3}$.

(iv) For $|\lambda|\geqslant1/C\sqrt{\delta}$, $E_{w,-+}^0$ is invertible and
\begin{equation}
E_{w,-+}^0(z,p,h)^{-1}=O(\langle\lambda-\Re z\rangle^{-1}):\mathbb{C}^N\to\mathbb{C}^N,
\end{equation}
\end{thm}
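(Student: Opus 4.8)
Throughout, $E_{w,-+}(z)$ denotes the lower‑right block of the inverse $\mathcal{E}_w(z)$ of the global Grushin problem $\mathcal{P}_w(z)$ built in the previous proposition; that $E_{w,-+}(z)\in\Psi_{\Sigma_w,2/3}^{0,1,2}(\partial\mathcal{O};\mathcal{L}(\mathbb{C}^N,\mathbb{C}^N))$ is already recorded there, so nothing further is needed for the existence and symbol–class statement. The two things that remain are (a) to convert the multiplicity formula \eqref{res:comb} of Section \ref{sec:prelim} into \eqref{eq:mult}, and (b) to read off properties (ii)--(iv) of the principal symbol $E_{w,-+}^0=\sigma_{\Sigma_w,h}(E_{w,-+})$ from the model computations of Section \ref{sec:model}.

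For (i), the plan is a standard Grushin/Gohberg--Sigal argument. Since $\mathcal{P}_w(z)$ is genuinely invertible on the whole region $|\Re z|\ll1/\delta$, $|\Im z|\leqslant C_1$, $0<h<h_0$, the inverse $\mathcal{E}_w(z)$ is holomorphic there, and the Schur–complement relation for Grushin problems gives, on the first component of the combined operator,
\[\binom{P-z}{\gamma}^{-1}=E_w(z)-E_{w,+}(z)E_{w,-+}(z)^{-1}E_{w,-}(z),\]
a meromorphic family whose poles are exactly the points where $E_{w,-+}(z)$ is not invertible. Inserting this into \eqref{res:comb} and using $\frac{d}{d\tilde z}\binom{P-\tilde z}{\gamma}=\binom{-1}{0}$, the holomorphic term $E_w$ integrates to zero along $|\tilde z-z|=\epsilon$, while differentiating $\mathcal{P}_w\mathcal{E}_w=\Id$ yields the identity $\partial_z E_{w,-+}=E_{w,-}E_{w,+}$; combining this with the cyclicity of the trace turns the contour integral into $\frac{1}{2\pi i}\tr\oint E_{w,-+}(\tilde z)^{-1}\frac{d}{d\tilde z}E_{w,-+}(\tilde z)\,d\tilde z$, which is \eqref{eq:mult}. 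The only points that are not purely formal are that $z\mapsto E_{w,-+}(z)$ is a holomorphic Fredholm family of index zero, invertible off a discrete set (which follows from the discreteness of the spectrum of the complex–scaled operator, cf.\ Section \ref{sec:prelim} and \cite{SZ4}), and that the logarithmic residue $\oint E_{w,-+}^{-1}dE_{w,-+}$ is of trace class because it is finite rank; for the general Grushin formalism here one may refer to \cite{SZ7} and the appendix of \cite{HS}.

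For (ii)--(iv), the plan is simply to transcribe the model results. By the construction of $\mathcal{E}_w(z)$ in Sections \ref{sec:model}--\ref{sec:global}, the principal term of the formal inverse is the model inverse $\mathcal{E}_0$, so $E_{w,-+}^0(z;p,h)$ equals, at $\lambda=h^{-2/3}(R(p)-w)$ and $\mu=2Q(p)$, the matrix $E_{-+}^\delta(z,\lambda)$ of Proposition \ref{prop:airy} (in the $\mu$‑dependent form of Section \ref{sec:model}, with $\zeta_j'$ replaced by $\zeta_j'(p)=\zeta_j'(2Q(p))^{2/3}$) near the glancing hypersurface, the matrix $E_{-+}^\#(z,\lambda)$ of Proposition \ref{prop:easy} away from it, and these agree in the intermediate region. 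Hence (ii) is the uniform bound $\|E_{-+}^\delta\|_{\mathcal{L}(\mathbb{C}^N,\mathbb{C}^N)}=O(\langle\lambda-\Re z\rangle)$ already contained in \eqref{es:grushin}. Statement (iii), together with the location \eqref{pro:e-+:zero} of the zeros of $\det E_{-+}^0$, is precisely \eqref{pro:e-+:perturb}--\eqref{pro:e-+:zero} of Proposition \ref{prop:airy:e-+}; the subprincipal contributions and the symbol–class ambiguity — which is smaller by a factor $\langle\lambda\rangle^{-1}$, hence $O(1)$ on $|\lambda|\leqslant1/(C\sqrt\delta)$ — are absorbed into $\epsilon$ by first taking $\delta$, then $h$, small. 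Simplicity of the zeros follows from the $\epsilon$‑closeness to $\diag(z-\lambda-e^{-2\pi i/3}\zeta_j'(p))$: near a zero $z_0=\lambda+e^{-2\pi i/3}\zeta_{j_0}'(p)$ the $j_0$‑th diagonal entry vanishes to first order in $z$ while the others stay bounded away from $0$ because the $\zeta_j'$ are distinct, so $\partial_z\det E_{-+}^0\neq0$ at $z_0$. Finally (iv) is \eqref{pro:e-+:inverse} of Proposition \ref{prop:airy:e-+} together with the bound $E_{-+}^\#(z,\lambda)^{-1}=O(\langle\lambda\rangle^{-1})$ of Proposition \ref{prop:easy}, which between them cover the entire regime $|\lambda|\geqslant1/(C\sqrt\delta)$.

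The main obstacle is part (i): the manipulations themselves are routine once the trace–class and holomorphy properties are in hand, but one must carefully chain together the two reductions — complex scaling (resonances of $-\Delta_{\mathbb{R}^n\setminus\mathcal{O}}$ to eigenvalues of $-\Delta_\Gamma$), the combined–operator formula \eqref{res:comb}, and the Grushin reduction to $E_{w,-+}$ — and check at each stage that the finite–rank logarithmic residues and the identity $m_\mathcal{O}=m$ are faithfully transported. Parts (ii)--(iv), by contrast, are essentially a direct transcription of the ODE model analysis of Section \ref{sec:model}, with the bookkeeping of $\delta$‑ and $h$‑dependence being the only thing to watch.
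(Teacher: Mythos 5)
Your proposal is correct and takes essentially the same route as the paper, which gives only a one‑line proof: it records the Schur complement formula
\[
\binom{h^{-2/3}(P(h)-w)-z}{\gamma}^{-1}
=(E_w(z),K_w(z))-E_{w,+}(z)E_{w,-+}(z)^{-1}(E_{w,-}(z),K_{w,-}(z))
\]
and asserts that (ii)--(iv) follow directly from the construction of $\mathcal{E}_w$. What you do is fill in exactly the details the paper elides: for (i), inserting the Schur formula into \eqref{res:comb}, using holomorphy of $E_w$, the Grushin identity $\partial_z E_{w,-+}=E_{w,-}E_{w,+}$, cyclicity of the trace, and the finite‑rank nature of the residues; for (ii)--(iv), reading the principal symbol off the model operators of Section~\ref{sec:model} and Propositions~\ref{prop:airy}, \ref{prop:airy:e-+}, \ref{prop:easy}, and noting the intermediate‑region matching of Section~\ref{sec:inter}. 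Two small quibbles worth keeping in mind: the Schur formula you display drops the $K_w$, $K_{w,-}$ block (harmless here since the trace computation only sees the first component after pairing with $\binom{-1}{0}$, but one should keep the full $1\times2$ row operator); and the size of the symbol‑class ambiguity in (iii) is controlled by the gain $h^{1/3}\langle\lambda\rangle^{-1}$ from $S^{0,1,2}_{\Sigma_w,2/3}$ to $S^{-1/3,0,1}_{\Sigma_w,2/3}$, i.e.\ it is $O(h^{1/3})$ and hence $o(1)$ as $h\to0$, not merely $O(\langle\lambda\rangle^{-1})$, which is the cleaner way to absorb it into $\epsilon$.
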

\begin{proof}
The statement (i) follows from the formula
\begin{equation*}
\left(
  \begin{array}{c}
    h^{-2/3}(P(h)-w)-z \\
    \gamma \\
  \end{array}
\right)^{-1}=(E_w(z),K_w(z))
-E_{w,+}(z)E_{w,-+}(z)^{-1}(E_{w,-}(z),K_{w,-}(z)).
\end{equation*}
The other statements follow directly from our construction of $\mathcal{E}_w$.
\end{proof}

\section{Proof of the theorem}
\label{sec:resfree}

\subsection{Resonance Bands}
We first prove Theorem \ref{thm:main1}. Under the pinched curvature condition, we have
\begin{equation*}
K\zeta_j'<\kappa\zeta_{j+1}',\;\;\; 1\leqslant j\leqslant j_0
\end{equation*}
which can be translated to
\begin{equation*}
\max_{p\in\Sigma}\zeta_j'(p)<\min_{p\in\Sigma}\zeta_{j+1}'(p),\;\;\; 1\leqslant j\leqslant j_0.
\end{equation*}

Suppose $\lambda$ is a resonance which satisfies that for some $1\leqslant j\leqslant j_0$,
\begin{equation*}
K\zeta_j'(\Re\lambda)^{1/3}+C\leqslant-\Im\lambda
\leqslant\kappa\zeta_{j+1}'(\Re\lambda)^{1/3}-C.
\end{equation*}

Let $\zeta=\lambda^2=h^{-2}(1+h^{2/3}z)$ and $h=(\Re\lambda)^{-1}$, then we have
\begin{equation*}
K\zeta_j'h^{1/3}+C\leqslant-\Im\lambda\leqslant\kappa\zeta_{j+1}'h^{1/3}-C
\end{equation*}
and
\begin{equation*}
\Re z=h^{-2/3}(h^2\Re\zeta-1)=O(h^{2/3}).
\end{equation*}
\begin{equation*}
-\Im z=h^{-2/3}(-h^2\Im\zeta)=-2h^{1/3}\Im\lambda\in
[2K\zeta_j'+Ch^{1/3},2\kappa\zeta_{j+1}'-Ch^{1/3}].
\end{equation*}
Therefore for $p\in\Sigma_1$, i.e. $R(p)=1$,
\begin{equation*}
\Im[z-\lambda-e^{-2\pi i/3}\zeta_k'(p)]
=\Im z+\zeta_k'(2Q(p))^{2/3}\cos(\pi/6)
\in[\Im z+2\kappa\zeta_k',\Im z+2K\zeta_k']
\end{equation*}
thus for at most one of $k\in\{j,j+1\}$,
\begin{equation*}
|\Im[z-\lambda-e^{-2\pi i/3}\zeta_k'(p)]|\geqslant Ch^{1/3}
\end{equation*}
while for all other $k\in\{1,\ldots,j_0\}$,
\begin{equation*}
|\Im[z-\lambda-e^{-2\pi i/3}\zeta_k'(p)]|\geqslant \frac{1}{O(1)}.
\end{equation*}

Therefore we can decompose
\begin{equation*}
E_{-+}(z):=E_{1,-+}(z)=A(z)G_{-+}(z)B(z)
\end{equation*}
where
\begin{equation*}
A(z),B(z)\in\Psi_{\Sigma_1,2/3}^{0,0,0}(\partial\mathcal{O};
\mathcal{L}(\mathbb{C}^N,\mathbb{C}^N))
\end{equation*}
are invertible and
\begin{equation*}
G_{-+}(z)\in\Psi^{0,1,2}_{\Sigma_1,2/3}(\partial\mathcal{O};
\mathcal{L}(\mathbb{C}^N,\mathbb{C}^N))
\end{equation*}
has principal symbol $G_{-+}^0(z)$, such that, near $\Sigma_1$,
\begin{equation*}
\Im G_{-+}^0(z)\geqslant C_0h^{1/3}\Id_{\mathbb{C}^N}
\end{equation*}
while away from $\Sigma_1$,
\begin{equation*}
\Im G_{-+}^0(z)\geqslant \frac{1}{O(1)}h^{-2/3}\langle\xi\rangle^2.
\end{equation*}

Now we choose $C_0$ large enough, then we see that the imaginary part of the total symbol of $G_{-+}(z)$ is bounded below by a positive symbol in $S_{\Sigma_1,2/3}^{-1/3,0,2}$. The sharp G\aa rding's inequality gives
\begin{equation*}
\|E_{-+}(z)u\|_{L^2}\geqslant C\|G_{-+}(z)u\|_{L^2}\geqslant Ch^{1/3}\|u\|_{L^2},\;\;\; \forall u\in C^\infty(\partial\mathcal{O};\mathbb{C}^N).
\end{equation*}

Therefore $E_{-+}(z)$ is invertible for $0<h\leqslant h_0$. Therefore when $\Re\lambda\geqslant C=h_0^{-1}$, it cannot be a resonance.

\subsection{Weyl's Law}
In this part, we sketch the proof of Theorem \ref{thm:weyl}. See \cite[Section 9-10]{SZ6} for details of the proof.

Heuristically, we want to use the symbol of $E_{w,-+}(z)$ to compute its trace, then use \eqref{eq:mult} to count the number of resonances. However, this operator is not in the trace class. The first step is to construct a finite-rank approximation $\tilde{E}_{w,-+}(z)\in\Psi_{\Sigma_w,2/3}^{0,1,2}
(\partial\mathcal{O};\mathcal{L}(\mathbb{C}^N,\mathbb{C}^N))$ which is invertible and such that
\begin{equation*}
\tilde{E}_{w,-+}(z)^{-1},\;\;(\Lambda_w^{-1}\tilde{E}_{w,-+}(z))^{-1},
\;\;\tilde{E}_{w,-+}(z)^{-1}E_{w,-+}(z)=O(1):
L^2(\partial\mathcal{O};\mathbb{C}^N)\to L^2(\partial\mathcal{O};\mathbb{C}^N)
\end{equation*}
where $\Lambda_w=\langle h^{-2/3}(-h^2\Delta_{\partial\mathcal{O}}-w)\rangle
\in\Psi_{\Sigma_w,2/3}^{0,1,2}$ is elliptic. Moreover, we have $E_{w,-+}(z)-\tilde{E}_{w,-+}(z)$ is independent of $z$ and of rank $M=O(Lh^{1-n+2/3})$. Microlocally $\tilde{E}$ is only different from $E$ on the the glancing region where $E$ is not invertible.

From this finite-rank approximation, we can solve another Grushin problem to reduce $E_{w,-+}$ to a finite matrix. More precisely, we consider
\begin{equation}
\label{2ndgru}
\mathcal{Q}_w(z)=\left(
                 \begin{array}{cc}
                   \Lambda^{-1}E_{w,-+}(z) & R_{w,-}(z) \\
                   R_{w,+}(z) & 0 \\
                 \end{array}
               \right): L^2(\partial\mathcal{O};\mathbb{C}^N)\times\mathbb{C}^M
               \to L^2(\partial\mathcal{O};\mathbb{C}^N)\times\mathbb{C}^M,
\end{equation}
with bounded inverse
\begin{equation*}
\mathcal{F}_w(z)=\left(
                 \begin{array}{cc}
                   F_w(z)\Lambda & F_{w,+}(z) \\
                   F_{w,-}(z) & F_{w,-+}(z) \\
                 \end{array}
               \right): L^2(\partial\mathcal{O};\mathbb{C}^N)\times\mathbb{C}^M
               \to L^2(\partial\mathcal{O};\mathbb{C}^N)\times\mathbb{C}^M.
\end{equation*}
The construction of the Grushin problem is as follows: Let $e_1,\ldots,e_M$ be an orthonormal basis of the image of $\Lambda_w^{-1}(E_{w,-+}(z)-\tilde{E}_{w,-+}(z))^\ast$, then we set
\begin{equation*}
R_{w,+}u(j)=\langle u,e_j\rangle,\;\;\;1\leqslant j\leqslant M;\;\;\;
R_{w,-}(z)u_-=\Lambda^{-1}\tilde{E}_{w,-+}(z)R_{w,+}^\ast u_-.
\end{equation*}
The inverse is given by
\begin{equation*}
\begin{split}
F_w(z)=&\;(I-R_{w,+}^\ast R_{w,+})\tilde{E}_{w,-+}(z)^{-1},\\
F_{w,+}(z)=&\;R_{w,+}^\ast-(I-R_{w,+}^\ast R_{w,+})\tilde{E}_{w,-+}(z)^{-1}E_{w,-+}(z)R_{w,+}^\ast,\\
F_{w,-}(z)=&\;R_{w,+}\tilde{E}_{w,-+}(z)^{-1},\\
F_{w,-+}(z)=&\;-R_{w,+}\tilde{E}_{w,-+}(z)^{-1}E_{w,-+}(z)R_{w,+}^\ast.
\end{split}
\end{equation*}

With these preparation, we can prove a local trace formula on the scale 1 in the $z$ variable for every $w$. This is on the scale $h^{2/3}$ for the semiclassical variable $w+h^{2/3}z$ which is the square of the resonances $h^2\lambda^2$. We remark that this is the largest scale that we can work with for each fixed $w$ since the whole microlocal framework is built exactly on such scale.

For the $j_0$-th band of the resonances, we consider a domain
\begin{equation*}
W=\left\{-\frac{1}{2}L<\Re z<\frac{1}{2}L, A_-<-\Im z<A_+\right\}
\end{equation*}
where
\begin{equation*}
2K\zeta_{j_0-1}'<A_-<2\kappa\zeta_{j_0}'\leqslant
2K\zeta_{j_0}'<A_+<2\kappa\zeta_{j_0+1}'.
\end{equation*}
Let $\partial W=\gamma=\gamma_1\cup\gamma_2\cup\gamma_3\cup\gamma_4$ be the boundary of $W$, where $\gamma_1$ and $\gamma_3$ are the horizontal segments while $\gamma_2$ and $\gamma_4$ are the vertical segments. If we write $\Res_w(h)=\{z:m_\mathcal{O}(h^{-2}(w+h^{2/3}z))>0\}$, then we have the local trace formula
\begin{equation}
\label{eq:localtr}
\begin{split}
\sum_{z\in\Res_w(h)\cap W}f(z)=&\;\sum_{j=1,3}\tr\frac{1}{2\pi i}
\int_{\gamma_j}f(z)\left[E_{w,-+}(z)^{-1}\frac{d}{dz}E_{w,-+}(z)\right.\\
&\;\;\;\left.-\tilde{E}_{w,-+}(z)^{-1}
\frac{d}{dz}\tilde{E}_{w,-+}(z)\right]dz
+O(Lh^{1-n+2/3})\\
\end{split}
\end{equation}
for any holomorphic function $f$ defined near $W$ such that $|f(z)|\leqslant1$ near $\gamma_2\cup\gamma_4$. (In fact, to make this argument work, we need to choose a slightly larger rectangular contour around $W$ and $f$ holomorphic in an even larger domain. Also we need to the contour does not pass through any of the poles of $E_{w,-+}^{-1}$. These technical issues are handled in \cite{SZ6}.)

The main idea to prove this local trace formula is to change the trace of $E_{-+}^{-1}E_{-+}'-\tilde{E}_{-+}^{-1}\tilde{E}_{-+}'$ to the trace of $F_{-+}^{-1}F_{-+}'=\log\det F_{-+}$ by using the Grushin problem \eqref{2ndgru} constructed above. We observe that $F_{-+}$ is an $M\times M$ matrix which is $O(1):\mathbb{C}^M\to\mathbb{C}^M$ under the standard norm. This shows that $\log\det F_{-+}=O(M)=O(Lh^{1-n+2/3})$ and thus all the contributions from the two vertical segment can be controlled by $O(Lh^{1-n+2/3})$ using lower modulus theorem. Notice that this characterization of resonances by the poles of $F_{-+}^{-1}$ also gives a local upper bound on the number of the resonances
\begin{equation}
\label{eq:locupper}
\sum_{|\Re\zeta-1|\leqslant Ch^{2/3},0<-\Im\zeta<Ch^{2/3}}m_\mathcal{O}(\zeta)=O(h^{1-n+2/3}).
\end{equation}

In the local trace formula \eqref{eq:localtr}, we use the (second microlocalization) symbol to compute the trace on the right-hand side and get
\begin{equation}
\label{eq:loccount}
\begin{split}
\sum_{z\in\Res_w(h)\cap W}f(z)=&\frac{h^{1-n+2/3}}{(2\pi)^{n-1}}
\int_{\Sigma_w\times\mathbb{R}}f(\lambda+e^{-2\pi i/3}\zeta_{j_0}'(q))
1_{I(q)}(s)L_{\Sigma_w}(dq)ds\\
&\;\;\;+O(Lh^{1-n+2/3})+O_{f,L}(h^{2-n})\\
\end{split}
\end{equation}
where $(q,s)\in\Sigma_w\times\mathbb{R}$ is a local coordinates for a neighborhood of $\Sigma_w\in T^\ast\partial\mathcal{O}$ such that $s|_{\Sigma_w}=0$, $L_{\Sigma_w}(dq)ds$ is the Liouville measure on $T^\ast X$, and
\begin{equation*}
I(q)=\{s\in\mathbb{R}:s+e^{-2\pi i/3}\zeta_{j_0}'(q)\in W\}.
\end{equation*}

For fixed $L$ (and say $f=1$), this does not give a better description of resonances than the upper bound \eqref{eq:locupper}. However, if we make $L$ large (which does not change the principal symbol in our construction, but may potentially affect the lower order terms), and choose $f$ suitably, we can get a better estimate than \eqref{eq:locupper}. The idea is to let $f$ to be very large in $W$ away from the $\gamma_2\cup\gamma_4$ but remain bounded ($|f|\leqslant1$ as required from the assumption in \eqref{eq:localtr}) near $\gamma_2\cup\gamma_4$. A standard choice is the Gaussian functions
\begin{equation*}
f_\epsilon(z)=((1+O(\epsilon L))e^{-\epsilon L^2/2})^{-1}e^{-\epsilon
(z-z_0)^2},\;\; z_0=-\frac{1}{2}i(A_-+A_+),\;\; \epsilon L\ll1,\; \epsilon L^2\gg\log\frac{1}{\epsilon}.
\end{equation*}

Then from \eqref{eq:loccount} we obtain
\begin{equation*}
\sum_{z\in\Res_w(h)\cap W}\sqrt{\frac{\epsilon}{2\pi}}e^{-\epsilon(\Re(z-z_0))^2/2}
=(1+O(\epsilon L))\frac{h^{1-n+2/3}}{(2\pi)^{n-1}}\int_{\Sigma_w}L_{\Sigma_w}(dq)+
O_{\epsilon,L}(h^{2-n}).
\end{equation*}

Finally, we let $L=\epsilon^{-2/3}$ and integrate in $w$ to get the Weyl's law in the semiclassical setting
\begin{prop}(see \cite[Proposition 10.1]{SZ6})
For $0<a<b$, let
\begin{equation*}
N_h([a,b];j)=\sum_{a<\Re z<b,2\kappa\zeta_j'h^{2/3}<-\Im z<2K\zeta_j'h^{2/3}}m_\mathcal{O}(h^{-2}z).
\end{equation*}
Then under the assumption of \ref{thm:main1}, we have
\begin{equation}
\label{semi}
N_h([a,b];j)=(1+O(\epsilon))\frac{h^{1-n}}{(2\pi)^{n-1}}
\int_{a\leqslant|\xi'|_{x'}^2\leqslant b}dx'd\xi'+O_\epsilon(h^{1-n+1/3})
\end{equation}
for any $1\leqslant j\leqslant j_0$ and $\epsilon>0$.
\end{prop}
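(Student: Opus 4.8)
The statement is the semiclassical Weyl law of \cite[Proposition 10.1]{SZ6}, and the plan is to obtain it by integrating the local trace formula \eqref{eq:localtr} over the spectral window $w\in[a,b]$, with a carefully chosen family of holomorphic test functions $f=f_\epsilon$. The only new input beyond \cite{SZ6} is Theorem \ref{thm:e+-}, which supplies the effective Hamiltonian $E_{w,-+}(z)\in\Psi_{\Sigma_w,2/3}^{0,1,2}$, its principal symbol near the glancing hypersurface, and the local upper bound \eqref{eq:locupper}; once these are available the argument is \emph{mutatis mutandis} the one in \cite[\S9--10]{SZ6}.

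\textbf{Step 1: the local count.} Fix the band index $j$ (written $j_0$ above). For each $w\in[a,b]$, apply \eqref{eq:localtr} on the rectangle $W=W(w)$ and evaluate the right-hand side using the second microlocal principal symbol $E_{w,-+}^0(z;p,h)$, which by Theorem \ref{thm:e+-}(iii) equals $\diag\!\bigl(z-\lambda-e^{-2\pi i/3}\zeta_k'(p)\bigr)$ near $\Sigma_w$, with $\lambda=h^{-2/3}(R(p)-w)$. Integrating the trace of $E_{-+}^{-1}E_{-+}'$ against $f$ over the horizontal edges $\gamma_1\cup\gamma_3$ and computing the resulting symbolic trace as in \cite{SZ6} gives \eqref{eq:loccount}: the $f$-weighted local count equals the phase-space integral over $\Sigma_w\times\mathbb{R}$, plus $O(Lh^{1-n+2/3})$ coming from the finite-rank correction $\tilde E_{w,-+}$ (of rank $M=O(Lh^{1-n+2/3})$, whose contribution is $\log\det F_{w,-+}=O(M)$ since $F_{w,-+}$ is an $M\times M$ matrix with $\|F_{w,-+}\|=O(1)$, the two vertical edges being controlled by the lower-modulus/Cartan estimates), plus $O_{f,L}(h^{2-n})$ from the lower-order terms in the symbol expansion.

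\textbf{Step 2: the Gaussian test function and the $w$-integration.} Take $f=f_\epsilon$ as above, so that $|f_\epsilon|\leqslant1$ on $\gamma_2\cup\gamma_4$ while inside $W$ it is $\asymp\sqrt{\epsilon/2\pi}\,e^{-\epsilon(\Re(z-z_0))^2/2}$, and substitute into \eqref{eq:loccount} to get, for each $w$,
\begin{equation*}
\sum_{z\in\Res_w(h)\cap W}\sqrt{\tfrac{\epsilon}{2\pi}}\,e^{-\epsilon(\Re(z-z_0))^2/2}
=(1+O(\epsilon L))\frac{h^{1-n+2/3}}{(2\pi)^{n-1}}\int_{\Sigma_w}L_{\Sigma_w}(dq)+O_{\epsilon,L}(h^{2-n}).
\end{equation*}
Tiling $[a,b]$ by windows of length $\asymp h^{2/3}$ centered at successive values of $w$ — equivalently, integrating in $w$ — and using the coarea identity $\int_a^b\bigl(\int_{\Sigma_w}L_{\Sigma_w}\bigr)\,dw=\int_{a\leqslant R(x',\xi')\leqslant b}dx'\,d\xi'$ reconstitutes the full $n$-dimensional phase-space volume $\int_{a\leqslant|\xi'|_{x'}^2\leqslant b}dx'\,d\xi'$ from the boundary Liouville measures of the level sets $\Sigma_w$. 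Finally one passes from the Gaussian-weighted count to the sharp count $N_h([a,b];j)$: on the bulk of $[a,b]$ the weight is comparable to a constant, and the discrepancy (the Gaussian tails, together with the short vertical edges of the individual windows) is bounded window-by-window by the local upper bound \eqref{eq:locupper}, which is $O(h^{1-n+2/3})$ per window. Balancing the two competing errors $O(\epsilon L)$ and $O_{\epsilon,L}(h^{2-n}/h^{1-n})=O_\epsilon(h^{1/3})$ by the choice $L=\epsilon^{-2/3}$ and collecting the main term yields \eqref{semi}.

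\textbf{Main obstacle.} The delicate point — precisely the one that is only sketched here and for which one defers to \cite[\S9--10]{SZ6} — is the uniformity of the local trace formula as $L\to\infty$: enlarging $L$ leaves the principal symbol of $E_{w,-+}$ untouched but affects both the rank $M$ of the finite-rank correction and the $L$-dependence of the constant in $O_{f,L}(h^{2-n})$, so one must track these dependences, choose the Gaussian $f_\epsilon$ and a contour perturbed slightly off $\partial W$ to avoid the poles of $E_{w,-+}^{-1}$, and carry out the reduction to the $M\times M$ matrix $F_{w,-+}$ together with the lower-modulus estimates on the vertical segments. All of this is done exactly as in \cite{SZ6}, the only adaptation being the use of Theorem \ref{thm:e+-} in place of its Dirichlet analogue.
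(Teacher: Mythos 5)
Your proposal follows essentially the same route as the paper's own sketch: the local trace formula \eqref{eq:localtr} obtained via the finite-rank approximation $\tilde E_{w,-+}$ and the auxiliary Grushin problem \eqref{2ndgru}, the symbolic evaluation giving \eqref{eq:loccount}, the Gaussian test functions $f_\epsilon$, the $w$-integration via the coarea identity, and the choice $L=\epsilon^{-2/3}$ — all deferring to \cite[\S9--10]{SZ6} for the uniform-in-$L$ bookkeeping, exactly as the paper does. The only slightly loose point is the phrase about ``balancing'' the errors: with $L=\epsilon^{-2/3}$ the relative error in the main term is $O(\epsilon L)=O(\epsilon^{1/3})$, which is then simply renamed $\epsilon$; there is no genuine optimization, just a renaming of the small parameter, consistent with the statement of \eqref{semi}.
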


Now the Weyl law \eqref{weyl} follows from a dyadic decomposition of the interval $|\lambda|\leqslant r$ and applying \eqref{semi} for each dyadic piece of the interval.

\end{document}